\documentclass[reqno, oneside]{amsart}
\usepackage[foot]{amsaddr}

\usepackage{etoolbox}
\patchcmd{\section}{\normalfont}{\normalfont\Large}{}{}

\usepackage{amsmath,amssymb,amsthm}
\usepackage{mathtools}
\usepackage{amsfonts}
\usepackage{stmaryrd}
\usepackage{physics}
\usepackage[T1]{fontenc}
\usepackage[utf8]{inputenc}
\usepackage[english]{babel}

\usepackage[light]{kpfonts}
\usepackage{microtype}
\usepackage{bm}
\usepackage[autostyle]{csquotes}
\usepackage{graphicx} 

\usepackage{tikz}   
\usepackage{tikz-cd}  

\usepackage{enumitem}
\usepackage[bookmarksnumbered,linktocpage,hypertexnames=false,colorlinks=true,linkcolor=blue,urlcolor=black,citecolor=purple,anchorcolor=green,breaklinks=true]{hyperref}
\usepackage[doi=false, url=false, isbn=false, giveninits=true,date=year,style=alphabetic,clearlang=true,maxbibnames=99,backend=biber]{biblatex}
\renewbibmacro{in:}{}
\DeclareFieldFormat{postnote}{#1}
\DeclareFieldFormat{multipostnote}{#1}
\DeclareFieldFormat{pages}{#1}

\AtEveryBibitem{\clearfield{eprintclass}}
\AtEveryBibitem{\clearfield{url}}

\usepackage{tabularx}
\usepackage{cleveref}
\usepackage{xcolor}
\usepackage{multicol}
\usepackage{centernot}

\newtheorem{theoremintro}{Theorem}

\newtheorem{theorem}{Theorem}[subsection]
\newtheorem{lemma}[theorem]{Lemma}
\newtheorem{proposition}[theorem]{Proposition}
\newtheorem{corollary}[theorem]{Corollary}
\newtheorem{assumption}[theorem]{Assumption}

\theoremstyle{definition}
\newtheorem{definition}[theorem]{Definition}
\newtheorem{example}[theorem]{Example}
\newtheorem{remark}[theorem]{Remark}

\newcommand{\FF}{\mathbb F}

\def\NN{\mathbb{N}}
\def\RR{\mathbb{R}}
\def\QQ{\mathbb{Q}}
\def\ZZ{\mathbb{Z}}
\def\KK{\mathbb{K}}

\def\MM{\mathbb{M}}

\def\LL{\mathbb{L}}

\renewcommand{\epsilon}{\varepsilon}
\newcommand{\p}{\mathfrak p}
\newcommand{\m}{\mathfrak m}
\newcommand{\fF}{\mathfrak F}
\newcommand{\fK}{\mathfrak K}
\newcommand{\fM}{\mathfrak M}
\newcommand{\fH}{\mathfrak H}
\newcommand{\fR}{\mathfrak R}

\newcommand{\cC}{\mathcal{C}}

\newcommand{\cZ}{\mathcal{Z}}

\newcommand{\cH}{\mathcal{H}}
\newcommand{\cR}{\mathcal{R}}
\newcommand{\cO}{\mathcal{O}}

\newcommand{\cL}{\mathcal{L}}

\newcommand{\RV}{\mathrm{RV}}

\DeclareMathOperator{\definable}{def}
\DeclareMathOperator{\Def}{Def}
\DeclareMathOperator{\res}{res}
\DeclareMathOperator{\ac}{ac}
\DeclareMathOperator{\rv}{rv}
\DeclareMathOperator{\Trop}{Trop}

\DeclareMathOperator{\trop}{trop}
\DeclareMathOperator{\ftrop}{ftrop}
\DeclareMathOperator{\initial}{\mathrm{in}}

\DeclareMathOperator{\val}{val}
\DeclareMathOperator{\sval}{sval}

\DeclareMathOperator{\pos}{\mathcal{P}}

\DeclareMathOperator{\sign}{sign}
\DeclareMathOperator{\Spec}{Spec}
\DeclareMathOperator{\Sper}{Sper}
\DeclareMathOperator{\quot}{Quot}

\DeclareMathOperator{\closure}{cl}

\DeclareMathOperator{\supp}{supp}

\DeclareMathOperator{\sper}{Sper}

\setlist[enumerate,1]{label={(\roman*)},ref={\thetheorem (\roman*)}}
\def \arch{\mathrm{arch}}

\newcommand{\cl}[1]{\closure \left( {#1} \right)}

\DeclarePairedDelimiterX{\BW}[2]{\langle}{\rangle_{BW}}{#1, #2}

\usepackage{lineno}
 \usepackage[normalem]{ulem}

\usepackage[textsize=footnotesize, 
  linecolor=gray, bordercolor=gray, backgroundcolor=gray!25, 
  textcolor=black]{todonotes}

\usepackage{comment,soul,fullpage}

\makeatletter 
\def\arrowfill@#1#2#3#4{%
  $\m@th\thickmuskip0mu\medmuskip\thickmuskip\thinmuskip\thickmuskip
   \relax#4#1\mkern-4mu%
   \cleaders\hbox{$#4\mkern-2mu#2\mkern-2mu$}\hfill
   \mkern-4mu#3$%
}
\makeatother

\makeatletter
\DeclareFontEncoding{LS2}{}{\noaccents@}
\makeatother
\DeclareFontSubstitution{LS2}{libertinust1mathex}{m}{n}
\DeclareSymbolFont{libsymbols}{LS2}{libertinust1mathsym}{m}{n}
\DeclareMathSymbol{\lhook}{\mathrel}{libsymbols}{"41}
\DeclareMathSymbol{\rhook}{\mathrel}{libsymbols}{"42}

\title%
{A note on o-minimal tropicalizations
}

\author{Lorenzo~Baldi$^\dagger$}
\address{$\dagger$ MPI of Molecular Cell Biology and Genetics \& Center for Systems Biology, Dresden}
\author{Máté~L.~Telek$^\ddagger$}
\address{$\ddagger$ Budapest University of Technology and Economics}
\date{June 2026}
\subjclass{14T10, 14P10, 03C64}

\begin{document}

\begin{abstract}
In this note, we provide several equivalent descriptions of the tropicalization of definable sets in a polynomially bounded o-minimal expansion of a real closed field. We show that the tropicalization can be described in multiple ways: as the image under the valuation map, as the image under the tropicalization map of the archimedean points in the o-minimal spectrum, in terms of initial degenerations of the definable set and initial nonnegativity cones, or via the real tropicalization of polynomials belonging to the corresponding nonnegativity cone. Moreover, we prove analogous results for the fine tropicalization of definable sets, where the valuation map is replaced by the RV sort map, which records not only the valuation of a point but also its angular component.
\end{abstract}

\maketitle

\setcounter{tocdepth}{1}

\vspace{-0.8cm}
\section{Introduction}

The origins of tropical geometry can be traced back to the work of Bergman \cite{bergmanLogarithmicLimitSetAlgebraic1971} and Viro \cite{viroRealAlgebraicVarieties1983}. Since their pioneering work, the theory has developed in several directions and has found applications in both pure and applied mathematics. Tropical geometry studies algebraic varieties (or more generally, definable sets) via their degenerations, often referred to as their \emph{polyhedral shadows}. These polyhedral shadows are typically easier to work with than their algebraic counterparts, while still retaining significant information about the original objects. For example, they have been used to compute Gromov–Witten invariants \cite{mikhalkinEnumerativeTropicalAlgebraic2005}, study toric compactifications of algebraic varieties \cite{tevelevCompactificationsSubvarietiesTori2007b}, and construct real algebraic varieties with prescribed topology \cite{viroSixteenthHilbertProblem2008,brugalleCombinatorialPatchworkingBack2024,Rau2025}. On the applied side, tropical geometry has been used to determine the generic number of solutions of parametrized polynomial systems \cite{helminckGenericRootCounts2025,feliuRootBoundsVertical2026}, solve such systems via homotopy continuation methods \cite{huberPolyhedralMethodSolving1995,leykinPolyhedralHomotopies2019, helminckTropicalMethodSolving2024}, and compute Feynman integrals \cite{borinskyTropicalFeynmanIntegration2023}.

One of the ways to define the tropicalization $\Trop(X)$ of a set $X\subset(\FF \setminus \{0\})^n$, where $\FF$ is a valued field, is as the closure, in the euclidean topology of $\RR^n$, of the image of $X$ under the coordinate-wise valuation map. 
While this definition is straightforward, it is convenient to have other equivalent descriptions of the points in $\Trop(X)$, that are often better suited for understanding how properties of $X$ translate to properties of $\Trop(X)$, and vice versa. When $\FF$ is algebraically closed and $X$ is an algebraic set, the Fundamental Theorem of Tropical Algebraic Geometry gives these equivalent descriptions for $\Trop(X)$, which can be characterized in terms of the initial degenerations of $X$, of the Berkovich analytification of $X$, or of the tropicalizations of all polynomials vanishing on $X$ (see \cite[Th.~3.2.3]{MaclaganSturmfels}, \cite[Th.~4.2]{Draisma}, and \cite{EinsiedlerKapranovLind}).
These results rely on the algebraically closedness of the ground field, but can be adapted to give interesting information also for algebraic sets defined over a real closed field, see e.g. \cite{speyerTropicalTotallyPositive2005,brandenburgTropicalPositivityDeterminantal2023}.

However, this approach is not sufficient if one wants to take into account the \emph{order} of the real closed field, which makes possible to define \emph{semialgebraic} sets (or more generally, definable sets in an o-minimal structure) in addition to algebraic sets. This necessity led to the works \cite{develinTropicalPolytopesCellular2007} (for the special case of polyhedra) and \cite{Alessandrini}, where the tropicalization of semialgebraic sets is systematically studied for the first time.
Since then, this topic has attracted an increasing attention, see e.g. \cite{Allamigeon2020,Telek,blekhermanMomentsSumsSquares2025}. In particular, in their seminal paper, Jell, Scheiderer, and Yu established an analogue of the Fundamental Theorem in the semialgebraic setting \cite[Th.~6.9]{jellRealTropicalizationAnalytification2020}.

In this note, we show that this theorem can be generalized to definable sets in polynomially bounded o-minimal structures. While retaining the tameness properties of semialgebraic sets, definable sets in polynomially bounded o-minimal structures are considerably more general, as they include for instance globally subanalytic sets, see e.g. \cite{driesGeometricCategoriesOminimal1996b}, and allows the use of \emph{real} exponents for monomial functions, with potential applications e.g. to the study of irrational toric varieties \cite{postinghelDegenerationsRealIrrational2015}.

Furthermore, in our fundamental theorem we prove a characterization of the tropicalization of a definable set in terms of \emph{real} initial degenerations (see \Cref{rem:initial} for a comparison with the classical ones) , which is new already in the semialgebraic case. After the following theorem, which is our main result, we discuss the objects which appear in the statement, and compare the results obtained with the existing literature. 
\begin{theoremintro}[Fundamental Theorem of Tropical O-minimal Geometry, see Theorem  \ref{thm:fundamental_theorem}]
\label{Thm:A}
    Let $\fK \prec \fF \prec \fM$ be elementary extensions of polynomially bounded o-minimal expansions of the real closed fields $\KK \subset \FF \subset \MM$. Furthermore, assume that $\KK \subset \FF \subset \MM$ is an extension of valued real closed fields such that the valuation on $\FF$ is nontrivial and $\val(\MM^\times) = \RR$. For any definable set $X \subset \KK_{>0}^n$, the following subsets of $\RR^n$ are equal.
    \begin{enumerate}
    \item $\Trop(X)$;
       \item $\trop ( \widetilde{X_\fF}^\arch)$;
          \item  $\overline{\{ \, w \in \Gamma^n_\mathbb{F} \mid \initial_w(X_\fF) \neq \emptyset \, \}}$;
        \item $\val(X_\fM)$;
        \item $\bigcap_{f \in \mathcal{P}(X)} \{ \, w \in \RR^n \mid \trop_{r}(f)(1,w) \geq 0 \, \}$.
    \end{enumerate}
    Moreover, the intersection in (v) can be taken to be finite.
\end{theoremintro}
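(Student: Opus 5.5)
The plan is to prove the chain of equalities (i)$=$(iv), then (iv)$=$(ii), then (ii)$=$(iii), and finally (i)$=$(v). Throughout we use the standard model-theoretic tools for polynomially bounded o-minimal structures: elementary extensions preserve first-order properties; $T$-convex valuation rings (van den Dries--Lewenberg) are compatible with definable functions; and the valuation of a definable set is a finite union of $\Gamma$-rational polyhedra.

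\textbf{(i)$=$(iv).} The first step is to show that $\val(X_\fF)\subset\Gamma_\FF^n$ is a finite union of polyhedra defined over the value group, with slopes in the field of exponents. Since the structure is polynomially bounded, this follows from the tame pair / $T$-convexity results: by cell decomposition, $\val$ of a definable set is a $\Gamma$-definable set in the ordered vector space over the field of exponents. Because $\val(\FF^\times)$ is dense in $\RR$ (nontrivial and divisible), the closure of $\val(X_\fF)$ in $\RR^n$ is the real polyhedral complex with the same inequalities. Since $\val(\MM^\times)=\RR$, elementarity gives that $\val(X_\fM)$ is exactly the set of real solutions of these inequalities, and this set is closed. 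It remains to check that $\Trop(X)$ is independent of the field used; this is again elementarity plus density.

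\textbf{(iv)$=$(ii) and (ii)$=$(iii).} Points of $\widetilde{X_\fF}^\arch$ correspond to types over $\fF$ realized in an archimedean extension. Each such type can be realized in $\fM$, after embedding the value group of the realization into $\RR=\val(\MM^\times)$; the trop map is then the valuation of the realization, which gives $\supseteq$. Conversely, a point $x\in X_\fM$ with $\val(x)=w$ determines its type over $\fF$, which is archimedean because $\val(\MM^\times)\subset\RR$. For (iii), given $w\in\Gamma_\FF^n$, we choose $t\in\FF^n$ with $\val(t)=w$. Then $\initial_w(X_\fF)\neq\emptyset$ holds exactly when the rescaled set $t^{-1}X_\fF$ meets the units, i.e.\ when $w\in\val(X_\fF)$. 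Taking closures yields (i).

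\textbf{(i)$=$(v).} For $\subseteq$, if $x\in X_\fM$ and $f\in\pos(X)$ is nonnegative on $X$, then the leading terms of $f(x)$ cannot cancel to a negative dominant term, which gives $\trop_r(f)(1,\val x)\ge0$. The reverse inclusion is the main obstacle. We would use the polyhedral description of $\Trop(X)$ from the first step: each closed halfspace appearing in a finite description of the complement must be cut out by a single nonnegative definable function (or polynomial in the relevant monomials with real exponents) whose real tropicalization realizes exactly that halfspace on $X$. To do this, write $\Trop(X)$ as a finite intersection of finite unions of halfspaces. For each union, produce a product or sum of signed monomials $\pm x^{\alpha}$ with coefficients chosen so that positivity on $X$ follows from the valuation inequalities, using a Positivstellensatz-like separation argument via the definable convex hull. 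This step also gives finiteness, since only finitely many such $f$ are needed. If the sum construction fails because of cancellation, I would fall back on the characterization from (iii): for $w\notin\Trop(X)$, some monomial-weighted function strictly dominates on a neighborhood of $w$, and compactness of the complement's cell decomposition then yields finitely many such functions.
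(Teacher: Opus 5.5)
Your handling of (iii), of $\Trop(X)\subset$ (v), and of $\val(X_\fM)\subset\trop(\widetilde{X_\fF}^\arch)$ via the type of $x\in X_\fM$ over $\FF$ is sound. The last one is more direct than the paper, which obtains it through properness of $\trop$. Two steps, however, have genuine gaps.

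\textbf{The inclusion (v)$\subset$(i), and with it the finiteness claim, is not proved.}
\begin{itemize}
\item Nonnegativity on $X$ cannot follow from valuation inequalities alone. At points of $X_\fF$ where the positive and negative parts of $f$ have equal valuation, the sign of $f$ is decided by the leading coefficients (cf.\ \Cref{Ex:SmoothConditions}). For example, take $X=[1,\infty[\,\subset\KK_{>0}$: all $x-\varepsilon$ with $\varepsilon\in\KK_{>0}$ and $\val(\varepsilon)=0$ have the same real tropicalization, but only those with $\varepsilon\le 1$ lie in $\pos(X)$. So the coefficients must be adapted to $X$. Neither a ``Positivstellensatz-like separation'' nor the local-domination/compactness fallback produces an element of $\pos(X)$ with prescribed $\trop_r$.
\item The missing idea is the paper's lifting argument (\Cref{Lemma:OpenNegTropPoly,Lemma:LiftingPolynomialInequalities}). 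An open piece $P=\{w\mid\lambda_j\cdot w+v_j<0,\ j\in[m]\}$ of the complement equals $\{w\mid F(1,w)<0\}$ for $F=(-1,0)\oplus\bigoplus_j(1,-v_j)\odot\overline{w}^{-\lambda_j}$, which has a single negative term. Pick $a_j\in\KK_{>0}$ with $\val(a_j)=-v_j$ and set $\varphi=\sum_j a_jx^{-\lambda_j}$. Since $P\cap\val(X_\fF)=\emptyset$, one gets $\val\varphi\le 0$ on $X_\fF$. Then $f=\varphi-\varepsilon$ lies in $\pos(X)$ with $\trop_r(f)=F$, where $\varepsilon=\inf_X\varphi$ if $\val\varphi$ vanishes somewhere on $X_\fF$ (one checks $\val(\varepsilon)=0$), and $\varepsilon$ is any element of $\KK_{>0}$ of valuation $0$ otherwise.
\item This construction needs $v_j\in\Gamma_\KK$, since $\pos(X)\subset\KK[x^\Lambda]$. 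Using coefficients in $\FF$ would only give $\Trop(X)\supset\bigcap_{f\in\pos(X_\fF)}$, which is the wrong direction because $\pos(X)\subset\pos(X_\fF)$. Your first step, done in $(\FF,\cO_\FF)$, only yields constants in $\Gamma_\FF$. When $\KK$ is trivially valued you need the separate fact that $\Trop(X)$ is a finite union of $\Gamma_\KK$-affine $\Lambda$-polyhedra (\Cref{Prop:TropPolyhedralStructure}, via cell decomposition and preparation over $\KK$).
\end{itemize}

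\textbf{The inclusion $\trop(\widetilde{X_\fF}^\arch)\subset\val(X_\fM)$ rests on a false claim.}
\begin{itemize}
\item $\fM$ is an arbitrary elementary extension with $\val(\MM^\times)=\RR$, not a saturated one, so it need not realize any non-principal type over $\FF$. If $\Gamma_\FF=\RR$ you may take $\fM=\fF$. For $X=\KK_{>0}$, the free cut of $\FF_{>0}$ separating the elements of positive valuation from those of valuation $\le 0$ is then a point of $\widetilde{X_\fF}^\arch$ realized nowhere in $\MM$.
\item What is true, and all you need, is that the valuation vector $\trop(\alpha)$ is attained on $X_\fM$. This is exactly the strong density theorem (\Cref{thm:strong_density}), i.e.\ model completeness of $T$-convex pairs. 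You can apply it after lifting $\alpha$ to some $\beta\in\widetilde{X_\fM}^\arch$ as in \Cref{lem:surjectivity_field_extension}. This is legitimate because $\Gamma_\MM\subset\RR$ and $\Gamma_\FF\neq 0$ make $\MM$ relatively archimedean over $\FF$. Alternatively, use $(\FF,\cO_\FF)\prec(\FF(\alpha),\cO_\alpha)$ together with your semilinear description of the valuation image.
\item The same ingredient is what ``elementarity'' must mean in your step (i)$=$(iv). The relation $\fF\prec\fM$ in the o-minimal language says nothing about $\val$. Transferring the semilinear description of $\val(X_\fF)$ to $\MM$ requires $(\FF,\cO_\FF)\prec(\MM,\cO_\MM)$, which is the van den Dries--Lewenberg result.
\end{itemize}
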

As observed e.g. in \cite[Sec.~6]{jellRealTropicalizationAnalytification2020}, one must consider the intersection of a set $X$ with each orthant in $\KK^n$ separately, in order to study its full tropicalization. The gluing of the tropicalizations of the intersections of $X$ with different orthants leads to the definition of \emph{real phase structures} in the algebraic case, see e.g. \cite{Rau2025}. To simplify our notation, we restrict our attention to the positive orthant $\KK_{>0}^n$ only.

In (ii), the archimedean points in the o-minimal spectrum $\widetilde{X_{\fF}}$ play the role of the real Berkovich analytification of $X$, cf. \cite[Th.~3.17]{jellRealTropicalizationAnalytification2020} and \Cref{rem:berkovic}. In our work, they are constructed as special points in the o-minimal spectrum of the extension $X_\fF$ of $X$ to $\fF$, see \Cref{Sec:RelArchPoints}. Those points admit a natural valuation extending the valuation of $\FF$, see \Cref{lem:valuation_archimedean}, which is used to define the set in point (ii) in Theorem~\ref{Thm:A}.

Part (iii) characterizes the points of $\Trop(X)$ in terms of real initial degenerations, in analogy to the case of algebraic varieties. The initial degenerations $\initial_w(X_\fF)$ are closed definable subsets in the residue field of $\FF$, see \Cref{prop:ValImageClosedM}. Furthermore, we show that $\initial_w(X_{\fF})$ can be described by the initial forms of polynomials contained in the nonnegativity cone $\mathcal{P}(X)$ of $X$ (Proposition~\ref{Prop:InitialDegViaPreorder}). Thus, in the definable setting, initial nonnegativity cones play a role analogous to that of initial ideals in algebraic geometry. 

Part (v) further strengthens this analogy. There, we consider the real tropicalizations of polynomials that are nonnegative on $X$. These real tropical polynomials have coefficients in the real tropical hyperfield, which record both the sign and the valuation of the coefficients of the original polynomials. The idea of using such real tropicalizations was introduced in \cite{jellRealTropicalizationAnalytification2020}. Indeed, in the special case of semialgebraic sets, the equivalence of conditions (i), (ii), (iv), and (v) was already established in \cite[Th.~6.9]{jellRealTropicalizationAnalytification2020}.

Our techniques for the proof of \Cref{Thm:A} combine the ideas from \cite{jellRealTropicalizationAnalytification2020} with results in model theory and o-minimal geometry. The study in \cite{driesTConvexityTameExtensions1995,driesTConvexityTameExtensions1997} of valued real closed fields is particularly important, together with the o-minimal spectrum, for which we refer to the many references in \Cref{Sec:OminimalSpectrum}. The o-minimal spectrum plays the role of the real spectrum in semialgebraic geometry, see \cite{carralNormalSpectralSpaces1983,tresslRealSpectrumContinuous1997}.

\medskip

Another useful ingredient for the proof of \Cref{Thm:A} (and for understanding tropicalizations of definable sets in general) is to enrich the tropicalization map to retain information beyond the valuation. This idea has already appeared in several forms in the literature. For example, the phase tropicalization map records not only the valuation of an element but also its phase \cite{mikhalkinDecompositionPairsofpantsComplex2004,mikhalkinEnumerativeTropicalAlgebraic2005,viroBasicConceptsTropical2011,kimNaturalTopologicalManifold2021}.
In the real setting, the phase reduces to the sign of an element, thus it does not provide any additional information, since we restrict our attention to the positive orthant.
To obtain a refinement of the tropicalization, we instead follow \cite{maxwellGeometryTropicalExtensions2024} and enrich the valuation by recording the full angular components. We rephrase the ideas of \cite{maxwellGeometryTropicalExtensions2024} to the real closed setting using the RV sort structure in model theory, see e.g. \cite{flennerRelativeDecidabilityDefinability2011}.

We define the \emph{fine tropicalization} of a definable set as its image under the coordinate-wise Residual Value sort map $\rv\colon \FF^\times \to \FF^\times/(1+\mathfrak{m}_\FF)$. Here, $\mathfrak{m}_\FF$ denotes the maximal ideal of the valuation ring of~$\FF$. If we fix a splitting of $\FF$, there is an associated notion of \emph{angular component}, denoted $\ac$, and $\rv \cong \ac \times \val$ (see \Cref{Sec:RealClosedValuedFields}).
In this note, we prove an analogue of the Fundamental Theorem in the fine tropical setting. The equality of (iii) and (iv) below in \Cref{Thm:B} is a real counterpart of \cite[Theorem 5.2]{maxwellGeometryTropicalExtensions2024}. To make the analogy more explicit, we state here the theorem in the case $\FF = \RR((t^\RR))$, the field of Hahn series with real exponents. Moreover, we describe the fine tropicalization in terms of archimedean points of the o-minimal spectrum, and by using the $\RV$-tropicalization of nonnegative polynomials, whose coefficients live in the $\RV$-tropical hyperfield.

\begin{theoremintro}[Fundamental Theorem of Fine Tropical O-minimal Geometry, see \Cref{thm:fine_fundamental}]
\label{Thm:B}
$\qquad \qquad \qquad$ Let $\FF = \RR((t^\RR))$ be the field of Hahn series and let $\fF$ be a polynomially bounded o-minimal expansion of $\FF$.
Then, for any closed definable subset $X \subset \FF_{>0}^n$, the following sets are equal: 
    \begin{enumerate}
        \item $\ftrop ( \widetilde{X_\fF}^\arch)$;
        \item $\bigcap_{f \in \mathcal{P}(X_\fF)}     \{ z \in \RR_{>0}^n \times \RR^n \mid \trop_{\rv}(f)(z) \geq 0\}$;
        \item $\rv(X_\fF)$;
        \item  $\bigcup_{w \in \RR^n} \initial_w(X_\fF) \times \{ w \}$.
    \end{enumerate}
\end{theoremintro}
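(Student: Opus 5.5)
We prove Theorem~\ref{Thm:B} through the cycle of inclusions (iii)$\subseteq$(i)$\subseteq$(ii)$\subseteq$(iii), and then add the identification (iii)$=$(iv). Throughout we use that $\FF=\RR((t^\RR))$ is maximally complete, $\val(\FF^\times)=\RR$, and has residue field $\RR$ with the canonical splitting $t^w$. Hence $\rv\cong\ac\times\val$ identifies $\rv(\FF_{>0})$ with $\RR_{>0}\times\RR$.

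\textbf{(iii)$=$(iv).} By definition, $\initial_w(X_\fF)$ is the image under the residue map of $t^{-w}X_\fF\cap\mathcal O^n$ restricted to points of exact valuation $w$. Equivalently, it is the set of $\ac(x)$ for $x\in X_\fF$ with $\val(x)=w$. With the chosen splitting, this is literally the fibre of $\rv(X_\fF)$ over $w$. So (iii)$=$(iv) is bookkeeping once the definition of $\initial_w$ is unwound.

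\textbf{(iii)$\subseteq$(i).} Each point $x\in X_\fF$ gives a principal type in $\widetilde{X_\fF}$. That type is archimedean over $\FF$, since it is realized in $\FF$ itself. Its $\ftrop$ equals $\rv(x)$, using the compatibility of the valuation on archimedean points from Lemma~\ref{lem:valuation_archimedean}.

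\textbf{(i)$\subseteq$(ii).} Take an archimedean point $\alpha$ and $f\in\mathcal P(X_\fF)$. Then $f\ge0$ at $\alpha$. Since $\alpha$ is archimedean, the valuation on the residue field $\kappa(\alpha)$ extends that of $\FF$ with value group $\RR$ and residue field $\RR$. Write $z=\ftrop(\alpha)$. If $f(\alpha)$ had a strict cancellation pattern forcing $\trop_{\rv}(f)(z)<0$, the leading $\rv$-term of $f(\alpha)$ would be negative. This follows from the ultrametric rule for $\rv$ of a sum: if the dominant $\rv$-terms do not cancel, then $\rv$ of the sum is their hyperfield sum. That would contradict $f(\alpha)\ge 0$. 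This is the usual ``tropicalization of a nonnegative value is nonnegative'' argument in the $\RV$-hyperfield, carried out verbatim.

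\textbf{(ii)$\subseteq$(iii).} This is the main step, and closedness of $X$ enters here. Let $z=(c,w)\notin\rv(X_\fF)$. We must produce $f\in\mathcal P(X_\fF)$ with $\trop_{\rv}(f)(z)<0$. First we show that $\rv(X_\fF)$ is closed in $\RR_{>0}^n\times\RR^n$, using Proposition~\ref{prop:ValImageClosedM} together with the $T$-convexity results of van den Dries–Lewenberg. Since $X$ is closed and $\FF$ is spherically complete, the fibres $\initial_w(X_\fF)$ are closed. Also, $\val(X_\fF)$ is a closed $\Gamma$-polyhedral set, because $\val(\FF^\times)=\RR$ already plays the role of $\fM$ in Theorem~\ref{Thm:A}.

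Given this, $z$ has a product neighbourhood $U\times B$ disjoint from $\rv(X_\fF)$, with $U$ a box around $c$ and $B$ a ball around $w$. Two cases then need a polynomial separating $z$ from $X_\fF$.

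\emph{Case $w\notin\val(X_\fF)$.} We use Theorem~\ref{Thm:A}(v) with its finite intersection: it gives $f\in\mathcal P(X_\fF)$ with $\trop_r(f)(1,w)<0$, and this implies $\trop_{\rv}(f)(z)<0$.

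\emph{Case $w\in\val(X_\fF)$ but $c\notin\initial_w(X_\fF)$.} By Proposition~\ref{Prop:InitialDegViaPreorder}, $\initial_w(X_\fF)$ is cut out by the initial forms of elements of $\mathcal P(X_\fF)$. So there is $f\in\mathcal P(X_\fF)$ whose initial form $\initial_w(f)$ is negative at $c$. Unwinding the hyperfield evaluation, $\trop_{\rv}(f)(c,w)$ is exactly the $\rv$-class of $\initial_w(f)(c)\,t^{\min}$, which is negative.

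The obstacle I expect is exactly this second case. We need Proposition~\ref{Prop:InitialDegViaPreorder} to give \emph{strict} separation of an individual point $c$ outside a closed set. Nonnegativity cones usually only detect the closure, which is why $X$ is assumed closed. The plan is to first use a Positivstellensatz-type argument in the residue o-minimal structure over $\RR$ to separate $c$ from $\initial_w(X_\fF)$, and then lift the resulting polynomial back to $\FF$.
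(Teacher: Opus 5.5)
Your route is the paper's:
\begin{itemize}
\item (iii)$=$(iv) is \Cref{Lemma:AlternativeDescrInitialDeg}.
\item (iii)$\subseteq$(i) comes from principal ultrafilters.
\item Your (i)$\subseteq$(ii) sketch is \Cref{Prop:PolyRVTrop} applied over $\FF(\alpha)$. The paper phrases this as the inclusion of \Cref{Thm:WeakFundamentalFineTrop} over $\fM(\alpha)$, together with $\mathcal{P}(X_\fM)\subseteq\mathcal{P}(X_{\fM(\alpha)})$.
\item (ii)$\subseteq$(iii) rests on \Cref{Prop:InitialDegViaPreorder} and \Cref{Lemma:RVvsInitialForms}.
\end{itemize}

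Three things in your (ii)$\subseteq$(iii) paragraph are wrong or superfluous.
\begin{itemize}
\item The set $\rv(X_\fF)$ is in general \emph{not} closed in $\RR^n_{>0}\times\RR^n$. For $X=[1,\infty[$ one gets $\rv(X)=(\RR_{>0}\times\,]-\infty,0[)\cup([1,\infty[\times\{0\})$, whose closure contains $(\tfrac12,0)$. The neighbourhood $U\times B$ is never used, so simply drop this claim.
\item Case~1 is subsumed by Case~2, since $w\notin\val(X_\fF)$ just means $\initial_w(X_\fF)=\emptyset$. The appeal to \Cref{Thm:A} is therefore unnecessary.
\item Closedness of $\initial_w(X_\fF)$ is \Cref{lem:initial_closed_dimension} (van den Dries--Lewenberg) and holds for every definable $X$. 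Spherical completeness plays no role.
\end{itemize}

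Your worry about Case~2 is justified. The paper derives the equality case of \Cref{Prop:InitialDegViaPreorder} from the backward implication in \eqref{eq:compatibleorders}, and that implication is false: $f=x-1$, $w=0$, $b=1-t$ give $f(b)<0$ but $\initial_0(f)(1)=0$. For non-definable closed sets the equality itself fails. Take $S=\{1+1/k\mid k\ge1\}$, which is closed in $\FF_{>0}$ with $1\notin\initial_0(S)$. For every $f\in\mathcal{P}(S)$, $\initial_0(f)\ge0$ at each $1+1/k$, so $\initial_0(f)(1)\ge0$ by continuity.

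So the real input is your fallback: closedness of $\initial_w(X_\fF)$ in $\RR^n_{>0}$. The lift must be done with care, though. A residue-level separator that vanishes at some residues need not lift into $\mathcal{P}(X_\fF)$, and points of $X_\fF$ with other valuations must also be controlled.

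A ball suffices; no Positivstellensatz is needed.
\begin{itemize}
\item Since $\initial_w(X_\fF)$ is closed, pick $\rho\in\RR$ with $0<\rho<\min_i c_i$ such that the open ball of radius $2\rho$ about $c$ misses $\initial_w(X_\fF)$.
\item Set $f=\sum_i(t^{-w_i}x_i-c_i)^2-\rho^2$.
\item Let $x\in X_\fF$. If $\val(x_j)<w_j$ for some $j$, the $j$-th square is infinitely large, so $f(x)>0$.
\item Otherwise $f(x)\in\cO_\FF$ with $\res f(x)=\sum_i(\res(t^{-w_i}x_i)-c_i)^2-\rho^2$. This is $\ge c_i^2-\rho^2>0$ if some $\res(t^{-w_i}x_i)=0$. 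It is $\ge3\rho^2$ if $\val(x)=w$, because then $\res(t^{-w}.x)=\ac(x)\in\initial_w(X_\fF)$.
\item Hence $f>0$ on $X_\fF$. Meanwhile $W=0$ and $\initial_w(f)(y)=\sum_i(y_i-c_i)^2-\rho^2$, so $\trop_{\rv}(f)(c,w)=\{(-\rho^2,0)\}<0$ by \Cref{Lemma:RVvsInitialForms}.
\end{itemize}
This argument never uses that $X$ is closed, only that it is definable.
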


The paper is organized as follows. In Section~\ref{Sec:Prelim}, we recall some basic notions and set up the notation for valued real closed fields, hyperfields, and model theory. In Section~\ref{Sec:TropDefSets}, we begin our investigation of the tropicalization of definable sets. Section~\ref{Section:TropofDefFunctions} and~\ref{Sec:InitialDegen} are devoted to the tropicalizations and initial forms of polynomials, and to their relationship with initial degenerations. In Section~\ref{Sec:FristVersion}, we firsty prove weak versions of Theorems~\ref{Thm:A}, \ref{Thm:B}, without considering the o-minimal spectrum and the closure in \Cref{Thm:A}, (iii).

In Section~\ref{Sec:Sec4}, we turn our attention to the closure and relate the points in the closure to the archimedean points of the o-minimal spectrum of the definable set. After discussing several characterizations of the o-minimal spectrum in Section~\ref{Sec:OminimalSpectrum}, we show that the residue fields corresponding to archimedean points admit a unique valuation, which we will use to define the tropicalization and $\rv$-tropicalization maps. This is the content of Sections~\ref{Sec:RelArchPoints} and~\ref{Sec:RelArchPointsVal}. In Section~\ref{sec:strong_density}, we prove the Strong Density Theorem (\Cref{thm:strong_density}) and other key technical ingredients for the proofs of Theorems~\ref{Thm:A} and~\ref{Thm:B}, which is concluded in Section~\ref{sec:fundamental_theorems}.

\section{Preliminaries}
\label{Sec:Prelim}

To provide a concise overview of the key mathematical objects considered in this work and their associated notation, we collect them in Table~\ref{Table}. In this section, we elaborate on these objects and recall the necessary background on valued real closed fields, model theory, and hyperfields.

\begin{table}[h]
\centering
\caption{
{\bf Table of notation}}
\begin{tabular}{c|l}
\hline
$\qquad R^\times \qquad $ & \small set of invertible elements of a ring $R$ \\ \hline
$\qquad [n] \qquad $ & \small $\{1,\dots,n\}$ for $n \in \mathbb{N}$ \\ \hline
$\qquad \KK \qquad $ & \small valued real closed field \\ \hline
$\qquad \FF \qquad $ & \small nontrivially valued real closed field \\ \hline
$\qquad \MM \qquad $ & \small nontrivially valued real closed field with value group $\RR$ \\ \hline
$\qquad \Gamma_\FF \qquad $ & \small value group \\ \hline
$\qquad \mathcal{O}_\FF \qquad $ & \small valuation ring  \\ \hline
$\qquad \mathfrak{m}_\FF \qquad $ & \small valuation ideal \\ \hline
$\qquad k_\FF \qquad $ & \small residue field \\ \hline
$\qquad \RV_\FF \qquad $ & \small Residual Value sort hyperfield \\
\hline
$\qquad \val_\FF \colon \FF \to \RR \cup\{\infty\} \qquad $ & \small valuation map \\ \hline
$\qquad\res_\FF\colon \mathcal{O}_\FF \to  k_\FF \qquad $ & \small residue map \\ \hline
$\qquad\ac_\FF\colon \FF^\times \to  k_\FF^\times \qquad $ & \small angular component map \\ \hline
$\qquad \rv_\FF \colon \FF \to \RV_\FF  \qquad $ & \small residual value map \\ \hline
$ \qquad \fK, \fF, \fM \qquad$ & \small o-minimal expansions of a real closed field $\KK, \FF, \MM$ resp. \\ \hline
$\qquad \Lambda = \Lambda(\fK) \qquad$ & \small the field of exponents of $\fK$ \\
\hline
$\qquad R[\vb x^\Lambda] \qquad$ & \small set of polynomials with coefficients in a ring $R$ or a hyperfield $R$ and exponents in $\Lambda$ \\
\hline
$\qquad X_\fF \qquad $& \small $\fF$-extension of a definable set $X$ in $\fK$\\ \hline
$\qquad \widetilde{X} \rule{0pt}{2.4ex} \qquad $& \small o-minimal spectrum of a definable set $X$\\
\hline
\end{tabular}
\label{Table}
\end{table}

\subsection{Real closed fields and valuations}
\label{Sec:RealClosedValuedFields}
A \emph{real field} is a field $\mathbb{K}$ that admits an \emph{ordering} $\leq$, that is, a total order $\leq$ on $\mathbb{K}$ that is compatible with the field operations: for all $a,b,c \in \mathbb{K}$, we have
\begin{align*}
    a \leq b \Rightarrow a + c \leq b + c, \quad \text{ and } \quad a \leq b,\, 0 \leq c \Rightarrow ac \leq bc.
\end{align*}
For an \emph{ordered field} $(\mathbb{K},\leq)$, we write $\mathbb{K}_{>0}$ for the set of elements $a \in \mathbb{K}$ with $a > 0$, and define an \emph{absolute value} as $\abs{a} \coloneqq \max\{a,-a\}$.
A field $\mathbb{K}$ is \emph{real closed} if it admits a unique ordering and every odd degree polynomial with coefficients in $\mathbb{K}$ has a root in $\mathbb{K}$. We refer the reader to \cite[Sec.~2.2]{BasuPollackRoy} or \cite[Sec.~1.2]{Scheiderer2024} for other equivalent characterizations. The inclusion $\mathbb{K} \subset \mathbb{F}$ is an \emph{extension of real closed fields} if $\KK$ and $\FF$ are real closed fields and the ordering on $\mathbb{F}$, when restricted to $\mathbb{K}$, coincides with the ordering on $\mathbb{K}$.

In tropical geometry, we are interested in fields $\mathbb{F}$ equipped with a \emph{valuation map} $\val\colon \mathbb{F} \to \mathbb{R} \cup \{\infty\}$, i.e. a map satisfying the following conditions:
\begin{itemize}
    \item[(1)] $\val(a) = \infty$ if and only if $a = 0$;
    \item[(2)] $\val(ab) = \val(a) + \val(a)$; and
    \item[(3)] $\val(a+b) \geq \min\{ \, \val(a), \val(b)\, \}$ for all $a,b \in \mathbb{F}$.
\end{itemize}
Condition (3) is called the \emph{ultrametric inequality}.
We denote by $\Gamma_{\FF} \coloneqq \val(\mathbb{F}^\times)$ the \emph{value group} of $\mathbb{F}$, which is an additive subgroup of $(\mathbb{R},+)$.
If $\Gamma_{\FF}  \neq \{0\}$, the valuation is called \emph{nontrivial}. The \emph{valuation ring} and the \emph{valuation ideal} are defined as
\begin{align*}
    \mathcal{O}_{\mathbb{F}} \coloneqq \{ \, a \in \mathbb{F} \mid \val(a) \geq 0 \,\}, \quad \text{and} \quad \mathfrak{m}_{\mathbb{F}} \coloneqq \{ \, a \in \mathbb{F} \mid \val(a) > 0 \,\}
\end{align*}
respectively. The valuation ring is a local ring with maximal ideal $\mathfrak{m}_{\mathbb{F}}$ (see e.g. \cite[Proposition 2.4.5]{Knebusch2022}). We write $k_{\mathbb{F}} \coloneqq \mathcal{O}_{\mathbb{F}}/\mathfrak{m}_\mathbb{F}$ for the residue field and $\res_{\mathbb{F}}\colon \mathcal{O}_\mathbb{F} \to k_\mathbb{F}$ for the natural projection map.

We call $\mathbb{F}$ a \emph{valued real closed field}, if the valuation is \emph{compatible with the ordering} of $\mathbb{F}$, that is, if for all $a,b \in \mathbb{F}$ we have
\begin{align}
\label{Eq:OrderCompatibleValuation}
0 \leq a \leq b \Rightarrow \val(a) \geq \val(b).
\end{align}
This condition is satisfied if and only if the valuation ring $\cO_\FF$ is a \emph{convex subring} of $\FF$. This means that, if $a\le c \le b$ and $a, b \in \cO_\FF$, then $c\in \cO_\FF$.
In this case, the residue field $k_\mathbb{F}$ is also a real closed field whose ordering is given by $\res_{\mathbb{F}}(a) \geq 0$ whenever $a \in \mathcal{O}^\times_{\mathbb{F}}$ and $a \geq 0$ \cite[Prop.~3.5.10]{Scheiderer2024}. An \emph{extension of valued real closed fields} $\mathbb{K} \subset \mathbb{F}$ is an extension of real closed fields such that the valuation on $\mathbb{F}$ restricts to the valuation on $\mathbb{K}$. We refer the reader to \cite{Knebusch2022,Scheiderer2024} for more details on order compatible valuations and convex subrings.

\begin{example}
\label{ex:PuiseuxSeries}
Our primary example for a real closed field with a nontrivial order compatible valuation is the field of \emph{Hahn series} with real coefficients and exponents, denoted $\mathbb{R}((t^\RR))$, whose nonzero elements are formal series of the form
\begin{align}
\label{Eq:aPuiseuxSeries}
a = \sum_{v \in \supp(a)} a_v t^{v}, \qquad a_v \in \RR \setminus \{0\}
\end{align}
where $\supp(a) \subset \RR$ is such that there exists $v_0 \in \supp(a)$ that equals to the infimum of $\supp(a)$ in $\RR$. Then the valuation of $a$ is given by $\val(a) \coloneqq v_0$, the value group of $\mathbb{R}((t^\RR))$ is $\mathbb{R}$  and 
the residue field $k_{\mathbb{R}((t^\RR))}$ is isomorphic to $\mathbb{R}$.
An element $a$ as in \eqref{Eq:aPuiseuxSeries} is \emph{nonnegative}, written as $a \geq 0$, if its leading coefficient $a_{v_0}$ is positive in the ordering of~$\mathbb{R}$. We refer to \cite[Ch.~6]{allingFoundationsAnalysisSurreal1987} for more details on Hahn series.

The field of Hahn series $\RR((t^\RR))$ is a valued real closed extension of $\RR$, where $\RR$ is equipped with the trivial valuation, i.e., $\val(a)=0$ for all $a\in \RR$. Although this case might seem "trivial", the study of tropicalizations over trivially valued real closed fields leads to interesting results and is worthwhile in its own right.
\end{example}

\begin{assumption}
    \label{assumtion:valued_fields}
     Let $\KK \subset \FF \subset \MM$ be an extension of valued real closed fields such that the valuation on $\FF$ is nontrivial and $\Gamma_\MM = \RR$. Every nontrivially valued real closed field admits a \emph{splitting} or \emph{cross section}, that is, there exitsts a group homomorphism $\psi\colon (\Gamma_\FF,+) \to (\FF^\times,\cdot)$ such that $\val(\psi(w)) = w$ for all $w\in \Gamma_\FF$, see e.g. \cite[Prop.~3.5.10]{Scheiderer2024}. We denote the image of $w \in \Gamma_\FF$ under the splitting $\psi$ by $t^w$. We write $\ac_\FF \colon \FF^\times \to k^\times_\FF$, $a \mapsto \res_\FF(t^{-\val(a)}a)$
     for the associated \emph{angular component} map, which depends on the choice of the splitting.
     
     If $\val$ is a valuation, then $\lambda \cdot \val$ is a valuation as well for any $\lambda \in \RR_{>0}$. We will then assume any nontrivial valuation $\val$ to be normalized so that the value group contains $1$, and define $t \coloneqq t^1$.
\end{assumption}

In the case of Hahn series (Example~\ref{ex:PuiseuxSeries}), the angular component remembers the coefficient of the term of lowest degree in the series. To simultaneously remember both the valuation and angular component, we use the following construction, which is commonly used in the model-theoretic study of valued fields, see e.g. \cite{flennerRelativeDecidabilityDefinability2011, yinGeneralizedEulerCharacteristic2017} and references therein. Let $\FF$ be a nontrivially valued real closed field, and consider the  multiplicative subgroup $1 + \m_\FF$ of $\mathbb{F}^\times$. 
The quotient multiplicative group $\RV^\times_\FF \coloneqq \FF^\times / (1 + \m_\FF)$ is called the \emph{Residual Value sort} or \emph{RV sort} of $\FF$, and we denote by $\rv\colon \FF^\times \to \RV^\times_\FF$ the quotient map. For later reference, we summarize the main objects in the following commutative diagram
\begin{equation}
\label{Eq:CommutativeDiagram}
\begin{tikzcd}[sep=large]
& \cO_\FF^\times \arrow[r, hookrightarrow] \arrow[d, "\res_\FF"]
& \FF^\times \arrow[d, "\rv"] \arrow[dr, "\val"] \\
1 \arrow[r] & k^\times_\FF \arrow[r] & \RV^\times_\FF \arrow[r, "\val_{\rv}"] & \Gamma_\FF \arrow[r] & 0
\end{tikzcd}
\end{equation}
where the bottom row is exact. A choice of a splitting gives a (non-canonical) isomorphism 
\begin{alignat}{2}
\label{Eq:RVIso}
    \RV^\times_\FF&\longleftrightarrow && k^\times_\FF \times \Gamma_\FF \nonumber \\
    z = \rv(a) & \, \, \longmapsto &&(\,\ac_\FF (a), \,\val(a)\,) \\
    \rv(ut^w)  & \, \, \longmapsfrom &&(\,\res_\FF(u),\,w\,) \nonumber.
\end{alignat}
We fix such an isomorphism and identify $\RV^\times_\FF$ with $k^\times_\FF \times \Gamma_\FF$. Now, the maps in~\eqref{Eq:CommutativeDiagram} can be described as $k^\times_\FF \to \RV_\FF^\times$, $y \mapsto (y,0)$ and $\val_{\rv}\colon \RV_\FF^\times \to \Gamma_\FF$, $(y,w) \mapsto w$.

We add an additional symbol to $\RV^\times_\FF$ and define $\RV_\FF \coloneqq \RV^\times_\FF \cup \{0\}$. We then extend the $\rv$ map to $\rv\colon \FF \to \RV_\FF$ by setting $\rv(0) = 0$.
The set $\RV_\FF$ has a structure of a hyperfield, as we will see in Section~\ref{Sec:Hyperfields}.
Moreover, $\RV_\FF$ has an ordering: $z_1 \prec z_2$ in $\RV_\FF$ if and only if $\rv^{-1}(z_1) < \rv^{-1}(z_2)$. We write
\[ \RV_{\FF,>0} \coloneqq \{ \, \rv(a) \in \RV^\times_\FF \mid a > 0\, \} \cong k_{\FF,>0} \times \Gamma_\FF  \]
for the set of positive elements in $\RV_\FF$. For $(y_1, w_1), (y_2,w_2) \in \RV_{\FF,>0}$, the order is explicitly given as 
\begin{align*}
    (y_1, w_1) \prec  (y_2,w_2) \Leftrightarrow \begin{cases}
         w_1 > w_2, \text{ or } \\
         w_1 = w_2 \text{ and } y_1 < y_2
    \end{cases}.
\end{align*}

\subsection{Hyperfields}
\label{Sec:Hyperfields} 
Hyperfields are generalizations of fields, where addition can be a multivalued map. Their study goes back at least to the work of Krasner \cite{krasnerLapproximationCorpsValues1959,krasnerClassHyperringsHyperfields1983}, and their first appearance in tropical geometry occurred in works of Viro \cite{viroHyperfieldsTropicalGeometry2010, viroBasicConceptsTropical2011}. In this section, we recall some background on hyperfields that is necessary for Section~\ref{Section:TropofDefFunctions}. Our exposition closely follows \cite{maxwellGeometryTropicalExtensions2024}.

\begin{definition}
A \emph{hyperfield} is a tuple $(\mathbb{H}, \oplus, \odot, 0,1)$ where $\mathbb{H}^\times \coloneqq \mathbb{H} \setminus \{0\}$, $(\mathbb{H}^\times, \odot, 1)$ is an abelian group, and
\[
\oplus \colon \mathbb{H} \times \mathbb{H} \rightarrow 2^{\mathbb{H}} \setminus \{\emptyset \} \] 
satisfies the following axioms:
\begin{itemize}
\item Identity:  $0 \oplus x =\{x\}$ for all $x \in \mathbb{H}$.
\item Unique Inverse: For all $x \in \mathbb{H}$, there is a unique $-x \in \mathbb{H}$ such that $0 \in x\oplus (-x)$.
\item Reversibility: For all $x,y,z \in \mathbb{H}$, we have $z \in x\oplus y$ if and only if $y \in z\oplus(-x)$.
\item Commutativity: $x\oplus y=y \oplus x$ for all $x,y \in \mathbb{H}$,
\item Associativity: $(x\oplus y) \oplus z=x\oplus (y\oplus z)$, where $(x\oplus y) \oplus z = \bigcup_{u \in x\oplus y} u \oplus z$ and  $x\oplus (y \oplus z) = \bigcup_{u \in y\oplus z} x \oplus u$.
\item Distributivity: $z \odot(x\oplus y)=z \odot x\oplus z \odot y$, where $z \odot(x\oplus y) = \bigcup_{u \in x\oplus y} z \odot u$.
\item $0 \odot x = 0$ for all $x \in H$.
\end{itemize}
\end{definition}

The simplest instance of hyperfields are ordinary fields. In this case, the hyperaddition $x\oplus y$ always gives a singleton. In the next example, we present two additional examples that play an important role in tropical geometry.
 \begin{example}
     \begin{itemize}
     \item[]
         \item[(a)]The \emph{Krasner hyperfield}  is the set $\mathbb{H}_K \coloneqq \{0,1\}$ with multiplication $1 \odot 1 = 1, \, 0 \odot 1 = 1 \odot 0 = 0 \odot 0 = 0$ and hyperaddition defined as
         \[ 0\oplus 0 = \{0\}, \qquad 0\oplus 1 = 1 \oplus 0 = \{1\}, \qquad 1 \oplus 1 = \{0,1\}. \]
         \item[(b)] The \emph{sign hyperfield} is the set $\mathbb{S} \coloneqq \{-1,0,1\}$ with multiplicative group $\mathbb{S}^\times = \{-1,1\}$ and hyperaddition defined~as
         \[0 \oplus x = \{x\}, \qquad x\oplus x = \{x\},  \quad \text{ for all } x \in \mathbb{S} \qquad \text{ and } \quad 1 \oplus -1 = \{-1,0,1\}.\]
     \end{itemize}
 \end{example}
 Following \cite[Def.~2.6]{maxwellGeometryTropicalExtensions2024}, we define the \emph{tropical extension of a hyperfield} as follows.
 
 \begin{definition}
 \label{Def:HyperAddition}
 Let $\mathbb{H}$ be a hyperfield and $(\Gamma,+)$ an (additive) ordered abelian group. The \emph{tropical extension of $\mathbb{H}$ by $\Gamma$} is the set
\[\mathbb{H} \rtimes \Gamma=\left\{\, (x, w) \mid x \in \mathbb{H}^{\times}, w \in \Gamma\,\right\} \cup\{\,0\,\}\]
where multiplication is given by $\left(x_1, w_1\right) \odot\left(x_2, w_2\right)\coloneqq\left(x_1 \odot_{\mathbb{H}} x_2, w_1+w_2\right)$ and addition is given by
\[
\left(x_1, w_1\right) \oplus \left(x_2, w_2\right) \coloneqq \begin{cases}\left(x_1, w_1\right) & w_1<w_2, \\
\left(x_2, w_2\right) & w_2<w_1, \\ 
\left\{\,(x, w_1) \mid x \in x_1 \oplus_{\mathbb{H}} x_2\,\right\} & w_1=w_2,\; 0 \notin x_1 \oplus x_2,\\ 
\left\{\, (x, w_1) \mid x \in\left(x_1 \oplus_{\mathbb{H}} x_2\right) \setminus \{0\} \, \right\} \cup  \left\{\,(y, v) \mid v>w_1, y \in \mathbb{H}^{\times}\,\right\} \cup\{0\} & w_1=w_2, \; 0 \in x_1 \oplus x_2.\end{cases}
\]
 \end{definition}

Tropical extensions of hyperfields are again hyperfields. If $\mathbb{H}$ is the Krasner hyperfield $\mathbb{H}_K$, the sign hyperfield $\mathbb{S}$ or a field, then the tropical extension $\mathbb{H} \rtimes \Gamma$ is a \emph{stringent} hyperfield, that is, the hyperaddition $x \oplus y$ is multivalued if and only if $0 \in x \oplus y$ \cite[Sec.~4]{bowlerClassificationDoublyDistributive2021}. These stringent tropical extensions are exactly the hyperfields of primary interest in this paper. For the reader’s convenience, we now describe their construction explicitly.
 \begin{example}
 \label{Examples:Hyperadditions}
 Let $\FF$ be a nontrivially valued real closed field with value group $\Gamma_\FF$ and residue field $k_\FF$.
     \begin{itemize}
         \item[(a)]The \emph{tropical hyperfield} $\mathbb{T}_\FF \coloneqq \mathbb{H}_K \rtimes \Gamma_\FF$ is the extension of the Krasner hyperfield $\mathbb{H}_K$ by $\Gamma_\FF$. 
         We denote by $\infty$ the identity element in $\mathbb{T}_\FF$. Since $\mathbb{H}^\times = \{ 1\}$, we identify $\mathbb{T}_\FF$ with $\Gamma_\FF \cup \{ \infty \}$.
         The hyperaddition in $\mathbb{T}_\FF$ is given by
\[
w_1 \oplus w_2 = \begin{cases}\min(w_1, w_2)  & w_1 \neq w_2, \\  \left\{\,v \in \Gamma_\FF \mid v \geq w_1 \,\right\} \cup\{\infty\} & w_1=w_2.\end{cases}
\]
         \item[(b)] The \emph{signed tropical hyperfield} $\mathbb{RT}_\FF \coloneqq \mathbb{S} \rtimes \Gamma_\FF$ is the extension of the sign hyperfield $\mathbb{S}$ by $\Gamma_\FF$. Its identity element is again denoted by $\infty$, and the hyperaddition is given by
         \[
\left(x_1, w_1\right) \oplus \left(x_2, w_2\right) \coloneqq \begin{cases}\left(x_1, w_1\right) & w_1<w_2, \\
\left(x_2, w_2\right) & w_2<w_1, \\ 
(x_1, w_1) & w_1=w_2,\; x_1 = x_2,\\ 
\left\{\, (\pm 1, v) \mid v\in \Gamma_\FF, \, v\geq w_1 \,\right\} \cup\{\infty\} & w_1=w_2, \; x_1 \neq x_2.\end{cases}
\]
        \item[(c)] The \emph{RV-tropical hyperfield} $\RV_\FF \coloneqq k_\FF \rtimes \Gamma_\FF$ is the extension of $k_\FF$ by $\Gamma_\FF$.
        \[
\left(x_1, w_1\right) \oplus \left(x_2, w_2\right) \coloneqq \begin{cases}\left(x_1, w_1\right) & w_1<w_2, \\
\left(x_2, w_2\right) & w_2<w_1, \\ 
(x_1 + x_2, w_1)  & w_1=w_2,\; x_1 + x_2 \neq 0,\\ 
 \left\{\,(y, v) \mid v\in \Gamma_\FF,\, v>w_1,\, y \in k^{\times}_\FF\,\right\} \cup\{0\} & w_1=w_2, \; x_1 + x_2 = 0.\end{cases}
\]
     \end{itemize}
     
 \end{example}
The motivation behind the definition of hyperaddition in a tropical extension is to capture the cancellation of leading terms in $\FF$. For example, in each case of \Cref{Examples:Hyperadditions}, the hypersum $(x_1,w_1) \oplus (x_2,w_2)$ is a singleton if and only if, for every choice of preimages $a_1,a_2 \in \FF$, one can guarantee that no cancellation of leading terms occurs in $a_1+a_2$ based on the data encoded by $(x_1,w_1)$ and $(x_2,w_2)$. This happens, for instance, when the valuations $w_1$ and $w_2$ are different, when the leading coefficients have the same sign, or, in the case of the RV-tropical hyperfield, when the leading coefficients do not cancel.
 
The following properties of hyperfields will be used in Section~\ref{Section:TropofDefFunctions}.
A map $\varphi\colon \mathbb{H}_1 \to\mathbb{H}_2$ is a \emph{hyperfield morphism} if its restriction $\varphi\colon \mathbb{H}_1^\times \to\mathbb{H}_2^\times$ is a group homomorphism, $\varphi\left(0\right) =0$, and we have
\begin{align*}
\varphi\left(x \oplus y\right) \subset \varphi(x) \oplus \varphi(y).
\end{align*}

 \begin{lemma}
 \label{Lemma:HyperfieldMorphisms}
 Let $\mathbb{F}$ be a nontrivially valued real closed field.
    We have the following commutative diagram of morphisms of hyperfields
\begin{equation}
\label{Eq:HyperfieldMaps}
    \begin{tikzcd}[sep=large]
& \FF \arrow[dl, "\rv"] \arrow[d, "\sval"] \arrow[dr, "\val"] & \\
\RV_\FF \arrow[r, "\phi"] & \mathbb{RT}_\FF \arrow[r, "\psi"] & \mathbb{T}_\FF,
\end{tikzcd}
\end{equation}
where the maps are given as follows    
\begin{alignat*}{5}
\rv\colon& \FF \longrightarrow \RV_\FF 
&\quad \sval\colon & \FF \longrightarrow \mathbb{RT}_\FF 
&\quad \val\colon & \FF \longrightarrow \mathbb{T}_\FF
&\quad \phi\colon& \RV_\FF \longrightarrow \mathbb{RT}_\FF 
&\quad \psi\colon& \mathbb{RT}_\FF \longrightarrow \mathbb{T}_\FF \\
& a \mapsto (\ac_\FF(a),\val(a)) 
&\quad & a \mapsto (\sign(a),\val(a))  
&\quad & a \mapsto \val(a)
&\quad & (y,w) \mapsto (\sign(y),w) 
&\quad & (x,w) \mapsto w \\
& 0 \mapsto 0 
&\quad & 0 \mapsto \infty  
&\quad & 0 \mapsto \infty 
&\quad & 0 \mapsto \infty 
&\quad & \infty \mapsto \infty
\end{alignat*}
    \end{lemma}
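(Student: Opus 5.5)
We check three things: commutativity of the diagram, that each map is a group homomorphism on units sending $0$ to $0$, and the inclusion $\varphi(x\oplus y)\subset\varphi(x)\oplus\varphi(y)$ for each map $\varphi$. We use the identification $\RV_\FF^\times\cong k_\FF^\times\times\Gamma_\FF$ from \eqref{Eq:RVIso}, so that $\rv(a)=(\ac_\FF(a),\val(a))$.

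\emph{Commutativity.} For $a\in\FF^\times$ we have $\phi(\rv(a))=(\sign(\ac_\FF(a)),\val(a))$. Since $t^{-\val(a)}>0$ and the residue map preserves signs of units (the ordering of $k_\FF$ is induced from $\cO_\FF$), we get $\sign(\ac_\FF(a))=\sign(t^{-\val(a)}a)=\sign(a)$. Hence $\phi\circ\rv=\sval$. The identity $\psi\circ\sval=\val$ is immediate, and both compositions send $0$ to $0$.

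\emph{Multiplicativity.} The valuation is a homomorphism $\FF^\times\to\Gamma_\FF$, and $\sign$ is multiplicative. For $\ac_\FF$, note that $t^{-\val(ab)}ab=(t^{-\val(a)}a)(t^{-\val(b)}b)$, because $t^{\bullet}$ is a group homomorphism; applying $\res_\FF$, which is multiplicative on $\cO_\FF^\times$, gives $\ac_\FF(ab)=\ac_\FF(a)\ac_\FF(b)$. The map $\phi$ is multiplicative since $\sign$ is multiplicative on $k_\FF^\times$, and $\psi$ is a projection.

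\emph{Hyperaddition, field maps.} Since $\FF$ is a field, $a\oplus b=\{a+b\}$, so we must show $\rv(a+b)\in\rv(a)\oplus\rv(b)$. It then suffices to treat $\rv$, since the images under $\sval$ and $\val$ follow by applying $\phi$ and $\psi$ once these are known to be morphisms. The cases with $a=0$ or $b=0$ are trivial. Write $a=t^{w_1}u_1$ and $b=t^{w_2}u_2$ with $u_i\in\cO_\FF^\times$.
\begin{itemize}
\item[(1)] If $w_1<w_2$, then $a+b=t^{w_1}(u_1+t^{w_2-w_1}u_2)$, where $t^{w_2-w_1}u_2\in\m_\FF$. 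Hence $\rv(a+b)=(\res u_1,w_1)=\rv(a)$.
\item[(2)] If $w_1=w_2$ and $\res u_1+\res u_2\neq0$, then $u_1+u_2$ is a unit with residue $\res u_1+\res u_2$, which gives the third case of Example~\ref{Examples:Hyperadditions}(c).
\item[(3)] If $w_1=w_2$ and the residues cancel, then $u_1+u_2\in\m_\FF$. So either $a+b=0$ or $\val(a+b)>w_1$, and in both situations $\rv(a+b)$ lies in the fourth-case set.
\end{itemize}

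\emph{Hyperaddition, $\phi$ and $\psi$.} We compare Example~\ref{Examples:Hyperadditions}(c) with (b), and (b) with (a), case by case.
\begin{itemize}
\item[(1)] Unequal valuations are preserved on both sides.
\item[(2)] Suppose the valuations are equal and $x_1+x_2\neq0$. If $\sign x_1=\sign x_2$, then $x_1+x_2$ has the same sign, matching the singleton in $\mathbb{RT}_\FF$. If the signs differ, the target is the large set, which contains $(\sign(x_1+x_2),w_1)$.
\item[(3)] Suppose the valuations are equal and $x_1+x_2=0$. Then the signs differ, and the image of the set $\{(y,v):v>w_1\}\cup\{0\}$ lies in $\{(\pm1,v):v\ge w_1\}\cup\{\infty\}$.
\item[(4)] For $\psi$, sign data is forgotten. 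The singleton case in (b) maps to $w_1\in w_1\oplus w_1$, and the large set maps into $\{v\ge w_1\}\cup\{\infty\}$.
\end{itemize}

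The only point that is not bookkeeping is the sign compatibility of $\res_\FF$ used in the commutativity step; everything else is a case check.
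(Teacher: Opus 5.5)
Your proposal is correct and follows the same route as the paper: commutativity via $\sign(\ac_\FF(a))=\sign(a)$, multiplicativity of sign, angular component and valuation, and a case-by-case inspection of the hyperadditions in Example~\ref{Examples:Hyperadditions}, which the paper only sketches and you carry out explicitly. The one fact you use without comment, $t^{w}>0$, holds because $\Gamma_\FF$ is divisible and the splitting is a homomorphism, so $t^{w}=(t^{w/2})^2$.
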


    \begin{proof}The commutativity of the diagram \eqref{Eq:HyperfieldMaps} is clear by construction, noting that $\sign(\ac_\FF(a)) = \sign(a)$ for each $a \in \FF$. Since the sign, angular component and valuation maps are group homomorphism, all the maps in \eqref{Eq:HyperfieldMaps} restrict the group homomorphism. Compatibility with hyperaddition follows by inspection of the definitions of hyperaddition given in Example~\ref{Examples:Hyperadditions}. For further details, we refer to \cite[Sec.~2.1]{maxwellGeometryTropicalExtensions2024}. 
    \end{proof}

We extend the hyperaddition on a hyperfied $\mathbb{H}$ to $m$ elements $x_1,\dots, x_m \in \mathbb{H}$ by (inductively) defining
\begin{align*}
    x_1 \oplus \dots \oplus x_m \coloneqq \bigcup_{x \in  x_1 \oplus \dots \oplus x_{m-1}} x \oplus x_m. 
\end{align*}

 \begin{lemma}
 \label{Lemma:TropExtensionSumMinimum}
      Let $\mathbb{H} \rtimes \Gamma$ be the tropical extension of an ordered abelian group $\Gamma$  by a hyperfield $\mathbb{H}$. For any $(x_1,w_1),\dots,(x_m,w_m) \in \mathbb{H} \rtimes \Gamma$, we have
       \begin{align}
        \label{Eq:HyperfieldSumMinimal}
       \bigoplus_{i=1}^m (x_i,w_i) =  \bigoplus_{i=1, \, w_i = W}^m (x_i,w_i) 
       \end{align}
      where $W \coloneqq \min_{i \in [m]} w_i$.
 \end{lemma}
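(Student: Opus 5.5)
Throughout, write $I \coloneqq \{ i \in [m] \mid w_i = W \}$. Note first that if every term is $0$ the claim is trivial, so assume $W$ is finite; then $I \neq \emptyset$. I will argue by induction on $m$, using associativity and commutativity of $\oplus$ (\Cref{Def:HyperAddition} gives a hyperfield, so the iterated hypersum is independent of the ordering of the summands). This lets me reorder the summands so that the indices with $w_i > W$ come last: say $w_1 = \dots = w_k = W$ and $w_{k+1}, \dots, w_m > W$.

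The key step is the following claim: every element of $S \coloneqq \bigoplus_{i \le k}(x_i,w_i)$ is either $0$ or of the form $(x,v)$ with $v \ge W$. In addition, I claim a \emph{saturation property}: if some $(x,v)\in S$ has $v > W$, or $0 \in S$, then $S$ contains every $(y,v')$ with $v' > W$ and also contains $0$.

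\begin{itemize}
\item[] \textbf{The monotonicity part.} This follows by an easy induction from the four cases of \Cref{Def:HyperAddition}: a hypersum never produces valuations smaller than the minimum of the inputs.
\item[] \textbf{The saturation part.} An element of valuation $> W$ can only arise from the fourth case at level $W$ (cancellation, $0 \in x \oplus x'$). That case outputs \emph{all} $(y,v)$ with $v > W$ together with $0$. Adding further terms of valuation $W$ to such elements again yields, via the first case or the fourth case, everything above $W$. Tracking this carefully through the induction is the main technical point.
\end{itemize}

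Now add a term $(x_j,w_j)$ with $w_j > W$ to $S$, and compute $S \oplus (x_j,w_j)$ elementwise.
\begin{itemize}
\item[] \textbf{Elements $(x,W) \in S$.} The first case of \Cref{Def:HyperAddition} gives back $(x,W)$.
\item[] \textbf{The element $0 \in S$.} We get $(x_j,w_j)$, which already lies in $S$ by saturation.
\item[] \textbf{Elements $(x,v) \in S$ with $v > W$.} The result consists of elements of valuation $\ge \min(v,w_j) > W$, or $0$, all of which lie in $S$ by saturation.
\end{itemize}
Hence $S \oplus (x_j,w_j) \subset S$. Conversely, $S \subset S \oplus (x_j,w_j)$:
\begin{itemize}
\item[] \textbf{Elements at level $W$.} They are reproduced directly, as in the first bullet above.
\item[] \textbf{Elements above $W$ or $0$.} These are present only in the saturated situation. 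There, each target $(y,v')$ or $0$ must be obtained from some element of $S$. I would take $(y,v') \oplus (x_j,w_j)$ when $v' \ne w_j$. For $v' = w_j$, I would use a fourth-case cancellation: choose $(x',w_j)\in S$ with $0 \in x' \oplus x_j$, and then the sum contains $0$ and all higher levels. Additionally, $(y,w_j)$ arises from $(y',v'')$ with $v''>w_j$ added to $(x_j,w_j)$ only when $y = x_j$. For general $y \in \mathbb{H}^\times$ I would use $y' \in \mathbb{H}^\times$ with $y \in y' \oplus x_j$, so that $(y',w_j)\oplus(x_j,w_j)$ contains $(y,w_j)$. Such a $y'$ exists by reversibility: $y' \in y \oplus (-x_j)$, with $y'\neq 0$ unless $y = x_j$, and that case is already handled.
\end{itemize}

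Iterating over $j = k+1, \dots, m$ gives \eqref{Eq:HyperfieldSumMinimal}. I expect the saturation bookkeeping in the reverse inclusion, which is exactly where the reversibility axiom is needed, to be the main obstacle. The rest is direct case inspection of \Cref{Def:HyperAddition}.
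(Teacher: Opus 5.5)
Your proposal is correct in substance and uses the same idea as the paper. A summand of valuation $>W$ is absorbed by the hypersum $S$ of the minimal-valuation summands, because as soon as $S$ contains $0$ or an element above $W$, it contains all of $\{(y,v)\mid v>W\}\cup\{0\}$. The paper organizes this as an induction on $m$ with a case split on the last summand, so it needs no reordering. It only checks $S\oplus(x_m,w_m)\subset S$ and leaves both the saturation property and the reverse inclusion implicit. Your explicit treatment of both, including the use of reversibility at level $w_j$, fills in what the paper's proof omits.

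Two sentences of your sketch are false as written, although neither breaks the argument.

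First, saturation is \emph{not} preserved when further terms of valuation $W$ are added. In $k_\FF\rtimes\Gamma_\FF$ one has $(1,W)\oplus(-1,W)\oplus(1,W)=\{(1,W)\}$, because $(y,v)\oplus(1,W)=\{(1,W)\}$ for $v>W$ and $0\oplus(1,W)=\{(1,W)\}$. The correct argument looks only at the last step $S=S'\oplus(x_k,W)$. Elements of $S'$ equal to $0$ or above $W$ contribute only $(x_k,W)$. Hence every element of $S$ that is $0$ or lies above $W$ comes from the cancellation $(-x_k,W)\oplus(x_k,W)$, which contains all such elements.

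Second, $(y,v')\oplus(x_j,w_j)$ returns $(y,v')$ only for $v'<w_j$; for $v'>w_j$ it returns $(x_j,w_j)$. Those targets, together with $0$, are supplied by the cancellation $(-x_j,w_j)\oplus(x_j,w_j)$, which you use anyway. So the case split should read: $v'<w_j$; then $v'>w_j$ or $0$; then $v'=w_j$.
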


\begin{proof}
    We prove the claim inductively. The case $m=2$ follows directly from Definition~\ref{Def:HyperAddition}. Assume that the claim is true for $m-1$. Thus, for $W_{m-1} \coloneqq \min_{i\in [m-1]} w_i$ we have 
          \begin{align}
        \label{Eq:HyperfieldSumMinimalProof}
          \bigoplus_{i=1}^{m-1} (x_i,w_i) =  \bigoplus_{i=1, \,w_i = W_{m-1}}^{m-1} (x_i,w_i) 
          \end{align}
     If $w_m = W_{m-1}$, then \eqref{Eq:HyperfieldSumMinimalProof} directly implies~\eqref{Eq:HyperfieldSumMinimal}.
      If $w_m < W_{m-1}$, then the hypersum of~\eqref{Eq:HyperfieldSumMinimalProof} and $(x_m,w_m)$ equals $(x_m,w_m)$, since for every element $(y,v)$ in~\eqref{Eq:HyperfieldSumMinimalProof} we have $v \geq W_{m-1} > w_m$. In particular, \eqref{Eq:HyperfieldSumMinimal} holds true.

      In the rest of the proof, we assume that $w_m > W_{m-1}$. Let $(y,v)$ be in the set \eqref{Eq:HyperfieldSumMinimalProof}. If $v < w_m$, then $(y,v) \oplus (x_m,w_m) = (y,v)$, which is contained in \eqref{Eq:HyperfieldSumMinimalProof}. If $v \geq w_m$, then $(y,v) \oplus (x_m,w_m)$ contains elements of the form $(y',v')$ where $v' \geq w_m > W_{m-1}$, but all elements with this property are contained in ~\eqref{Eq:HyperfieldSumMinimalProof} (cf. Definition~\ref{Def:HyperAddition}). This shows that the hypersum of \eqref{Eq:HyperfieldSumMinimalProof} and $(x_m,w_m)$ is \eqref{Eq:HyperfieldSumMinimalProof}, finishing the proof.
\end{proof}

\subsection{Model theory and o-minimal geometry}

Both of our main theorems (Theorems~\ref{Thm:A} and~\ref{Thm:B}) concern definable sets in a polynomially bounded o-minimal expansion of a real closed field. In this subsection, we recall the related necessary notions from model theory, following \cite{driesTameTopologyOminimal1998, markerModelTheory2002}.

\begin{definition}
    A \emph{structure} on a nonempty set $\cR$ is a sequence $S = (S_n)_{n \in \NN}$ such that, for each $n \in \NN$:
    \begin{itemize}
        \item $S_n$ is a boolean algebra of subsets of $\cR^n$, i.e. a collection of subsets of $\cR^n$ containing $\emptyset$ such that, if $A, B \in S_n$, then $A \cup B \in S_n$ and $\cR^n \setminus A \in S_n$;
        \item if $A \in S_n$, then $A \times \cR \in S_{n+1}$ and $ \cR\times A \in S_{n+1}$;
        \item $\{ \,(a_1, \dots , a_n) \in \cR^n \mid a_1 = a_n  \, \} \in S_n$; 
        \item if $\pi \colon \cR^{n+1} \to \cR^n$ is the projection onto the first $n$ coordinates and $A \in S_{n+1}$, then $\pi(A) \in S_n$.
    \end{itemize}
    Given a structure $S$ on $\cR$, we say that $A \subset \cR^n$ is \emph{definable} if $A \in S_n$. We say that a function is definable if its graph is definable. 
\end{definition}
We are interested in sets equipped with an ordering, such as real closed fields and, more generally, ordered groups. A (linearly) ordered set $(\cR, <)$ is called \emph{dense} if for all $a, b \in \cR$ with $a<b$, there exists $c \in \cR$ such that $a<c<b$. We say that $\cR$ has \emph{no endpoints} if $\cR$ has no smallest or largest element.
Following the French tradition, we denote by $]a, b[$ the open intervals in $\cR$, and by $[a, b]$ the closed intervals. For all $n \in \NN$, we equip $\cR^n$ with the box topology, i.e. the topology generated by the open sets of the form $]a_1, b_1[\,\times \dots \times \, ]a_n , b_n[$. In case $\cR$ is a real closed field, we also call the box topology the \emph{strong topology}, as usual.
\begin{definition}
    Let $(\cR, <)$ be a dense linearly ordered nonempty set without endpoints. We call a structure $S$ on $(\cR, <)$ \emph{order-minimal} (or \emph{o-minimal}) if 
    \begin{itemize}
        \item $\{ \, (a,b) \in \cR^2 \mid a<b \,\} \in S_2$;
        \item the sets in $S_1$ are exactly finite unions of (possibly unbounded) intervals and points.
    \end{itemize}
\end{definition}

A structure $S$ on a set $\cR$ is usually introduced specifying relations and functions that "generate" $S$. For instance, the structure over a real closed field $\KK$ generated by addition, multiplication, and by $\KK$ (identified with the set of functions from $\KK^0 = \{0\}$ to $\KK$) will consist exactly of semialgebraic sets.

\begin{definition}
    A \emph{model-theoretic structure} $\fR = (\cR, (R_i)_{i \in I}, (f_j)_{j\in J})$ consists of a nonempty set $\cR$, of relations $R_i \subset \cR^{m(i)}$ for $i \in I$ and $m(i) \in \NN$, and of functions $f_j \colon \cR^{n(j)} \to \cR$ for $j \in J$ and $n(j) \in \NN$. Given a model-theoretic structure $\fR$ on $\cR$, we denote $\Def(\fR)$ to be the smallest structure on $\cR$ that contains each relation and the graph of each function in $\fR$. Moreover, for a subset $C \subset \cR$, we denote $\fR_C$ the model-theoretic structure obtained by adding to the functions of $\fR$ the constant functions $c \colon \cR^0 \to \cR$ for $c \in C$. A set $A$ is \emph{definable in $\fR$ with parameters from $C$}, or simply \emph{$C$-definable}, if it is definable in $\fR_C$.
\end{definition}

\begin{definition}
\begin{itemize}
\item[]
\item[(a)] Let $(\cR , <)$ be a dense linearly ordered group without endpoints. A model-theoretic structure $\fR = (\cR, (R_i)_{i \in I}, (f_j)_{j\in J})$ is called \emph{o-minimal} if 
$<$ gives a relation $R_{i_*}$ for some $i_* \in I$, and $\Def(\fR_\cR)$ is an o-minimal structure. 
\item[(b)]
An o-minimal structure  $\fR = (\cR, (R_i)_{i \in I}, (f_j)_{j\in J})$ \emph{expands} an ordered abelian group $\cR$ (respectively an ordered abelian ring $\cR$) if there are functions $0 \colon \cR^0 \to \cR$, $+ \colon \cR^2 \to \cR$, $- \colon \cR \to \cR$ (and $\cdot \colon \cR^2 \to \cR$ for an ordered abelian ring) that are definable and such that $(\cR, 0, +, -)$ is an abelian group (respectively such that $(\cR, 0, +, -, \cdot)$ is an ordered abelian ring).
\end{itemize}
\end{definition}

Definable sets in o-minimal expansions of a real closed fields enjoy most of the properties that usual semialgebriac sets have. We refer the reader to \cite{driesTameTopologyOminimal1998, costeIntroductionOminimalGeometry1999} for detailed accounts of such properties.

Next, we recall an equivalent characterization of definable sets.
A \emph{language} $\cL$ is the union of two sets: a set of \emph{relation symbols} and a set of \emph{function symbols}. Each relation symbol $R$ (respectively each function symbol $f$) is equipped with a number $n_R \in \NN$ (respectively $m_f \in \NN$). We say that that $R$ is $m_R$-ary (respectively that $f$ is $n_f$-ary). $0$-ary function symbols are called \emph{constant symbols}. We denote $\cL_{C}$ the extension of $\cL$ with the constant symbols $c \in C$.

If $\cL$ is a language with relation symbols $(R_i)_{i\in I}$ and function symbols $(f_j)_{j \in J}$, then an \emph{$\cL$-structure} is a model-theoretic structure $\fR=(\cR, (R_i^\cR)_{i\in I}, (f_j^\cR)_{j \in J})$, where $\cR$ is a nonempty set, such that, for every $m$-ary relation symbol $R_i$ and every $n$-ary function symbol $f_j$, we have a corresponding definable set $R_i^\cR \subset \cR^m$ and  a definable function $f_j^\cR \colon \cR^n \to \cR$, called the \emph{interpretations} of $R$ and $f$ in $\fR$. By abuse of notation, we will drop the superscript $\cdot^\cR$, and use the same symbol for the symbols and their interpretations. 

An \emph{$\cL$-term} is an expression constructed using variables and function symbols from $\cL$. An \emph{atomic $\cL$-formula} is an expression containing equality of terms or relations (from $\cL$) of terms. The set of $\cL$-formulas is the closure of the set of atomic $\cL$-formulas under negation $\neg$, and $\wedge$, or $\vee$, and existential quantifiers $\forall$, $\exists$.

We will write $\phi(x_1, \dots , x_n)$ to make explicit the free variables $x_1, \dots , x_n$ of an $\cL$-formula $\phi$. If $\phi(x_1, \dots , x_n)$ is a $\cL$-formula and $\fR$ is an $\cL$-structure, and $a\in \cR^n$, we write 
$$\fR \models \phi(a)$$ 
if $\phi(a)$ is true in $\fR$ (we refer to \cite[Def.~1.1.6]{markerModelTheory2002} for a precise definition). Using this terminology, definable sets in an $\cL$-structure can be described as follows.
\begin{proposition}(\cite[Prop.~1.3.4]{markerModelTheory2002})
    \label{rem:definable}
    Let $\cL$ be a language and $\fR$ be an $\cL$-structure. A set $X \subset \cR^n$ is definable in $\fR$ if and only if
\begin{align*}
    X &= \{ \, a \in \cR^n \mid \fR \models \phi(a,b) \, \}, \qquad \text{ for an $\cL$-formula } \phi(x_1, \dots , x_n, y_1, \dots , y_m)  \text{ and } b \in \cR^m, \text{ or } \\
    X &= \{ \, a \in \cR^n \mid \fR \models \psi(a) \, \} \qquad \quad \text{ for an $\cL_\cR$-formula } \psi(x_1, \dots , x_n). 
\end{align*}
\end{proposition}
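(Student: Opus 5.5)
The statement is the standard correspondence between the semantic notion of definability, via the structure $\Def(\fR_\cR)$, and the syntactic one, via formulas. My plan is to prove the two directions separately. I will also check that the two syntactic descriptions agree: an $\cL$-formula with parameters $b$ is the same thing as an $\cL_\cR$-formula, obtained by substituting constant symbols for the $y_j$, and conversely.

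\emph{Syntactic implies definable.} For this direction I will show, by induction on the complexity of an $\cL_\cR$-formula $\psi(x_1,\dots,x_n)$, that $\{a\in\cR^n \mid \fR\models\psi(a)\}$ lies in $S_n$, where $S=\Def(\fR_\cR)$.
\begin{enumerate}
\item First I treat terms. I prove by induction on terms that the graph of the function $\cR^n\to\cR$ defined by a term $t(x)$ is in $S_{n+1}$. Variables give diagonal sets, which are in $S$ by the diagonal axiom combined with taking products with $\cR$ and permuting coordinates. Permutations of coordinates are obtained from the diagonal sets, intersections and projections. Constants and function symbols have graphs in $S$ by definition. A composite term has as its graph the projection of an intersection of cylinders over the graphs of its components.
\item Atomic formulas $t_1=t_2$ and $R(t_1,\dots,t_k)$ then give projections of intersections of term graphs with the diagonal or with the relation $R$, so they lie in $S$.
\item Finally, $\neg$, $\wedge$ and $\vee$ correspond to complement, intersection and union, which are allowed in a Boolean algebra. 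The quantifier $\exists$ corresponds to projection, and $\forall$ is written as $\neg\exists\neg$.
\end{enumerate}

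\emph{Definable implies syntactic.} For the converse, let $S'_n$ be the family of subsets of $\cR^n$ defined by $\cL_\cR$-formulas. I will check that $S'=(S'_n)$ satisfies the axioms of a structure: it is a Boolean algebra, it is closed under products with $\cR$ by adding a dummy variable, it contains the diagonals $x_1=x_n$, and it is closed under projection via $\exists$. Moreover, $S'$ contains every relation $R_i$ and the graph of every function $f_j$, given respectively by the formulas $R_i(x)$ and $f_j(x)=y$. By minimality of $\Def(\fR_\cR)$ we get $S\subset S'$, which finishes the proof.

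The main obstacle is only bookkeeping: the set-theoretic axioms of a structure provide coordinate permutations and substitution of variables only indirectly. I would handle this first, in a small lemma showing that $S$ is closed under permutation of coordinates and under pullback along graphs of functions in $S$; after that the induction is routine.
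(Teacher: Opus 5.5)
Your proposal is correct and is the standard argument: induction on terms and formulas puts every $\cL_\cR$-formula-defined set in $\Def(\fR_\cR)$, and minimality of $\Def(\fR_\cR)$, applied to the family of formula-defined sets, gives the converse. The paper gives no proof of its own and only cites Marker's Proposition~1.3.4, whose proof is exactly this. You also rightly read ``definable in $\fR$'' as membership in $\Def(\fR_\cR)$ rather than $\Def(\fR)$, which is needed for the equivalence with formulas with parameters $b \in \cR^m$ to hold.
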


\begin{remark}
    Given a model-theoretic structure $\fR = (\cR, (R_i)_{i \in I}, (f_j)_{j\in J})$ one can always construct a language $\cL$ such that $\fR$ becomes an $\cL$-structure by simply introducing a relation symbol for each relation $R_i$ and a function symbol for each function $f_j$.
\end{remark}

If $\fR$ and $\mathfrak{S}$ are $\cL$-models and $\fR \subset \mathfrak{S}$ (defined in the obvious way), we say that $\mathfrak{S}$ is an \emph{elementary extension} of $\fR$ if, for all $\cL$-formulas $\phi(x_1, \dots , x_n)$ and $a \in \cR^n$, we have
    $
        \fR \models \phi(a) \Longleftrightarrow \mathfrak{S} \models \phi(a) 
    $.
For a definable set $X\subset \cR^n$, we denote its $\mathfrak{S}$-extension by
\[  X_\mathfrak{S} \coloneqq \{\, b \in \mathcal{S}^n \mid \mathfrak{S} \models \psi(b)\,\},\]
where $\psi(x_1,\dots,x_n)$ is any $\mathcal{L}_\mathcal{R}$ formula defining $X$ as in ~\Cref{rem:definable}. We write $\fR \prec \mathfrak{S}$ if the extension is elementary.

We conclude this subsection by discussing a crucial assumption on o-minimal structures needed to prove Theorem~\ref{Thm:A} and~\ref{Thm:B}, namely polynomial boundedness.
\begin{definition}
    Let $\fF$ be an o-minimal expansion of a real closed field $(\FF, <, + , \, \cdot \,)$. A \emph{power function} of $\fF$ is a definable function $f \colon \FF_{>0} \to \FF_{>0}$, not identically zero, such that $f(rs) = f(r)f(s)$ for all $r,s \in \FF_{>0}$. The set of power functions is a field with the operations of multiplication and composition. 
    
The derivation map gives an embedding $f \mapsto f'(1)$ of the field of power functions into $\FF$. The image of this map is called \emph{field of exponents}, which we denote by $\Lambda= \Lambda(\fF)$. 
For $\lambda = f'(1)$, we write $f = x^\lambda$ and $f(s) = s^\lambda$. The usual properties of monomial powers still hold true: $(rs)^\lambda = r^\lambda s^\lambda$, $(s^{\lambda_1})^{\lambda_2} = s^{\lambda_1 \lambda_2}$, $s^{\lambda_1}  s^{\lambda_2} = s^{\lambda_1 
    +\lambda_2}$,. We say that $\fF$ is \emph{power bounded} if for every definable function $f \colon \FF \to \FF$ there exists $\lambda \in \Lambda$ such that $\abs{f(s)} \le s^\lambda$ for all $s \in \FF$ sufficiently large. We say that $\fF$ is \emph{polynomially bounded} if it is power bounded and the field of exponents is archimedean, i.e., $\Lambda \subset \RR$.
\end{definition}

\begin{remark}
    \label{rem:exponets_inside_value_group}
    Let $\fF$ be a polynomially bounded o-minimal expansion of a real closed field $\FF$, and equip $\FF$  with an order compatible nontrivial valuation.
    After fixing a splitting and normalizing the valuation as in Assumption~\ref{assumtion:valued_fields}, it follows that
    \[\val(t^\lambda) = \lambda \val(t) = \lambda.\]
     In particular, the field of exponents $\Lambda(\fF)$ is contained in the value group $\Gamma_\FF$.
\end{remark}

For a list of polynomially bounded o-minimal structures, we refer the reader to \cite[p.~505]{driesGeometricCategoriesOminimal1996b}. Semialgebraic sets are the most elementary example. For an equivalent characterizations of polynomially bounded o-minimal structures, see \cite{millerExpansionsRealField1994,millerGrowthDichotomyOminimal1996}. We have the following explicit description of definable functions in polynomially bounded o-minimal structures.

\begin{theorem}[{Preparation theorem for polynomially bounded o-minimal structures \cite[Th.~2.1]{driesOminimalPreparationTheorems2002}, \cite[p.~85]{nowakProofValuationProperty2007}}]
    \label{thm:preparation_theorem}
    Let $\fF$ be a polynomially bounded o-minimal structure expanding a real closed field $\FF$, $f \colon \FF^{n+1} \to \FF$ be a definable function, and $\epsilon \in \QQ_{>0}$. Then there exists definable sets $C_1, \dots, C_m \subset \FF^{n+1}$, exponents $\lambda_1, \dots , \lambda_m \in \Lambda$, and definable functions $r_1, \dots , r_m, c_1, \dots , c_m \colon \FF^n \to \FF$, $u \colon \FF^{n+1} \to ]1-\epsilon, 1+\epsilon[$ s.t., for all $(s, a) \in C_i$:
    \[
        f(s,a) = \abs{s-r_i(a)}^{\lambda_i} c_i(a) u(s,a).
    \]
\end{theorem}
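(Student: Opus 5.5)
The statement is quoted from \cite{driesOminimalPreparationTheorems2002}, so the plan below only follows the standard route, which is model-theoretic. I first turn the uniform statement into a statement about one new element over a fixed parameter tuple. I then prove that statement with the \emph{valuation property} of power-bounded o-minimal theories, and recover the finitely many definable pieces by compactness.

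\textbf{Step 1 (reduction to one new element).} Fix $a \in \FF^n$ and work in an elementary extension $\fR \succ \fF$ that is sufficiently saturated. Let $\mathbb{E} = \mathrm{dcl}(a)$, and let $s^*$ realize a non-principal type over $\mathbb{E}$, i.e. a cut of $\mathbb{E}$. Principal types give the trivial case $s = r(a)$ of a point. Let $V$ be the convex hull of $\mathbb{E}$ in $\mathbb{E}\langle s^* \rangle$ if the cut is at infinity. Otherwise translate and invert so that $s^*$ becomes infinitesimal with respect to $\mathbb{E}$, and take the same convex hull. Then $V$ is a convex subring, hence a valuation ring as in \Cref{Sec:RealClosedValuedFields}, and it is $T$-convex in the sense of \cite{driesTConvexityTameExtensions1995}.

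\textbf{Step 2 (valuation property and residue property).} I would show that for every $y = f(s^*, a) \in \mathbb{E}\langle s^* \rangle^\times$ there are $r \in \mathbb{E}$, $\lambda \in \Lambda$ and $c \in \mathbb{E}^\times$ such that $y / (c\,|s^*-r|^\lambda)$ is a unit of $V$. The field $\mathbb{E}\langle s^* \rangle$ is a simple tame extension of $\mathbb{E}$. Power boundedness (through Miller's growth dichotomy) controls the germ at $r$ of a unary definable function by $|s-r|^{\lambda}$. From this I expect $v(\mathbb{E}\langle s^*\rangle) = v(\mathbb{E}) + \Lambda\, v(s^*-r)$ for a suitably chosen center $r \in \mathbb{E}$. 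The center is chosen to maximize $v(s^*-r)$ when this maximum exists; otherwise $v(s^*-r)$ is chosen with no maximum, which is the immediate-type case.

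Next I need the residue field of $\mathbb{E}\langle s^*\rangle$ to add nothing new beyond what the center already accounts for. Granting this, the residue of $y/(c|s^*-r|^\lambda)$ lies in $\res(\mathbb{E})$. Rescaling $c$ by an element of $\mathbb{E}$ with that residue puts the unit in $1+\mathfrak m$. Finally $1+\mathfrak m \subset\, ]1-\epsilon, 1+\epsilon[$ for each $\epsilon\in\QQ_{>0}$, since $\epsilon$ lies in the prime model and is not infinitesimal.

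\textbf{Step 3 (compactness).} Since $r, c \in \mathrm{dcl}(a)$, we get $r = r_0(a)$ and $c = c_0(a)$ for $\emptyset$-definable functions $r_0, c_0$. The formula stating $f(s,a)/(c_0(a)|s-r_0(a)|^\lambda) \in\, ]1-\epsilon,1+\epsilon[$ belongs to the type of $(s^*,a)$. Each such formula, over all choices of $(r_0,c_0,\lambda)$, defines a set. These sets cover the whole type space, and only countably many exponents occur. Saturation and compactness of the type space then give finitely many such formulas covering $\FF^{n+1}$, and these are the sets $C_i$. Setting $u = f/(c_i|s-r_i|^{\lambda_i})$ on $C_i$ gives the required unit.

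\textbf{Main obstacle.} The hard step is Step 2, the valuation property itself, specifically the immediate case where $v(s^*-r)$ has no maximum over $r\in\mathbb{E}$. There I would follow van den Dries–Speissegger and Tyne. The idea is to choose a pseudo-Cauchy sequence of centers approximating $s^*$. For a definable unary $g$, cell decomposition and monotonicity make $g$ locally of the form $c|s-r|^\lambda$ up to a unit on a neighbourhood of $s^*$ that has parameters in $\mathbb{E}$. One then shows that the valuation of $g(s^*)$ is eventually determined by the centers. Only this uses polynomial boundedness essentially: it makes the exponent $\lambda$ range in the archimedean field $\Lambda$.
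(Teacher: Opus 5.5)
\textbf{Verdict: genuine gap.} The paper gives no proof of this statement; it only quotes \cite{driesOminimalPreparationTheorems2002} and \cite{nowakProofValuationProperty2007}. Their strategy is the one you outline: the valuation property for one new element over $\mathrm{dcl}(a)$, followed by compactness. Your execution of it, however, has a gap in Step~1, and this makes the claim of Step~2 false on part of the type space.

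A non-principal cut of $\mathbb{E}$ need not be at $\pm\infty$ or of the form $b^{\pm}$ with $b\in\mathbb{E}$. For the remaining free cuts, no translation or inversion by elements of $\mathbb{E}$ makes $s^*$ infinitesimal over $\mathbb{E}$. For such $s^*$ we have $s^*\in V$ and $\res(s^*)\notin\res(\mathbb{E})=\mathbb{E}$. So the residue field grows, and your requirement that it ``adds nothing new'' fails from the start. (For cuts at $\pm\infty$ and at $b^\pm$ your Step~2 is fine; it is essentially Miller's asymptotics $g(b+x)\sim c\,x^{\lambda}$ at $0^+$.)

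In fact the conclusion itself can be false. Take the real field (so $\Lambda=\QQ$), $n=0$, $f(s)=s^2+1$, $\mathbb{E}=\mathrm{dcl}(\emptyset)=\RR_{\mathrm{alg}}$, and let $s^*$ realize the cut of $\pi$. Then $\mathbb{E}\langle s^*\rangle\cong\RR_{\mathrm{alg}}(\pi)^{\mathrm{rc}}$ is archimedean. Every convex valuation on it is therefore trivial, and $1+\mathfrak m=\{1\}$. Step~2 would then give $s^{*2}+1=c\,|s^*-r|^{\lambda}$ with $c,r\in\mathbb{E}$ and $\lambda\in\QQ$. This is a nontrivial algebraic relation for the transcendental $s^*$ over $\mathbb{E}$, which is impossible. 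With real parameters allowed, free cuts of $\mathbb{E}=\mathrm{dcl}(a)$ still occur as soon as $n\ge 1$. So the strengthened conclusion ``unit in $1+\mathfrak m$'' cannot be what one proves, and $\epsilon$ cannot enter only at the very end of Step~2.

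What is missing is a separate treatment of free cuts, and this is exactly where the fixed $\epsilon$ is used.

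\textbf{Dense free cuts.} Suppose the cut of $s^*$ is free and dense: for every $\delta\in\mathbb{E}_{>0}$ there are $b_1<s^*<b_2$ in $\mathbb{E}$ with $b_2-b_1<\delta$. By monotonicity, every element of $\mathbb{E}\langle s^*\rangle\setminus\mathbb{E}$ again realizes a free dense cut of $\mathbb{E}$. So one takes $\lambda=0$ and $c\in\mathbb{E}$ with $|f(s^*,a)/c-1|<\epsilon$; no valuation-theoretic argument can produce this.

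\textbf{Free cuts that are not dense.} Here you need a valuation that is nontrivial on $\mathbb{E}$ and adapted to the cut. One example is the stabilizer in $\mathbb{E}$ of the convex subgroup of those $x$ for which $s^*+x$ realizes the same cut. Only with such a valuation does the valuation property do real work.

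Your ``main obstacle'' paragraph does not fit your own setup. With $V$ trivial on $\mathbb{E}$, the values $v(s^*-r)$ for $r\in\mathbb{E}$ always attain a maximum, so the immediate case never arises. Meanwhile the free cuts, which do break the argument, are not addressed.

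Two smaller points:
\begin{itemize}
\item In Step~3 the tuple $a$ must range over the saturated model, so that you cover all types of $(s,x)$, not just over $\FF^n$; otherwise compactness does not apply.
\item The remark that only countably many exponents occur is false in general ($\Lambda$ can be all of $\RR$). It is also unnecessary, since compactness of the type space suffices.
\end{itemize}
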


\subsection{Model theory of valued real closed fields}
\label{subsec:van_den_Dries}
The study of o-minimal expansions of valued fields from a model-theoretic viewpoint started in \cite{driesTConvexityTameExtensions1995, driesTConvexityTameExtensions1997}. We recall in this section their fundamental results.
In the following, $\FF$ denotes a real closed field equipped with an order compatible valuation $\val$. Since we will mostly apply these results in the case where the valuation is nontrivial, we denote the field by $\FF$. Nevertheless, the results discussed in this section are valid even when the valuation on $\FF$ is trivial.

If $\fF$ is a polynomially bounded o-minimal expansion of a real closed field $\FF$, every convex valuation ring $\cO \subset \FF$ is \emph{T-convex} in the sense of \cite{driesTConvexityTameExtensions1995, driesTConvexityTameExtensions1997}, see \cite[(4.2)~Prop.]{driesTConvexityTameExtensions1995}. Moreover, if $\cL$ is the language of $\fF$, and if we fix an order compatible valuation $\val_{\FF}$ (or equivalently, a convex valuation ring $\cO_\FF$), then the residue field $k_\FF$ inherits an $\cL$-structure $\mathfrak{k}_\fF$ from $\fF$, see \cite[(2.6)~Rem.]{driesTConvexityTameExtensions1995}. The structure $\mathfrak{k}_\fF$ arises by identifying $k_{\FF}$ with any subfield of $\cO_\FF$ maximal with respect to inclusion.

We can also induce a structure $\Gamma_{\FF, \mathrm{vg}}$ on the value group $\Gamma_\FF$, by introducing a relation symbol for the image under the valuation map of every definable (without paramaters) function and relation in $\fF$, see \cite[(3.15)]{driesTConvexityTameExtensions1995}. The structure $\Gamma_{\FF, \mathrm{vg}}$ is o-minimal in the polynomially bounded case \cite[(4.5)~Cor.]{driesTConvexityTameExtensions1995}. The value group $\Gamma_{\FF}$ is an ordered vector space over the field of exponents $\Lambda = \Lambda(\fF)$ (which we identify as a subspace of $\Gamma_\FF$ as in \Cref{rem:exponets_inside_value_group}). It is proven in \cite[Th.~B]{driesTConvexityTameExtensions1997} that the definable sets in $\Gamma_{\FF, \mathrm{vg}}$ coincide exactly with the definable subsets of $\Gamma_\FF$, seen as an ordered vector space over $\Lambda$. Such sets are called \emph{semilinear}, and we refer the reader to \cite[p.~25--28]{driesTameTopologyOminimal1998} for their precise description.

Finally, we note that some of the results of \cite{driesTConvexityTameExtensions1995,driesTConvexityTameExtensions1997} have been recently rediscovered in \cite[Sec.~3~and~4]{Allamigeon2020} in the more restrictive semialgebraic setting. 
\section{Tropicalization and initial degenerations}
\subsection{Tropicalizing definable sets}
\label{Sec:TropDefSets}
In this section, we begin investigating the tropicalization of definable sets. From now on we will work under the following assumption. 

\begin{assumption}
    \label{ass:elementary_extensions}
    Let $\fK \prec \fF \prec \fM$ be elementary extensions of polynomially bounded o-minimal expansions of the real closed fields $\KK \subset \FF \subset \MM$. Furthermore, assume that $\KK \subset \FF \subset \MM$ is an extension of valued real closed fields as in Assumption~\ref{assumtion:valued_fields}.
\end{assumption}

The \emph{tropicalization} of a definable set $X \subset \KK_{>0}^n$ is defined as
\begin{align}
\label{Def:Trop}
\Trop(X) \coloneqq \overline{ \val(X_{\fF})}
\end{align}
where the closure is taken in the euclidean topology of $\RR^n$. In the following, we investigate which points are added to $\val(X_\fF)$ when taking its closure.

\begin{proposition}
    \label{prop:ValImageClosedM}
    Let $\fF$ be a polynomially bounded o-minimal expansion of a real closed field $\FF$, equipped with nontrivial order compatible valuation $\val \colon \FF^\times \to \Gamma_\FF$. Then, for $X \subset \FF^n_{>0}$ definable, $\val(X)$ is a semilinear closed subset of $\Gamma^n_\FF$, and $\val(X) = \val(\cl{X})$ (where the closure is the relative closure of $X$ in $\FF_{>0}^n$).
\end{proposition}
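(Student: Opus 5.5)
Three things must be shown: (a) $\val(X)$ is semilinear; (b) $\val(X) = \val(\cl{X})$; (c) $\val(X)$ is closed in $\Gamma_\FF^n$. I would prove them in that order.

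\emph{Semilinearity.} I would use the induced structure $\Gamma_{\FF,\mathrm{vg}}$ from \Cref{subsec:van_den_Dries}. The valuation $\val\colon \FF_{>0}\to\Gamma_\FF$ is definable in the two-sorted T-convex structure $(\fF,\cO_\FF)$. By van den Dries' results, the image of a definable set under $\val$ is definable in the value group sort, which is o-minimal \cite[(4.5)~Cor.]{driesTConvexityTameExtensions1995}. By \cite[Th.~B]{driesTConvexityTameExtensions1997}, the definable sets there are exactly the semilinear sets over $\Lambda$. So $\val(X)\subset\Gamma_\FF^n$ is semilinear. The one point to check is that definability with parameters from $\FF$ transfers to parameters from $\Gamma_\FF$; this follows by passing to $\fF_\FF$.

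\emph{$\val(X)=\val(\cl{X})$.} The inclusion $\subseteq$ is trivial. For the converse, take $b\in\cl{X}$ with $\val(b)=w$. Write $b=(b_1,\dots,b_n)$ with all $b_i>0$. The set
\[ U=\prod_i\,]b_i(1-\tfrac12),\,b_i(1+\tfrac12)[ \]
is an open neighbourhood of $b$ in the strong topology. Every point $a\in U$ satisfies $a_i/b_i\in\,]1/2,3/2[$, so $a_i/b_i$ is a unit of $\cO_\FF$ and $\val(a_i)=\val(b_i)$. Since $b\in\cl{X}$, the set $U$ meets $X$, which gives a point of $X$ with valuation $w$. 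Hence $\val(\cl{X})=\val(X)$. By this step we may replace $X$ by $\cl{X}$ and assume $X$ is relatively closed in $\FF_{>0}^n$.

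\emph{Closedness; this is the main obstacle.} Let $w\in\Gamma^n_\FF$ lie in the closure of $\val(X)$. I would first reduce to a definable curve-selection statement in the value group sort. Since $\val(X)$ is semilinear, its closure is semilinear and has the usual o-minimal curve selection. So there is a definable (affine) path $\gamma\colon\,]0,\delta[\,\to\val(X)$ with $\gamma(\tau)\to w$. In particular, we may approach $w$ along a ray $w+\tau v$ with $\tau\to0^+$ in $\Gamma_\FF$.

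The task is then to lift this to $X$. I would use the definable set
\[ Y=\{(a,s)\in X\times\FF_{>0} : \val(a_i)\ \text{lies within}\ \val(s)\ \text{of}\ w_i\}. \]
To make $Y$ $\fF$-definable, replace the valuation conditions by conditions $t^{w_i}s^{-1}\cdot c<a_i<t^{w_i}s\cdot c^{-1}$ for suitable constants; this uses the splitting and the fact that $\Lambda\subset\Gamma_\FF$ (\Cref{rem:exponets_inside_value_group}). Then take a definable choice $a(s)\in X$ as $s\to 1^{+}$ along a chain of scales. Here I expect the core difficulty: a naive limit only gives points converging to $w$ in valuation, not a point with valuation exactly $w$.

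To get exactness, I would instead use the preparation theorem (\Cref{thm:preparation_theorem}) applied to the definable function $\tau\mapsto a(t^{\tau})$, or equivalently to a definable curve $a(s)$ in $X$ with $\val(a(s))\to w$. Preparation gives $a_i(s)=|s-r|^{\lambda_i}c_i u_i(s)$ near the endpoint, so $\val(a_i(s))$ is affine in $\val(s-r)$. The definable curve has a limit in $\FF^n\cup\{\infty\}$, by o-minimal monotonicity. If the limit lies in $\FF_{>0}^n$, it belongs to $X$ since $X$ is closed, and its valuation is $w$ by the neighbourhood argument above. If some coordinate tends to $0$ or $\infty$, then by polynomial boundedness its valuation tends to $\pm\infty$, or to the valuation $w_i$ achieved along the curve. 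In the latter case one can directly pick a curve point realizing $w$, because the affine form shows the valuation set along the curve is an interval in $\Gamma_\FF$ containing $w$ in its closure on the correct side. The delicate case analysis is making this endpoint argument work uniformly in all coordinates, and I expect to handle it via a cell decomposition of $X$ adapted to the valuative conditions.
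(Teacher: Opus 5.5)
\textbf{Parts (a) and (b).} Your semilinearity argument matches the paper. Your direct proof of $\val(X)=\val(\cl{X})$ is correct: every point of the box $\prod_i\,]b_i/2,3b_i/2[$ has the same valuation as $b$. It is also cleaner than the paper's route, which derives this equality from closedness together with continuity of $\val$.

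\textbf{The gap is in the closedness step.} You never turn ``valuations arbitrarily close to $w$'' into ``valuation exactly $w$''. There are three separate problems.
\begin{enumerate}
\item Taking a definable choice $a(s)\in Y_s$ ``as $s\to 1^+$'' presupposes that $Y_s\neq\emptyset$ for $s$ arbitrarily close to $1$. For such $s$ (and unit constants $c$), every point of $Y_s$ already has valuation exactly $w$. So this assumes the claim itself.
\item The map $\tau\mapsto a(t^{\tau})$ is not an $\fF$-definable function of a field variable. Its argument lives in $\Gamma_\FF$, and the splitting $\tau\mapsto t^\tau$ is not definable in $\fF$ (it would be an exponential). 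So the preparation theorem cannot be applied to it.
\item For a genuine $\fF$-definable curve, limits in the order topology of $\FF$ are the wrong notion. By monotonicity, as $s$ tends to a point of $\FF\cup\{\pm\infty\}$, each $\val(a_i(s))$ is either eventually constant (limit in $\FF_{>0}$, by your own box argument) or tends to $\pm\infty$ (limit $0$ or $\infty$). Your alternative ``or to $w_i$'' cannot occur. Hence a curve with $\val(a(s))\to w$ in this sense exists only if $w$ is already attained.
\end{enumerate}
The approximation of $w$ actually happens as $s$ approaches the non-definable cut $\{s>0 : s>\cO_\FF\}$, and your case analysis says nothing there. The closing appeal to a cell decomposition ``adapted to the valuative conditions'' is a promissory note, not an argument.

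\textbf{How to close it.} The missing idea is a one-variable overspill, and your set $Y$ is already the right object. Put
\[
S\coloneqq\{\, s\in\FF_{>1} \mid \exists a\in X \ \forall i\colon\ t^{w_i}s^{-1}<a_i<t^{w_i}s \,\}.
\]
This set is $\fF$-definable, with parameters $t^{w_i}\in\FF$, and it is upward closed.
\begin{enumerate}
\item $S$ contains every $s>1$ with negative valuation. Let $\val(s)=-\delta<0$ and pick $a\in X$ with $\abs{\val(a_i)-w_i}<\delta$ for all $i$. Order compatibility then gives $s^{-1}<t^{-w_i}a_i<s$.
\item By o-minimality, $S$ is either $\FF_{>1}$ or an interval $]p,\infty[$ or $[p,\infty[$ with $p\in\FF$.
\item $p\in\cO_\FF$. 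Otherwise $\val(p)<0$, and then $\sqrt{p}<p$ also has negative valuation, so it lies in $S$, a contradiction.
\item Hence $s=p+1$ (or $s=2$ if $S=\FF_{>1}$) is a unit lying in $S$. Any $a\in X$ witnessing $s\in S$ satisfies $\val(a_i)=w_i$ for all $i$, by order compatibility.
\end{enumerate}
This route needs only one-variable o-minimality of $\fF$ and order compatibility; it uses neither curve selection in $\Gamma_\FF$ nor preparation. The paper closes the same step differently: it lifts the selected $\Gamma_\FF$-curve to $\cl{X}$ via definable Skolem functions of the T-convex expansion $(\fF,\cO_\FF)$, and uses that definable functions on $\Gamma_\FF$ are affine.
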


\begin{proof}
    From \Cref{subsec:van_den_Dries}, $\val(X)$ is a semilinear subset  $\Gamma_\FF^n$.    
    By \cite[(4.5)~Cor.]{driesTConvexityTameExtensions1995}, the structure on $\Gamma_\FF$ induced by the valuation is o-minimal. Take $w$ to be in the closure of ${\val(X)}$ in $\Gamma_\FF^n$. By the o-minimal curve selection lemma, see e.g. \cite[Th.~3.2]{costeIntroductionOminimalGeometry1999}, there exists a continuous definable curve \[
        \phi \colon \Gamma_\FF \supset [0, v] \longrightarrow \val(X) \subset \Gamma^n_\FF
    \]
    such that $\phi(]0,v]) \subset X$ and $\phi(0) = w$. Up to extending the language of $(\fF, \cO_\FF)$ as in \cite[p.~21]{driesTConvexityTameExtensions1997}, we have definable Skolem functions \cite[Rem.~2.7]{driesTConvexityTameExtensions1997}, i.e. definable sections of maps always exist (see \cite[Ex.~2.5.25]{markerModelTheory2002} for the precise definition). Therefore there exists a continuous definable function \[f \colon  \FF \supset [0, t^v] \longrightarrow \cl{X} \] (where the closure is taken inside $\FF^n_{>0}$) such that $\val(f(a)) = \phi(\val(a))$ and $\phi(]0,t^v]) \subset X$. Definable sets in the value group are precisely $\Gamma$-affine $\Lambda$-integral functions; hence there exists $\lambda \in \Lambda^n$ and $b \in \Gamma_\FF^n$ such that, for all $\gamma \in [0, v]$ small enough, $\phi(\gamma) = \gamma \lambda + b$. But then, for all $a$ small enough:
    \[
        \val(a) \lambda + b = \phi(\val(a)) = \val(f(a)) \xrightarrow{a \to 0} \val(f(0)) \in \val(\cl{X}) \subset \Gamma_\FF^n
    \]
    Since $\val(a) \to \infty$ as $a \to 0$, continuity of $\phi$ forces $\lambda =0$.
    This means that the valuation $\val(f(a))$ is finally constant and equal to $w = \val(b)$. Hence there exists $a \in X$ such that $\val(a) = w$, proving that ${\val(X)}$ is closed.
    
    For the last statement, recall that $\cl{X}$ is definable and that $\val \colon \FF^n_{>0} \to \Gamma^n_\FF$ is continuous, and then $\val(X) = \overline{\val(X)} = \overline{\val(\cl{X})} = \val(\cl{X})$.
\end{proof}

We will need the following weak version of the strong density theorem, (c.f. \Cref{thm:strong_density}), which follows from quantifier elimination. \begin{proposition}
    \label{prop:density}
Let $\fF \prec \fM$ and $\FF \subset \MM$ be as in Assumption~\ref{ass:elementary_extensions}. Then, for $X \subset \FF^n$ definable:
    \begin{align*}
        \val(X_\fM) \cap \Gamma_{\FF}^n = \val(X) 
      \end{align*}
\end{proposition}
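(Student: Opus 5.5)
Both inclusions reduce to the fact that $\fF \prec \fM$ remains elementary once the convex valuation rings are added. For the easy inclusion $\val(X) \subset \val(X_\fM) \cap \Gamma_\FF^n$, note that $X \subset X_\fM$ because $\fF \prec \fM$. The valuation on $\MM$ restricts to the one on $\FF$, so every $w = \val(a)$ with $a \in X$ also lies in $\val(X_\fM)$, and it lies in $\Gamma_\FF^n$ by definition.

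For the reverse inclusion, fix $w \in \Gamma_\FF^n$ with $w = \val(b)$ for some $b \in X_\fM$; we must produce $a \in X$ with $\val(a) = w$. Using the splitting, write $w_i = \val(t^{w_i})$ with $t^{w_i} \in \FF^\times$. The condition $\val(b_i) = w_i$ is equivalent to $b_i/t^{w_i} \in \cO_\MM^\times$, i.e. $b_i t^{-w_i} \in \cO_\MM$ and $b_i^{-1} t^{w_i} \in \cO_\MM$. Consider the formula
\[
\exists x \,\Bigl( x \in X \wedge \bigwedge_{i=1}^n \bigl( \mathcal{O}(x_i t^{-w_i}) \wedge \mathcal{O}(x_i^{-1} t^{w_i}) \bigr) \Bigr),
\]
with parameters from $\FF$ (those defining $X$ and the elements $t^{w_i}$), in the language $\cL$ of $\fF$ expanded by a unary predicate $\mathcal{O}$ for the convex valuation ring. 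It holds in $(\fM, \cO_\MM)$, witnessed by $b$. If it also holds in $(\fF, \cO_\FF)$, any witness $a \in X$ satisfies $\val(a) = w$, which finishes the proof.

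The key step, and the main obstacle, is showing that $(\fF, \cO_\FF) \prec (\fM, \cO_\MM)$ as $\cL \cup \{\mathcal{O}\}$-structures. Here I would invoke van den Dries–Lewenberg. Since $\fF$ and $\fM$ are polynomially bounded, both valuation rings are $T$-convex by \cite[(4.2)~Prop.]{driesTConvexityTameExtensions1995}. Moreover, $\cO_\MM \cap \FF = \cO_\FF$ because the valuations are compatible. The theory of pairs $(T, \mathcal{O})$ with $\mathcal{O}$ a proper $T$-convex subring is model complete, and has quantifier elimination relative to $T$ in a suitable extended language \cite[(3.10)]{driesTConvexityTameExtensions1995}. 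Model completeness gives exactly that an inclusion of models $(\fF,\cO_\FF) \subset (\fM, \cO_\MM)$ with $\fF \prec \fM$ is elementary. The care needed is properness: $\cO_\FF$ is proper because the valuation on $\FF$ is nontrivial by Assumption~\ref{assumtion:valued_fields}, and $\cO_\MM$ is proper because $\Gamma_\MM = \RR$. We also need $\fF \prec \fM$ in $\cL$, which holds by Assumption~\ref{ass:elementary_extensions}, so that both $T$-reducts are models of the same complete theory $T = \mathrm{Th}(\fF_\FF)$ extended with parameters from $\FF$.

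If one prefers to avoid model completeness, an alternative route goes through \Cref{thm:preparation_theorem}. One would cell-decompose $X$ over $\FF$ and show by induction on $n$ that the fiberwise valuation image of each cell is described by semilinear data with parameters in $\Gamma_\FF$. A $\Gamma_\FF$-rational point of that semilinear set is then realized already in $\FF$. I would use the model-theoretic argument as the main proof and keep this as a backup.
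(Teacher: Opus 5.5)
Your proposal is correct and is essentially the paper's argument. Both transfer the existential statement ``some $x\in X$ has $\val(x)=w$'' from $(\fM,\cO_\MM)$ down to $(\fF,\cO_\FF)$ via the van den Dries--Lewenberg quantifier elimination and model completeness for $T$-convex pairs; your explicit formula with the predicate $\mathcal{O}$ makes that transfer step cleaner than in the paper. One small fix: take $T=\mathrm{Th}(\fF)$ in $\cL$ itself rather than naming all of $\FF$, because with every element of $\FF$ as a constant $\cO_\FF$ is not $T$-convex (it is not closed under the constant map $x\mapsto t^{-1}$), and parameters from $\FF$ are already covered by the definition of elementary extension.
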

\begin{proof}
    If a structure is polynomially bounded, then its convex valuation ring is $T$-convex in the sense of \cite{driesTConvexityTameExtensions1995,driesTConvexityTameExtensions1997}. Up to extending the language of valued real closed fields as in \cite[Rem.~3.11]{driesTConvexityTameExtensions1995}, the quantifier elimination theorem holds true \cite[Rem.~3.11]{driesTConvexityTameExtensions1995}, and then the set $\val(X_\fF) \subset \Gamma_\FF^n$ is defined by a quantifier-free formula. By model completeness, which follows from quantifier elimination (see e.g. \cite[Sec.~3.3 and Th.~3.4.3]{prestelMathematicalLogicModel2011}), quantifier free formulas which are true over $\fM$ are true over $\fF$. Hence, if $w \in \val(X_\fM) \cap \Gamma_\FF^n$ then there exists $a \in X$ such that $\val_\FF(a_i) = w_i$ for $i \in  [n]$.
\end{proof}

We finish this section by showing that no additional $\Gamma_\FF$-valued points are added to the tropicalization in~\eqref{Def:Trop} when taking the closure.

\begin{corollary}
       \label{cor:ValImageClosure}
      Let $\fF$ be a polynomially bounded o-minimal expansion of a real closed field $\FF$, equipped with a nontrivial order compatible valuation $\val \colon \FF^\times \to \Gamma_{\FF}$. For each definable set $X \subset \FF^n_{>0}$, we have
    \begin{align*}
        \val(X)  =  \overline{\val(X)}  \cap \Gamma_\mathbb{F}^n
      \end{align*}
      where the closure is taken in $\RR^n$.
\end{corollary}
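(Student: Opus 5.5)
The inclusion $\val(X) \subset \overline{\val(X)} \cap \Gamma_\FF^n$ is immediate, since $\val(X) \subset \Gamma_\FF^n$. The work is in the reverse inclusion: a point of $\Gamma_\FF^n$ in the euclidean closure of $\val(X)$ in $\RR^n$ must already lie in $\val(X)$. The plan is to reduce this to \Cref{prop:ValImageClosedM}, which says that $\val(X)$ is closed in $\Gamma_\FF^n$ for the order (box) topology of $\Gamma_\FF$.

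The key step is to compare the two topologies on $\Gamma_\FF^n$. The first is the subspace topology inherited from $\RR^n$. The second is the order topology of the ordered group $\Gamma_\FF$, which is the one used in \Cref{prop:ValImageClosedM}. Since the valuation is nontrivial and $\Gamma_\FF \subset \RR$, the group $\Gamma_\FF$ is a nontrivial subgroup of $\RR$, hence either dense in $\RR$ or cyclic.
\begin{itemize}
\item If $\Gamma_\FF$ is dense, the two topologies agree. The intervals $]a,b[ \cap \Gamma_\FF$ with $a,b \in \Gamma_\FF$ form a basis of both, because any real open interval meeting $\Gamma_\FF$ can be shrunk to one with endpoints in $\Gamma_\FF$.
\item If $\Gamma_\FF$ is cyclic, both topologies are discrete, so again they coincide.
\end{itemize}
In fact $\Gamma_\FF$ is divisible, because $\FF$ is real closed, so it is always dense; the cyclic case can be excluded outright.

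With the topologies identified, take $w \in \overline{\val(X)} \cap \Gamma_\FF^n$. Then $w$ lies in the closure of $\val(X)$ relative to $\Gamma_\FF^n$ with its euclidean subspace topology, since a closure intersected with a subspace containing the set is the relative closure. By the comparison above, this relative closure is the closure in the order topology of $\Gamma_\FF^n$. By \Cref{prop:ValImageClosedM}, $\val(X)$ is closed in that topology, so $w \in \val(X)$.

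The only point requiring care is matching the topology on $\Gamma_\FF$ used in \Cref{prop:ValImageClosedM} with the euclidean one. I expect this to be routine via density.
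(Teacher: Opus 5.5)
Your proof is correct, but it takes a different route from the paper. The paper never compares topologies on $\Gamma_\FF$. It passes to an elementary extension $\fF \prec \fM$ with $\Gamma_\MM = \RR$, whose existence it quotes from \cite[(4.4)~Prop.]{driesTConvexityTameExtensions1995}, and runs the chain
\[
\overline{\val(X)} \cap \Gamma_\FF^n = \overline{\val(X_\fM) \cap \Gamma_\FF^n} \cap \Gamma_\FF^n \subset \overline{\val(X_\fM)} \cap \Gamma_\FF^n \subset \val(X_\fM) \cap \Gamma_\FF^n = \val(X).
\]
The two equalities come from \Cref{prop:density}, which rests on quantifier elimination and model completeness for $T$-convex structures. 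The middle inclusion comes from \Cref{prop:ValImageClosedM} applied to $X_\fM$; there, closedness in $\Gamma_\MM^n = \RR^n$ is literally euclidean closedness.

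You instead apply \Cref{prop:ValImageClosedM} over $\FF$ itself. You observe that $\Gamma_\FF$ is a nontrivial divisible, hence dense, subgroup of $\RR$, so its order topology, and therefore the box topology on $\Gamma_\FF^n$, coincides with the subspace topology from $\RR^n$. The claim then follows from the point-set identity $\overline{A}^{\,Y} = \overline{A} \cap Y$ for $A \subset Y$.

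Your argument is more economical: it needs neither the auxiliary field $\MM$ nor the density proposition, only an elementary topological remark. The paper's detour avoids discussing the topology on $\Gamma_\FF$ at all. Along the way it also records $\overline{\val(X)} \subset \val(X_\fM)$, which anticipates item (iv) of \Cref{thm:fundamental_theorem}. For the corollary as stated, yours is the shorter proof. Your cyclic case is vacuous, as you note yourself, since $\Gamma_\FF$ is divisible.
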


\begin{proof}
    Let $\fF \prec \fM$ be an elementary extension equipped with a valuation extending the valuation of $\FF$ such that $\val(\MM^\times) = \RR$. Such an extension exists by \cite[(4.4)~Prop.]{driesTConvexityTameExtensions1995}. 
    The claim follows by considering the following chain of inclusions
    \begin{align*}
        \val(X) \subset \overline{\val(X)} \cap \Gamma_\FF^n  \overset{(1)}{=} \overline{ \val(X_\fM) \cap \Gamma_\FF^n } \cap \Gamma_\FF^n \subset \overline{ \val(X_\fM)} \cap \overline{\Gamma_\FF^n } \cap \Gamma_\FF^n \overset{(2)}{\subset}  \val(X_\fM) \cap  \Gamma_\FF^n \overset{(3)}{=} \val(X),
    \end{align*}
    where $(1)$ and $(3)$ follows from Proposition~\ref{prop:density} and $(2)$ holds by Proposition~\ref{prop:ValImageClosedM}.
\end{proof}

\subsection{Tropicalizations and the initial form of a polynomial}
\label{Section:TropofDefFunctions}
In this section, we assume that $\FF$ is a nontrivially valued real closed field and $\Lambda \subset \RR^n$ is a field of exponents.
The \emph{real tropicalization} and the \emph{tropicalization} of a polynomial $f = \sum_\lambda c_\lambda \vb x^\lambda \in \FF[\vb x^\Lambda]$ are respectively defined as 
\begin{align*}
   \trop_r(f) &\coloneqq \bigoplus_\lambda \left( \sign(c_\lambda), \val(c_\lambda) \right) \odot \overline{\vb w}^\lambda \in \mathbb{RT}_\FF[\overline{\vb w}^\Lambda] \\
   \trop(f) & \coloneqq \bigoplus_\lambda \val(c_\lambda) \odot \vb w^\lambda \in \mathbb{T}_\FF[\vb w^\Lambda]
\end{align*}
where the addition and multiplications are interpreted in the hyperfields $\mathbb{RT}_\FF$ and $\mathbb{T}_\FF$ respectively, cf. Section~\ref{Sec:Hyperfields}.
Following \cite[Section 5.2]{jellRealTropicalizationAnalytification2020}, we write
\begin{equation}
\label{Eq:OrderRealTrop}
\begin{aligned}
 \trop_r(f)(\overline{w}) < 0 :\Longleftrightarrow \trop_r(f)(\overline{w}) \cap ( \{1\} \times \Gamma \cup \{\infty\} ) = \emptyset, \\
    \trop_r(f)(\overline{w}) \geq 0 :\Longleftrightarrow \trop_r(f)(\overline{w}) \cap ( \{1\} \times \Gamma \cup \{\infty\} ) \neq \emptyset.
\end{aligned}
\end{equation}
These conventions are chosen precisely so that  $ \trop_r(f)(\overline{w}) \geq 0 $ and $ \trop_r(f)(\overline{w}) < 0 $  are logical negations of each another.
 In light of these definitions, it is natural to introduce the \emph{RV-tropicalization} of $f$ as
\begin{align}
\label{Eq:RVPolynomials}
 \trop_{\rv}(f)(z) \coloneqq  \bigoplus_\lambda \rv(c_\lambda) \odot  z^\lambda 
 =  \bigoplus_\lambda (\ac(c_\lambda),\val(c_\lambda)) \odot  z^\lambda \in \RV_\FF[\vb z^\Lambda]
 \end{align}
where now we interpret the addition and the multiplication in the $\RV$-tropical hyperfield. Similarly to \eqref{Eq:OrderRealTrop}, using the ordering on $\RV_\FF$ discussed in Section~\ref{Sec:RealClosedValuedFields}, for $z \in \RV_\FF$ we have 
\begin{align*}
   \trop_{\rv}(f)(z) < 0 \Longleftrightarrow \trop_{\rv}(f)(z) \cap ( k_{\FF,>0}  \times \Gamma_\FF \cup \{0\} ) = \emptyset, \\
    \trop_{\rv}(f)(z) \geq 0 \Longleftrightarrow \trop_{\rv}(f)(z) \cap ( k_{\FF,>0} \times \Gamma_\FF \cup \{0\} ) \neq \emptyset.
\end{align*}

\begin{lemma}
\label{Prop:PolyRVTrop}
    Consider the hyperfield morphism $\rv\colon \mathbb{F} \to \RV_\FF$
    from Lemma~\ref{Lemma:HyperfieldMorphisms} and extend this  coordinate wise to $\rv\colon \mathbb{F}_{>0}^n \to \RV_{\FF,>0}^n$.
   For a polynomial $f = \sum_\lambda c_\lambda \vb x^\lambda \in \FF[\vb x^\Lambda]$ and for each $z \in \RV_{\FF,>0}^n$, we have
    \begin{align*}
         \rv\left( f\left( \rv^{-1}(z) \right) \right) {\subset} \trop_{\rv}(f)(z).
    \end{align*}
\end{lemma}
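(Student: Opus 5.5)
Fix $z \in \RV_{\FF,>0}^n$ and $a \in \rv^{-1}(z)$, so $a \in \FF_{>0}^n$ with $\rv(a_i) = z_i$ for every $i \in [n]$. We must show $\rv(f(a)) \in \trop_{\rv}(f)(z)$. The argument is the standard hyperfield-morphism argument, applied monomial by monomial and then to the finite sum.

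First, each monomial is handled multiplicatively. For a real exponent $\lambda_i \in \Lambda$ and $a_i>0$ we need $\rv(a_i^{\lambda_i}) = \rv(a_i)^{\lambda_i}$, where $z^\lambda$ is computed in $\RV_\FF$ via the identification $\RV^\times_\FF \cong k^\times_\FF\times\Gamma_\FF$, i.e. $(y,w)^\lambda = (y^\lambda, \lambda w)$ with $y^\lambda$ taken in the induced structure on $k_\FF$. Write $a_i = t^{w_i} u_i$ with $u_i \in \cO_\FF^\times$, $u_i>0$. Then $a_i^{\lambda_i} = t^{\lambda_i w_i} u_i^{\lambda_i}$: here we use that the splitting can be taken compatible with power functions, or at least that $\val(t^{w}{}^{\lambda}) = \lambda w$ by \Cref{rem:exponets_inside_value_group}. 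We also need $\res(u^\lambda) = \res(u)^\lambda$. This follows because power functions are $\emptyset$-definable continuous functions and $\cO_\FF$ is $T$-convex, so the residue map commutes with them by \cite{driesTConvexityTameExtensions1995}. Concretely, $u^\lambda \in 1+\m_\FF$ whenever $u \in 1+\m_\FF$, since $u^\lambda$ is close to $1$ by polynomial boundedness and $T$-convexity. Hence $\rv$ is multiplicative on monomials:
\[
\rv(c_\lambda a^\lambda) = \rv(c_\lambda)\odot z^\lambda .
\]

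Second, we pass to the sum. Since $\rv\colon\FF\to\RV_\FF$ is a hyperfield morphism (\Cref{Lemma:HyperfieldMorphisms}), we have $\rv(b_1+b_2)\in \rv(b_1)\oplus\rv(b_2)$. Induction on the number of terms, using the inductive definition of the iterated hypersum, gives
\[
\rv\Bigl(\sum_\lambda b_\lambda\Bigr) \in \bigoplus_\lambda \rv(b_\lambda).
\]
The inductive step is: $\rv(s+b_m) \in \rv(s)\oplus\rv(b_m)$, and $\rv(s)$ lies in the $(m-1)$-fold hypersum by the inductive hypothesis, so $\rv(s+b_m)$ lies in the union defining the $m$-fold hypersum. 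Applying this with $b_\lambda = c_\lambda a^\lambda$ and the first step yields $\rv(f(a)) \in \trop_{\rv}(f)(z)$. This covers $f(a)=0$ as well, since $0$ is allowed in hypersums. As $a$ was an arbitrary preimage of $z$, the inclusion follows.

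The main obstacle is the first step, namely compatibility of $\rv$ with real exponents. Both $t^{w\lambda}$ versus $(t^w)^\lambda$ and $\res(u^\lambda)$ depend on how the splitting and the induced structure on $k_\FF$ interact with power functions. To avoid assuming a power-compatible splitting, I would argue directly with $1+\m_\FF$. Since $\rv(a_i)$ is fixed, any two preimages differ by a factor in $1+\m_\FF$, and $(1+\m_\FF)^\lambda \subset 1+\m_\FF$. So $\rv(a^\lambda)$ depends only on $z$, and we may simply define $z^\lambda$ as this value, which agrees with $(y^\lambda,\lambda w)$ under the fixed identification.
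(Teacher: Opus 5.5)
Your proposal is correct and follows the paper's own argument. The paper also uses that $\rv$ is a hyperfield morphism, so $\rv\bigl(\sum_\lambda c_\lambda a^\lambda\bigr)\in\bigoplus_\lambda \rv(c_\lambda a^\lambda)$, together with $\rv(c_\lambda a^\lambda)=\rv(c_\lambda)\odot\rv(a)^\lambda$. You just spell out the induction over the number of terms and the monomial step, which the paper takes for granted.

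One small caveat concerns your closing claim that the canonically defined $z^\lambda=\rv(a^\lambda)$ agrees with $(y^\lambda,\lambda w)$. This needs the splitting to satisfy $(t^w)^\lambda=t^{\lambda w}$, which you can arrange by choosing it $\Lambda$-linear, since $\val\colon(\FF_{>0},\cdot)\to\Gamma_\FF$ is a surjective $\Lambda$-linear map. The paper makes the same assumption tacitly, e.g.\ in \Cref{prop:InitilFormPolynomials}.
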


\begin{proof}
Let $u \in \rv(f(\rv^{-1}(z)))$. Hence there exists $a \in \rv^{-1}(z)$ such that
\[
    u = \rv (f(a)) = \rv(\sum_{\lambda} c_\lambda a^\lambda) \subset \bigoplus_\lambda \rv(c_\lambda a^\lambda) = \bigoplus_\lambda \rv(c_\lambda) \odot \rv(a)^\lambda = \trop_{\rv}(z)
\]
where the inclusion follows since $\rv$ is a hyperfield morphism.
\end{proof}

One might be tempted to believe that the converse inclusion holds in Lemma~\ref{Prop:PolyRVTrop}. The following example shows that this is not the case.
\begin{example}
\label{Ex:SmoothConditions}
    Consider the polynomial $f = (x-1)^2 = 1 - 2x +x^2 \in \FF[x]$. Since $f(a) \geq 0$ for all $a\in\FF_{>0}^n$, we have $\rv(f(\rv^{-1}(z))) \subset \RV_{\FF,>0} \cup \{0\}$ for each $z \in \RV_{\FF,>0}$. On the other hand, consider the RV-tropicalization
    \[\trop_{\rv}(f)(z) = (1,0) \odot z^0  \oplus (-2,0) \odot z \oplus (1,0)\odot z^2.\]
    Evaluating this polynomial at $z = (1,0)$ we have 
 \[\trop_{\rv}(f)(1,0) = (1,0) \oplus (-2,0) \oplus (1,0) =  \left\{\,(y, v) \mid v\in \Gamma_\FF,\, v>0,\, y \in k^{\times}_\FF\,\right\} \cup\{0\},\]
 which strictly contains the set $\RV_{\FF,>0} \cup \{0\}=(k_{\FF,>0} \times \Gamma_\FF) \cup \{0\}$.
\end{example}

\begin{definition}
The \emph{initial form} of a polynomial $f = \sum_{\lambda}c_{\lambda}x^\lambda \in \mathbb{F}[\vb x^\Lambda]$ is defined as  
\begin{align*}
    \initial_w(f)(y) \coloneqq \sum_{\substack{\lambda \in \Lambda^n\\ \val(c_\lambda) + w\cdot \lambda = W}} \res_{\mathbb{F}}\big( c_\lambda t^{-\val(c_\lambda)}\big)\;y^\lambda \in k_\FF[\vb y^\Lambda], \quad  \text{ where } W =  \min\{\, \val(c_\lambda) + w \cdot \lambda \mid \lambda \in \Lambda^n, \, c_\lambda \neq 0\,\}.\end{align*}
\end{definition}

To study initial forms and initial degenerations, it is convenient to consider the twist of an element $b \in \FF^n_{>0}$
 by a vector $w \in \Gamma_\FF^n$. To that end, we introduce the following notation.
\[t^w.b \coloneqq (t^{w_1}b_1,\dots,t^{w_n}b_n ) \in \FF^n_{>0}.\]
For a set $X \subset \FF^n_{>0}$, we also write $t^w.X \coloneqq \{ \, t^w.b \mid b \in X \, \}$.
In the next lemma, we give a description of initial forms using the angular component map. Switching between these two descriptions will be useful for relating initial forms to $\RV$-tropicalizations.

\begin{lemma}
\label{prop:InitilFormPolynomials}
    Let $f = \sum_{\lambda}c_{\lambda}x^\lambda \in \mathbb{F}[\vb x^\Lambda]$ be a polynomial and $W \coloneqq \min\{\, \val(c_\lambda) + w \cdot \lambda \mid \lambda \in \Lambda^n, \, c_\lambda \neq 0\,\}$. For all $y \in k_{\FF,>0}^n$ and $b \in \mathcal{O}^n_{\FF}$ with $\res_\FF(b) = y$, we have
\begin{align*}
    \initial_w(f)(y) \;=\; \res_{\mathbb{F}}\big(t^{-W} f(t^w.b )\big) \; =
    \sum_{\substack{\lambda \in \Lambda^n\\ \val(c_\lambda) + w\cdot \lambda = W}} \ac_\FF ( c_\lambda)\;y^\lambda.
\end{align*}
\end{lemma}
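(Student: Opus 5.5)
The plan is to expand $t^{-W} f(t^w.b)$ term by term and apply $\res_\FF$ termwise, using that $\res_\FF$ is a ring homomorphism on $\cO_\FF$ which commutes with real powers on units. First, the rightmost equality is immediate. By definition $\ac_\FF(c_\lambda) = \res_\FF(t^{-\val(c_\lambda)} c_\lambda)$, which is exactly the coefficient appearing in $\initial_w(f)$. So the work is in the middle equality.

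Write
\[
t^{-W} f(t^w.b) = \sum_\lambda c_\lambda t^{-W} t^{w\cdot\lambda} b^\lambda = \sum_\lambda \big(c_\lambda t^{-\val(c_\lambda)}\big)\, t^{\val(c_\lambda)+w\cdot\lambda - W}\, b^\lambda .
\]
Here the splitting being a homomorphism gives $(t^{w_i})^{\lambda_i} = t^{w_i\lambda_i}$. This needs $t^{\lambda \gamma} = (t^\gamma)^\lambda$, i.e. compatibility of the splitting with the power functions of $\fF$; I would check this via \Cref{rem:exponets_inside_value_group} or, if necessary, simply interpret $t^{w\cdot\lambda}$ as $\prod_i (t^{w_i})^{\lambda_i}$, which has valuation $w\cdot\lambda$.

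In each summand:
\begin{itemize}
\item $c_\lambda t^{-\val(c_\lambda)}$ is a unit of $\cO_\FF$ with residue $\ac_\FF(c_\lambda)$.
\item The factor $t^{\val(c_\lambda)+w\cdot\lambda-W}$ has nonnegative valuation. Its residue is $1$ when the exponent is zero, and it lies in $\m_\FF$ otherwise.
\item $b^\lambda$ is a unit with $\res_\FF(b^\lambda) = y^\lambda$.
\end{itemize}
For the last point, since $y\in k_{\FF,>0}^n$, each $b_i$ is a positive unit, so $b_i^{\lambda_i}$ is defined. The identity $\res_\FF(b_i^{\lambda_i}) = y_i^{\lambda_i}$ uses the inherited $\cL$-structure $\mathfrak{k}_\fF$ on the residue field (\Cref{subsec:van_den_Dries}). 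Write $b_i = \beta_i(1+\epsilon_i)$ with $\beta_i$ in a maximal subfield of $\cO_\FF$ identified with $k_\FF$ and $\epsilon_i \in \m_\FF$. Then $b_i^{\lambda_i} = \beta_i^{\lambda_i}(1+\epsilon_i)^{\lambda_i}$. Moreover $(1+\epsilon_i)^{\lambda_i} \in 1+\m_\FF$, because $(1+\epsilon)^\lambda$ is squeezed between $(1+\epsilon)^{\pm N}$ for an integer $N>|\lambda|$ by monotonicity of power functions, and convexity of $1+\m_\FF$ does the rest.

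Summing over the finitely many $\lambda$, the terms with $\val(c_\lambda)+w\cdot\lambda > W$ vanish under $\res_\FF$. The remaining ones give $\sum \ac_\FF(c_\lambda) y^\lambda$, which is $\initial_w(f)(y)$.

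The main obstacle I expect is the careful justification of residues of real powers: $\res_\FF(b^\lambda) = \res_\FF(b)^\lambda$, and the consistency of $t^w$ raised to $\lambda$ with the splitting. I would handle the first by the monotonicity/convexity argument above, and the second by reducing to $\val$ and the power-function identities $(rs)^\lambda = r^\lambda s^\lambda$.
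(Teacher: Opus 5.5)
Your computation is the one the paper gives. You expand $t^{-W}f(t^w.b)$ monomial by monomial, discard the terms with $\val(c_\lambda)+w\cdot\lambda>W$ because they lie in $\m_\FF$, and read off $\ac_\FF(c_\lambda)y^\lambda$ from the remaining ones. Your squeezing argument for $(1+\epsilon)^\lambda\in 1+\m_\FF$ correctly justifies $\res_\FF(b^\lambda)=\res_\FF(b)^\lambda$, which the paper uses without comment. For the step $\res_\FF(\beta_i^{\lambda_i})=y_i^{\lambda_i}$, choose the subfield of $\cO_\FF$ to be an elementary substructure of $\fF$, as in the construction of $\mathfrak{k}_\fF$, so that it is closed under $x^{\lambda_i}$.

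The compatibility point you flag, however, is a genuine gap, and neither of your two remedies closes it. \Cref{rem:exponets_inside_value_group} only gives $\val\big((t^{w_i})^{\lambda_i}\big)=\lambda_i w_i$, not the equality of elements $(t^{w_i})^{\lambda_i}=t^{\lambda_i w_i}$. Reinterpreting $t^{w\cdot\lambda}$ as $\prod_i (t^{w_i})^{\lambda_i}$ only moves the problem. The factors $t^{-W}$ and $t^{-\val(c_\lambda)}$ in the statement are still values of the splitting. So for the monomials with $\val(c_\lambda)+w\cdot\lambda=W$, the factor you claim has residue $1$ actually has residue $\prod_i \ac_\FF\big((t^{w_i})^{\lambda_i}\big)$.

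For a splitting that is merely a group homomorphism, as in Assumption~\ref{assumtion:valued_fields}, this residue need not be $1$ once $\Lambda\neq\QQ$. Take $\lambda\in\Lambda\setminus\QQ$. Then $1$ and $\lambda$ are $\QQ$-independent in $\Gamma_\FF$, so you may multiply a given splitting by a lift to units of a homomorphism $\Gamma_\FF\to k_{\FF,>0}$ with $1\mapsto 1$ and $\lambda\mapsto 2$. This leaves $t$ unchanged but replaces $t^\lambda$ by $2t^\lambda$. Now take $f=x^\lambda$ and $w=1$. The two splittings give values of $\res_\FF\big(t^{-\lambda}(tb)^\lambda\big)$ differing by a factor $2$, so they cannot both equal $\initial_1(f)(y)=y^\lambda$.

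So the lemma needs the normalization $t^{\lambda w}=(t^w)^\lambda$ for all $\lambda\in\Lambda$ and $w\in\Gamma_\FF$. The paper's proof uses it tacitly when it writes $\res_\FF(c_\lambda t^{w\cdot\lambda-W})\res_\FF(b)^\lambda$. You should impose it as a convention rather than try to derive it. Such a splitting exists: make $(\FF_{>0},\cdot)$ a $\Lambda$-vector space via $a\mapsto a^\lambda$. Then $\val\colon\FF_{>0}\to\Gamma_\FF$ is a $\Lambda$-linear surjection, since $\val(a^\lambda)=\lambda\val(a)$, and so it has a $\Lambda$-linear section. When $\Lambda=\QQ$ every splitting satisfies the normalization automatically. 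With this convention added, your proof is complete.
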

\begin{proof}
The first equality is well-known in the tropical geometry literature, and it follows directly from
\[   \initial_w(f)(y) =    \sum_{\substack{\lambda \in \Lambda^n\\ \val(c_\lambda) + w\cdot \lambda = W}}  \res_{\mathbb{F}}\big( c_\lambda t^{w\cdot \lambda - W} \big) \res_{\mathbb{F}}( \;b)^\lambda  = \sum_{\lambda \in \Lambda^n}  \res_{\mathbb{F}}\big( c_\lambda t^{w\cdot \lambda - W} \big) \res_{\mathbb{F}}( \;b^\lambda ) = \res_{\mathbb{F}}\big(t^{-W} f(t^w.b )\big)\]
since $\res_{\mathbb{F}}\big( c_\lambda t^{w\cdot \lambda - W} \big) =0$ for all $\lambda \in \Lambda^n$ with $w\cdot \lambda - W >0$.

The second equality follows by observing that $\ac_\FF(c_\lambda) = \res_\FF(t^{-\val(c_\lambda)}c_\lambda)$.
\end{proof}

The initial form and the $\RV$-tropicalization of a polynomial are related as follows.

\begin{lemma}
\label{Lemma:RVvsInitialForms}
Let $f = \sum_{\lambda \in \Lambda^n} c_\lambda x^\lambda \in \FF[\vb x^\Lambda]$ be a polynomial and $W \coloneqq \min\{\, \val(c_\lambda) + w \cdot \lambda \mid \lambda \in \Lambda^n, \, c_\lambda \neq 0\,\}$. For each $(y,w) \in \RV_{\FF,>0}^n$, we have
\begin{align*}
    \trop_{\rv}(f)(y,w) = \begin{cases}
        \{\,(\initial_w(f)(y),W) \,\} \quad &\text{ if } \initial_w(f)(y) \neq 0 \\
        \{(s,v) \mid v \in \Gamma_\FF, v > W, s \in k^\times_\FF \} \cup \{0\} \quad &\text{ if }  \initial_w(f)(y) = 0.
    \end{cases}
\end{align*}
\end{lemma}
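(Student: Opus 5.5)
The plan is to expand $\trop_{\rv}(f)(y,w)$ term by term in the $\RV$-tropical hyperfield and then collapse the hypersum using Lemma~\ref{Lemma:TropExtensionSumMinimum}.

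\textbf{Step 1: each term.} For each $\lambda$ with $c_\lambda \neq 0$, I compute
\[
\rv(c_\lambda)\odot (y,w)^\lambda = (\ac_\FF(c_\lambda)\,y^\lambda,\ \val(c_\lambda)+w\cdot\lambda).
\]
This uses that $(y,w)^\lambda = (y^\lambda, w\cdot\lambda)$ is taken componentwise in $\RV_{\FF,>0}^n \cong k_{\FF,>0}^n\times\Gamma_\FF^n$. Real exponents $\lambda \in \Lambda^n$ make sense here because $y$ is positive, and because $\RV^\times_{\FF,>0}$ is identified with $k_{\FF,>0}\times \Gamma_\FF$ through the isomorphism \eqref{Eq:RVIso}, which respects power functions. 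One should check this: for $a = u t^{w}$ we have $a^\lambda = u^\lambda t^{w\lambda}$, and $\res(u^\lambda) = \res(u)^\lambda$, since the structure on $k_\FF$ is induced from $\fF$ by $T$-convexity.

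\textbf{Step 2: reduce to the minimal terms.} By Lemma~\ref{Lemma:TropExtensionSumMinimum}, the hypersum equals the hypersum over only those $\lambda$ with $\val(c_\lambda)+w\cdot\lambda = W$. All of these terms have second coordinate $W$.

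\textbf{Step 3: add terms of equal value.} I claim, by induction on the number $m$ of terms, that
\[
\bigoplus_{i}(x_i,W) = \begin{cases} \{(\sum_i x_i, W)\} & \text{if } \sum_i x_i \neq 0,\\ \{(s,v)\mid v>W,\ s\in k^\times_\FF\}\cup\{0\} & \text{if } \sum_i x_i = 0. \end{cases}
\]
For the inductive step I add $(x_m,W)$ to the previous hypersum and split into cases.

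If the partial sum $S=\sum_{i<m}x_i$ is nonzero, the previous hypersum is the singleton $(S,W)$, and Definition~\ref{Def:HyperAddition}(c) gives the claim directly.

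If $S=0$, the previous hypersum is $Z \coloneqq \{(s,v)\mid v>W\}\cup\{0\}$, and there are two subcases. Note that then $\sum_{i\le m} x_i = x_m \neq 0$.
\begin{itemize}
\item For $(s,v)\in Z$ with $v>W$, we get $(s,v)\oplus(x_m,W)=(x_m,W)$.
\item For the element $0$, we get $0\oplus(x_m,W) = \{(x_m,W)\}$.
\end{itemize}
So the result is the singleton $\{(x_m,W)\}$, as claimed.

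\textbf{Step 4: identify the total sum.} By Lemma~\ref{prop:InitilFormPolynomials}, $\sum_{\lambda:\ \val(c_\lambda)+w\cdot\lambda=W}\ac_\FF(c_\lambda)y^\lambda = \initial_w(f)(y)$. This gives the two cases of the statement.

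The main obstacle is the bookkeeping in the degenerate case of Step 3, where the partial sum is zero but the full sum is not. This is exactly where the hyperaddition needs care: the multivalued set $Z$ has to collapse back to a singleton after adding $(x_m,W)$. The case $m=1$ (a single minimal term, whose sum is automatically nonzero) is trivial. A minor point is verifying Step 1's compatibility of $\rv$ with real powers.
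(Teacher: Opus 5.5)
Your proposal is correct and follows the paper's proof: expand termwise, keep only the terms with $\val(c_\lambda)+w\cdot\lambda=W$ via Lemma~\ref{Lemma:TropExtensionSumMinimum}, evaluate the hypersum of equal-value terms, and recognize $\sum_{\val(c_\lambda)+w\cdot\lambda=W}\ac_\FF(c_\lambda)y^\lambda=\initial_w(f)(y)$. Your Step~3 induction spells out the equal-value hypersum formula that the paper states without proof. The power-compatibility check in Step~1 is not needed, because $(y,w)^\lambda=(y^\lambda,w\cdot\lambda)$ is computed coordinatewise in $k_\FF\rtimes\Gamma_\FF$ by definition; moreover, the identity $(t^{w})^\lambda=t^{\lambda w}$ you invoke there holds for a splitting chosen compatibly with the power functions, but not for an arbitrary group-homomorphism splitting.
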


\begin{proof}
Using the definition of the hyperfield operations in $\RV_\FF$ from Example~\ref{Examples:Hyperadditions}, we rewrite $\trop_{\rv}(f)(y,w)$:
    \begin{align*}
    \trop_{\rv}(f)(y,w) &=  \bigoplus_{\lambda \in \Lambda^n} (\ac_\FF(c_\lambda),\val(c_\lambda)) \odot (y,w)^\lambda = \bigoplus_{\lambda\in \Lambda^n}  (\ac_\FF(c_\lambda) y^\lambda, \val(c_\lambda) + \lambda \cdot w) \\
    &\overset{(*)}{=} \bigoplus_{\substack{\lambda \in \Lambda^n\\ \val(c_\lambda) + w\cdot \lambda = W}} (\ac_\FF(c_\lambda) y^\lambda, W) \\ &= \begin{cases}
       \{\, ( \sum_{\val(c_\lambda) + w\cdot \lambda = W} \ac_\FF ( c_\lambda)\;y^\lambda, W) \, \} \quad & \text{ if } \sum_{\val(c_\lambda) + w\cdot \lambda = W} \ac_\FF ( c_\lambda)\;y^\lambda \neq 0 \\
        \{(s,v) \mid v \in \Gamma_\FF, v > W, s \in k^\times_\FF \} \cup \{0\}, \quad & \text{ if } \sum_{\val(c_\lambda) + w\cdot \lambda = W} \ac_\FF ( c_\lambda)\;y^\lambda = 0
    \end{cases}
\end{align*}
The equality $(*)$ follows from Lemma~\ref{Lemma:TropExtensionSumMinimum}. Using Lemma~\ref{prop:InitilFormPolynomials}, we conclude the proof by observing that  $\initial_w(f)(y) = \sum_{\val(c_\lambda) + w\cdot \lambda = W} \ac_\FF ( c_\lambda)\;y^\lambda $.
\end{proof}

We conclude this section with a technical result that will be used in the proofs of \Cref{Thm:WeakFundamentalFineTrop,Thm:WeakFundamentalTrop}.

\begin{proposition}
\label{Eq:TropPrecludingPosValues}
\label{Lemma:TropPrecludingPosValues}
Let $f = \sum_{\lambda \in \Lambda^n} c_\lambda x^\lambda \in \FF[\vb x^\Lambda]$ be a polynomial and let $w \in \Gamma_\FF^n$.
\begin{itemize}
    \item[(a)]  For all $(y,w) \in \RV_{\FF,>0}^n$, we have $\trop_{\rv}(f)(y,w) \geq  0$ if and only if $\initial_w(f)(y) \geq 0$.
\item[(b)]  If  $\trop_{\rv}(f)(y,w) \geq 0$ for some $y \in k_{\FF,>0}^n$, then $\trop_r(f)(1,w) \geq 0$.
    \item[(c)]  If $\trop_r(f)(1,w) < 0$, then $\trop_{\rv}(f)(y,w) < 0$ for all $y \in k_{\FF,>0}^n$.
\end{itemize}
\end{proposition}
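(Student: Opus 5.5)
The plan is to derive all three parts from \Cref{Lemma:RVvsInitialForms}, together with the hyperfield morphism $\phi\colon \RV_\FF \to \mathbb{RT}_\FF$ from Lemma~\ref{Lemma:HyperfieldMorphisms}.

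For (a), fix $(y,w)$ with $y \in k_{\FF,>0}^n$ and apply \Cref{Lemma:RVvsInitialForms}. There are two cases.
\begin{itemize}
\item If $\initial_w(f)(y) \neq 0$, then $\trop_{\rv}(f)(y,w)$ is the singleton $\{(\initial_w(f)(y),W)\}$. This singleton meets $k_{\FF,>0}\times\Gamma_\FF \cup \{0\}$ exactly when $\initial_w(f)(y) > 0$. Since $\initial_w(f)(y)\neq 0$, this is the same as $\initial_w(f)(y)\ge 0$.
\item If $\initial_w(f)(y) = 0$, then $\trop_{\rv}(f)(y,w)$ contains $0$, so it is $\geq 0$. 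On the other side, $\initial_w(f)(y)=0\ge 0$ trivially.
\end{itemize}
So the two conditions agree in both cases.

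For (b), I would use that $\phi$ is a hyperfield morphism and extend $\phi \odot$-multiplicatively and $\oplus$-additively to polynomial evaluations. For $y\in k_{\FF,>0}^n$ we have $\phi(y,w) = (1,w)$ coordinatewise, because $\sign(y_i)=1$. Also $\phi(\rv(c_\lambda)) = \sval(c_\lambda)$ by commutativity of \eqref{Eq:HyperfieldMaps}. The inclusion $\phi(x\oplus y)\subset \phi(x)\oplus\phi(y)$ extends by induction to iterated hypersums, using the inductive definition of $x_1\oplus\dots\oplus x_m$ and distributivity/multiplicativity. This gives
\[
\phi\bigl(\trop_{\rv}(f)(y,w)\bigr) \subset \trop_r(f)(1,w).
\]
Now suppose $\trop_{\rv}(f)(y,w) \geq 0$. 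Then it contains an element of $k_{\FF,>0}\times\Gamma_\FF \cup\{0\}$. Its image under $\phi$ lies in $\{1\}\times\Gamma_\FF\cup\{\infty\}$, so $\trop_r(f)(1,w)\geq 0$.

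The induction in this step is the only mildly delicate point. If it turns out to be awkward, I would instead compute $\trop_r(f)(1,w)$ directly through Lemma~\ref{Lemma:TropExtensionSumMinimum}. By that lemma it equals the hypersum of the terms $(\sign(c_\lambda),W)$ with $\val(c_\lambda)+w\cdot\lambda=W$. This hypersum is $<0$ only if all these signs are $-1$. In that case every coefficient $\ac_\FF(c_\lambda)$ is negative and every $y^\lambda$ is positive, so $\initial_w(f)(y)<0$, and (a) gives $\trop_{\rv}(f)(y,w)<0$. This contrapositive route is short and probably cleaner.

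For (c), note that it is the contrapositive of (b), using that $\geq 0$ and $<0$ are logical negations of each other by the conventions in \eqref{Eq:OrderRealTrop} and its RV analogue.
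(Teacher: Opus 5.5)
Your proposal is correct and follows essentially the paper's own proof. Part (a) is the same case split via \Cref{Lemma:RVvsInitialForms}, part (b) pushes $\trop_{\rv}(f)(y,w)$ through the hyperfield morphism $\phi$ into $\trop_r(f)(1,w)$, and part (c) is the logical negation of (b). Your fallback route via \Cref{Lemma:TropExtensionSumMinimum} is also valid, since mixed signs among the minimal terms force $\infty \in \trop_r(f)(1,w)$, and it has the merit of avoiding the iterated-hypersum inclusion that the paper uses without comment.
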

\begin{proof}
We prove (a) by applying repeatedly \Cref{Lemma:RVvsInitialForms}. Let $W = 
\min\{\, \val(c_\lambda) + w \cdot \lambda \mid \lambda \in \Lambda^n, \, c_\lambda \neq 0\,\}$.
If $\initial_w(f)(y) > 0$, then $ \trop_{\rv}(f)(y,w) = \{\,(\initial_w(f)(y),W) \,\} $ by \Cref{Lemma:RVvsInitialForms}, and $\trop_{\rv}(f)(y,w) > 0$.
If $\initial_w(f)(y) =0$, using again \Cref{Lemma:RVvsInitialForms} we have that  $\trop_{\rv}(f)(y,w) \cap (k_{\FF,>0} \times \Gamma_\FF \cup \{0\}) \neq \emptyset$, which by definition is equivalent to $\trop_{\rv}(f)(y,w) \geq 0$.
Now assume that $\trop_{\rv}(f)(y,w) \geq 0$. If $0 \in \trop_{\rv}(f)(y,w)$, then \Cref{Lemma:RVvsInitialForms} implies that $\initial_w(f)(y) = 0$.
If $0 \notin \trop_{\rv}(f)(y,w)$, then again by \Cref{Lemma:RVvsInitialForms} we have $\initial_w(f)(y) > 0$.

To prove part (b), consider the hyperfield morphism $\phi \colon \RV_\FF \to \mathbb{RT}_\FF$ from \Cref{Lemma:HyperfieldMorphisms}. Let $y \in k_{\FF,>0}^n$ such that $\trop_{\rv}(f)(y,w) \geq 0$, that is,  $\trop_{\rv}(f)(y,w)  \cap (k_{\FF,>0} \times \Gamma_\FF \cup \{0\})) \neq \emptyset$. Therefore 
\[ \emptyset \neq \phi\big( \trop_{\rv}(f)(y,w)  \cap (k_{\FF,>0} \times \Gamma_\FF \cup \{0\})  \big) \subset  \phi\big( \trop_{\rv}(f)(y,w)\big)  \cap \big(\{1\} \times \Gamma_\FF \cup \{0\}\big).\]
Since $\phi$ is a hyperfield morphism and $\phi(y,w) = (1,w)$, we also have
\begin{align*}
    \phi(   \trop_{\rv}(f)(y,w)  ) = \phi \left(  \bigoplus_\lambda (\ac_\FF(c_\lambda),\val(c_\lambda)) \odot  (y,w)^\lambda  \right) \subset \bigoplus_\lambda (\sign(c_\lambda),\val(c_\lambda)) \odot  (1,w)^\lambda = \trop_r(f)(1,w)
\end{align*}
which implies that the intersection $\trop_r(f)(1,w) \cap (\{1\} \times \Gamma \cup \{0\})$ is nonempty, showing that $\trop_r(f)(1,w) \geq 0$. Part (c) is simply the negation of (b). 
\end{proof}

\subsection{Initial degenerations of definable sets}
\label{Sec:InitialDegen}
Initial degenerations of algebraic varieties have been studied extensively in tropical geometry \cite{SpeyerSturmfels,Speyer_Thesis,Payne,Katz, Gubler}. 
In this section, we extend the study of initial degenerations to definable sets.
Let $\FF$ be a nontrivially valued real closed field.
Inspired by \cite[Sec.~4.3]{Alessandrini},  we call 
    \begin{align}
    \label{Def:InitialDegeneration}
         \initial_w(X) \coloneqq \res_\FF(t^{-w}.X \cap (\cO_{\FF,>0}^\times)^n)  \subset k_{\FF, >0}^n 
    \end{align}
  the \emph{initial degeneration} of a set $X \subset \FF^n_{>0}$ with respect to $w \in \Gamma_{\FF}^n$.
  
\begin{proposition}
    \label{lem:initial_closed_dimension}
    Let $\fF$ be a polynomially bounded o-minimal expansion of a real closed field $\FF$, equipped with a nontrivial order compatible valuation. Then, for any definable set $X \subset \FF_{>0}^n$ and $w \in\Gamma_\FF$, the set $\initial_w(X)$ is a closed subset of $k_{\FF,>0}^n$, which is definable in $\mathfrak{k}_\fF$ (see \Cref{subsec:van_den_Dries} for the notation) and satisfies $\dim \initial_w(X) \le \dim X$.
\end{proposition}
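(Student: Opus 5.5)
First I reduce to the case $w=0$. The twist $b\mapsto t^{-w}.b$ is a definable homeomorphism of $\FF_{>0}^n$ (it uses the parameter $t^{-w}\in\FF$), so $Y\coloneqq t^{-w}.X$ is definable, $\dim Y=\dim X$, and $\initial_w(X)=\initial_0(Y)=\res_\FF\big(Y\cap(\cO_{\FF,>0}^\times)^n\big)$. Put $Z\coloneqq Y\cap(\cO^\times_{\FF,>0})^n$. This set is definable in the expanded structure $(\fF,\cO_\FF)$ and satisfies $\dim Z\le \dim X$.

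\textbf{Definability.} By the work of van den Dries and Lewenberg on $T$-convexity (\Cref{subsec:van_den_Dries}), the residue field with its induced structure $\mathfrak{k}_\fF$ is stably embedded. Concretely, for any set $Z\subset\cO_\FF^n$ definable in $(\fF,\cO_\FF)$, the image $\res_\FF(Z)$ is definable in $\mathfrak{k}_\fF$; see \cite[(2.6) Rem.]{driesTConvexityTameExtensions1995} and the quantifier elimination used in \Cref{prop:density}. I plan to cite this directly. If a more hands-on route is needed, I would use the description of $\mathfrak{k}_\fF$ via a maximal subfield $k\subset\cO_\FF$ that is an elementary substructure. Then $\res_\FF(Z)$ can be written as $\{y\in k^n : \exists b\in Z,\ |b-y|\in\m_\FF\}$, and by quantifier elimination this formula reduces to one over $k$.

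\textbf{Closedness.} Let $y\in k^n_{\FF,>0}$ lie in the closure of $\res_\FF(Z)$ inside $k^n_{\FF,>0}$. Because $\res_\FF(Z)$ is $\mathfrak{k}_\fF$-definable and $\mathfrak k_\fF$ is o-minimal, curve selection in $k$ gives a definable curve $\gamma\colon\,]0,\epsilon[\,\to\res_\FF(Z)$ with $\gamma(s)\to y$. I would then lift this to $\FF$ in the same way as in the proof of \Cref{prop:ValImageClosedM}, using definable Skolem functions in the expanded language. Pick $\beta(s)\in Z$ with $\res_\FF\beta(s)=\gamma(s)$, and evaluate at an infinitesimal positive parameter $s=\delta\in\m_\FF$, after passing to the $\FF$-extension of the lifted curve. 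The point $\beta(\delta)$ is still in $Z$. Moreover $\res_\FF(\beta(\delta))=y$, because $\gamma$ extended to $\FF$ sends infinitesimals to points infinitely close to $y$. Hence $y\in\res_\FF(Z)$.

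An alternative for closedness is to first replace $X$ by $\cl{X}$, in analogy with \Cref{prop:ValImageClosedM}. Then argue directly: the infinitesimal neighbourhood $y+\m_\FF^n$ meets $Z$ iff it meets $\cl{Z}$, and $\cl Z\cap(\cO^\times_{\FF,>0})^n$ has the same residue image. This uses that the residue map is constant on these neighbourhoods and that $\cl{Z}$ is definable. The main obstacle is handling points of $k$ that are limits of residues but are not themselves residues. This is exactly the $T$-convex analogue of \Cref{prop:ValImageClosedM}, and the Skolem-function lifting is the crux.

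\textbf{Dimension.} The inequality $\dim\res_\FF(Z)\le\dim Z$ is the $T$-convex "residue dimension" inequality. To prove it, I would take a cell decomposition of $Z$ in $\fF$ and reduce to a cell $C$ of dimension $d$, which is the graph of a definable map over an open cell $U\subset\FF^d$ up to a coordinate permutation. If $\dim\res_\FF(C)>d$, then $\res_\FF(C)$ would contain a box in $k^{d+1}$. Using definable choice in $\mathfrak{k}_\fF$ and the fact that the lift $k\hookrightarrow\cO_\FF$ is elementary, this would produce $d+1$ coordinates of $C$ that are algebraically (definably) independent over $\FF$-parameters. That contradicts $\dim C=d$. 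Equivalently, I would cite the known result $\dim_{\mathfrak k}\res(Z)\le\dim Z$ for $T$-convex structures from van den Dries's work, as was done for o-minimality of the value group.
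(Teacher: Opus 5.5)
\textbf{Gap in the closedness step.} The argument breaks exactly at the point you call the crux. Your Skolem lift $\beta$ takes values in $Z\subset\FF^n$, not in $k^n$. So it is not $\mathfrak{k}_\fF$-definable, and there is no ``$\FF$-extension'' of it to pass to. Its natural domain is $D=\{s\in\,]0,\epsilon[\ :\ \exists b\in Z,\ b-\gamma_\FF(s)\in\m_\FF^n\}$, where $\gamma_\FF$ is the extension of $\gamma$ via a lift $k_\FF\hookrightarrow\cO_\FF$. This set is definable only in the pair $(\fF,\cO_\FF)$. It contains every point of $]0,\epsilon[\,\cap k$, but weak o-minimality does not force it to meet $\m_\FF$: the convex set $]0,\epsilon[\,\setminus\m_\FF$ contains all standard points and no infinitesimal. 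Moreover, $\res_\FF(\gamma_\FF(\delta))=y$ for every positive $\delta\in\m_\FF$. Hence the claim that $\beta(\delta)$ is defined is literally equivalent to $y\in\res_\FF(Z)$, which is what you want to prove, so the argument is circular. Your alternative via $\cl{X}$ only shows that $\res_\FF(Z)$ is unchanged when $X$ is replaced by its closure. It does not show that $\res_\FF(Z)$ is closed.

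\textbf{A repair.} Closedness needs neither curve selection nor Skolem functions, only definable completeness of $\fF$, and it is independent of the definability of $\res_\FF(Z)$. Pick $\hat y\in(\cO^\times_{\FF,>0})^n$ with $\res_\FF(\hat y)=y$, and let $d\coloneqq\inf\{\max_i|b_i-\hat y_i| : b\in Y\}$. Suppose $y$ lies in the closure of $\res_\FF(Z)$. Then for each $\epsilon\in k_{\FF,>0}$ some $z\in Z$ has $\res_\FF(\max_i|z_i-\hat y_i|)<\epsilon$, whence $\res_\FF(d)=0$, i.e.\ $d\in\m_\FF$. Choose $\eta\in\m_\FF$ with $\eta>d$, which is possible since the valuation is nontrivial. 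Then some $b\in Y$ has $\max_i|b_i-\hat y_i|<\eta$, so $b-\hat y\in\m_\FF^n$, hence $b\in Z$ and $\res_\FF(b)=y$.

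\textbf{Comparison with the paper.} The paper makes the same twist by $t^{-w}$ as you. It then simply applies \cite[Th.~A]{driesTConvexityTameExtensions1997} to the $\fF$-definable set $Y$, obtaining closedness, $\mathfrak{k}_\fF$-definability and the dimension bound in one stroke. Your fallbacks to van den Dries for definability and dimension amount to the same citation, with three caveats. First, \cite[(2.6)~Rem.]{driesTConvexityTameExtensions1995} only supplies the structure $\mathfrak{k}_\fF$, not the definability of residue images. Second, $Z$ is not $\fF$-definable, so any cell decomposition must be done on $Y$. Third, your dimension sketch does not work as written: coordinates of points of $C\subset\FF^n$ cannot be ``independent over $\FF$-parameters''. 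One would need generic points in an elementary extension and a rank inequality for residue fields, which is the substance of Theorem~A.
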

\begin{proof}
    For any $a \in \FF_{>0}$, the set $a.X$ is a definable subset of $\FF_{>0}^n$ of the same dimension of $X$. Then, by \cite[Th.~A]{driesTConvexityTameExtensions1997}, $\res_\FF(a.X \cap (\cO_{\FF,>0}^\times)^n)$ is a closed definable subset of $k_{>0}^n$ of dimension at most $\dim a.X = \dim X$. The resul then follows by choosing $a = t^{-w}$.
\end{proof}
To our best knowledge, \Cref{lem:initial_closed_dimension} did not appear previously in the literature even for the semialgebraic setting.
We can describe initial degenerations using the angular component map, as follows.
\begin{lemma}
\label{Lemma:AlternativeDescrInitialDeg}
Let $\FF$ be a nontrivially valued real closed field and let $X\subset \FF_{>0}^n$ be a set. Then
    \[\initial_w(X) =  \ac_\FF (X\cap \val^{-1}(w) ).\]
\end{lemma}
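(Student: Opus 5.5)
We prove the equality by a direct double inclusion, unwinding the definitions of $\initial_w(X)$ in \eqref{Def:InitialDegeneration} and of $\ac_\FF$ from Assumption~\ref{assumtion:valued_fields}. The key observation is that, for $a \in \FF_{>0}^n$ and $w \in \Gamma_\FF^n$, the twisted point $t^{-w}.a$ lies in $(\cO_{\FF,>0}^\times)^n$ if and only if $\val(a_i) = w_i$ for all $i \in [n]$. Indeed, $\val(t^{-w_i}a_i) = \val(a_i) - w_i$ because the splitting satisfies $\val(t^{w_i}) = w_i$. Moreover, an element of $\FF$ lies in $\cO_\FF^\times$ exactly when its valuation is $0$. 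Positivity is automatic, since $t^{-w_i} > 0$ (it is a square $(t^{-w_i/2})^2$, as the splitting is a group homomorphism and $\Gamma_\FF$ is divisible) and $a_i > 0$.

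For the inclusion ``$\subset$'', take $y \in \initial_w(X)$. Then $y = \res_\FF(t^{-w}.a)$ for some $a \in X$ with $t^{-w}.a \in (\cO_{\FF,>0}^\times)^n$. By the observation, $\val(a) = w$, so $a \in X \cap \val^{-1}(w)$. Coordinatewise we then have
\[
\ac_\FF(a_i) = \res_\FF(t^{-\val(a_i)}a_i) = \res_\FF(t^{-w_i}a_i) = y_i ,
\]
so $y \in \ac_\FF(X \cap \val^{-1}(w))$.

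For the inclusion ``$\supset$'', take $a \in X$ with $\val(a) = w$. By the observation, $t^{-w}.a \in (\cO_{\FF,>0}^\times)^n$. The same computation as above gives $\ac_\FF(a) = \res_\FF(t^{-w}.a) \in \initial_w(X)$.

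There is no real obstacle here. The only point requiring care is the convention $t^{-w} = \psi(-w) = \psi(w)^{-1}$, which guarantees that the twist $t^{-w}.X$ in the definition and the factor $t^{-\val(a)}$ in $\ac_\FF$ agree exactly when $\val(a) = w$. This argument uses neither o-minimality nor definability, consistent with the lemma being stated for an arbitrary set $X$.
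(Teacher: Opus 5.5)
Your proof is correct and takes the same route as the paper, which also just unwinds the definitions of $\initial_w(X)$ and $\ac_\FF$: $t^{-w}.a$ is a positive unit exactly when $\val(a)=w$, and in that case $\res_\FF(t^{-w}.a)=\ac_\FF(a)$. Your explicit check that $t^{-w_i}>0$, using the divisibility of $\Gamma_\FF$, fills in a small point that the paper leaves implicit.
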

\begin{proof}
    By definition~\eqref{Def:InitialDegeneration}, $y \in \initial_w(X)$ if and only if there exists $a \in t^{-w}.X \cap (\cO_{\FF,>0}^\times)^n$ such that $\res_\FF(a) = y$. This is equivalent to $t^w.a \in X$, $\val(t^w.a) = w$ and $\ac_\FF (t^w.a) = \res_\FF(t^{-\val(t^w.a)}.(t^w.a)) = \res_\FF(a)  = y$. 
\end{proof}
  
In the rest of this section, we describe the initial degeneration by an algebraic object, by the \emph{initial nonnegativity cone}, that plays the same role as initial ideals in tropicalizations of algebraic varieties.

The \emph{nonnegativity cone} of a set $X \subset \mathbb{F}^n_{>0}$ is
    \begin{align*}
    \mathcal{P}(X) \coloneqq \{\, f \in \mathbb{F}[\vb x^\Lambda] \mid f(a) \geq 0 \text{ for all } a \in X \,\}.
    \end{align*}
The nonnegativity cone $\mathcal{P}(X)$ is closed under addition, multiplication and contains all the squares of polynomials in $\mathbb{F}[\vb x^\Lambda]$, that is, it is a preordering \cite[Def.~1.1.23]{Scheiderer2024}. The following elementary fact about nonnegativity cones will be used repeatedly.
\begin{lemma}
\label{lemma:nonnegcones}
    Let $\mathbb{K}$ be a real closed field and $X \subset \mathbb{K}^n_{>0}$ be a set with nonnegativity cone $\mathcal{P}(X) \subset \mathbb{K}[\vb x^\Lambda]$. Then it holds that
    \begin{align}
    \label{eq:lemma:nonnegcones}
    X \subset \{\, a \in \mathbb{K}^n_{>0} \mid f(a) \geq 0 \text{ for all } f \in \mathcal{P}(X)\,\}.
    \end{align}
    Moreover, if $X$ is closed in $\KK^n_{>0}$, then equality holds in~\eqref{eq:lemma:nonnegcones}.
\end{lemma}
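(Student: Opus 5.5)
The inclusion \eqref{eq:lemma:nonnegcones} is immediate from the definition of $\mathcal{P}(X)$. If $a \in X$, then every $f \in \mathcal{P}(X)$ satisfies $f(a) \geq 0$, so $a$ lies in the right-hand side. No structure on $X$ is needed for this direction.

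For the equality when $X$ is closed in $\KK^n_{>0}$, we show the contrapositive. Let $a \in \KK^n_{>0} \setminus X$; we will construct $f \in \mathcal{P}(X)$ with $f(a) < 0$.

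Since $X$ is closed in $\KK^n_{>0}$ and the strong topology is generated by open boxes, there are $\epsilon \in \KK_{>0}$ and a box $B = \prod_{i} ]a_i - \epsilon, a_i + \epsilon[$ with $B \subset \KK^n_{>0}$ and $B \cap X = \emptyset$. We may also take $\epsilon$ small enough that $a_i - \epsilon > 0$ for all $i$.

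We use a polynomial with exponents in $\NN \subset \Lambda$, so that $f \in \KK[\vb x^\Lambda]$. Consider first
\[
g(x) = \sum_{i=1}^n (x_i - a_i)^2 - \epsilon^2 .
\]
Then $g(a) = -\epsilon^2 < 0$. The set $\{g < 0\}$ is the open Euclidean ball of radius $\epsilon$ around $a$. This ball is contained in the box $B$, because each coordinate satisfies $|x_i - a_i| < \epsilon$. Since $B$ misses $X$, we get $g \geq 0$ on $X$, so $g \in \mathcal{P}(X)$ and $g(a) < 0$.

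This shows that $a$ is not in the right-hand side. The argument uses only that $\KK$ is an ordered field and that $X$ is closed; definability is not needed.

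The only subtle point is relative closedness. The box $B$ must avoid $X$, but it is not required to avoid points of $\KK^n_{>0}$ outside $X$. Since $g$ is only tested on $X$, this causes no problem.
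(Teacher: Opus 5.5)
Your proof is correct and is essentially the paper's argument. You separate a point $a \in \KK^n_{>0}\setminus X$ from $X$ with the same polynomial $\sum_i (x_i-a_i)^2 - \epsilon^2$, built from a neighbourhood of $a$ that misses $X$. The only cosmetic difference is that the paper puts a closed ball inside the complement, so the polynomial is strictly positive on $X$. You instead use the open ball inside an open box, which gives $g \ge 0$ on $X$; that is all membership in $\mathcal{P}(X)$ requires.
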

\begin{proof}
    Let $T = \{\, a \in \mathbb{K}^n_{>0} \mid f(a) \geq 0 \text{ for all } f \in \mathcal{P}(X)\,\}$.  The inclusion $X\subset T$ follows by definition. Now assume that $X$ is closed and let $a \in \KK^n_{>0} \setminus X$. Since $X$ is closed and by the definition of strong topology, there exists $r \in \FF_{>0}$ such that for the closed ball with center $a$ and radius $r$ we have
    \[ \{ x \in \KK^n_{>0} \mid (x_1 - a_1)^2 + \dots + (x_n - a_n)^2 - r^2 \leq 0\} \subset \KK^n_{>0} \setminus X \]
    Thus, $(x_1 - a_1)^2 + \dots + (x_n - a_n)^2 - r^2$ is negative at $a$ but positive on $X$, showing that $\KK^n_{>0} \setminus X \subset \KK^n_{>0} \setminus T$ and concluding the proof.
\end{proof}

\begin{proposition}
\label{Prop:InitialDegViaPreorder}
Let $\FF$ be a nontrivially valued real closed field.
For any set $X \subset \FF_{>0}^n$, we have that
    \begin{align}
\label{Eq:InitialSetAndInitialNonnegCone}
        \initial_w(X) \subset \{ \, y \in k_{\FF,>0}^n \mid  \initial_w(f)(y) \ge 0  \text{ for all } f\in \pos(X)\, \}
    \end{align}
     Moreover, if $X$ is closed in $\FF^n_{>0}$, then we have equality in~\eqref{Eq:InitialSetAndInitialNonnegCone}. 
\end{proposition}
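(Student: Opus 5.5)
The inclusion \eqref{Eq:InitialSetAndInitialNonnegCone} should follow directly from \Cref{prop:InitilFormPolynomials}; the equality for closed $X$ will come from a separating quadratic polynomial, as in \Cref{lemma:nonnegcones}, transported through the twist $t^w$.

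\textbf{Inclusion.} Let $y \in \initial_w(X)$. By definition \eqref{Def:InitialDegeneration} there is $b \in (\cO_{\FF,>0}^\times)^n$ with $t^w.b \in X$ and $\res_\FF(b) = y$. Fix $f \in \pos(X)$ and let $W$ be as in \Cref{prop:InitilFormPolynomials}. Then $f(t^w.b) \ge 0$, so $t^{-W} f(t^w.b) \ge 0$. Every monomial of $t^{-W}f(t^w.b)$ has valuation $\ge 0$, so this element lies in $\cO_\FF$. Since the valuation is order compatible, $\res_\FF$ maps nonnegative elements of $\cO_\FF$ to nonnegative elements of $k_\FF$. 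By \Cref{prop:InitilFormPolynomials}, $\initial_w(f)(y) = \res_\FF(t^{-W}f(t^w.b)) \ge 0$.

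\textbf{Reverse inclusion for closed $X$.} Let $y \in k_{\FF,>0}^n \setminus \initial_w(X)$. I will build a polynomial $g \in \pos(X)$ with $\initial_w(g)(y) < 0$.

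First choose a lift $b \in (\cO^\times_{\FF,>0})^n$ with $\res_\FF(b) = y$. Put $Y := t^{-w}.X$; this is a closed definable subset of $\FF^n_{>0}$, because scaling coordinates by positive constants is a homeomorphism. If $Y = \emptyset$, then $X = \emptyset$, so $-1 \in \pos(X)$ and we are done. Otherwise let
\[
\delta := \inf\{\, \textstyle\sum_i (a_i - b_i)^2 \mid a \in Y \,\}.
\]

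The key step is to show $\val(\delta) \le 0$. By o-minimality, intersect $Y$ with a closed box around $b$ that meets $Y$, and take its closure $\overline{Y}$ in $\FF^n$; this is closed and bounded, hence definably compact, so the infimum is attained at some $a^*$ in that closure. Suppose $\val(\delta) > 0$. Then $a^* \in \cO_\FF^n$ and $\res_\FF(a^*) = y$. Since $y_i > 0$, we get $a^*_i > 0$ with $\val(a^*_i) = 0$, so $a^* \in \FF^n_{>0}$. Because $Y$ is closed in $\FF^n_{>0}$, this gives $a^* \in Y$, hence $y \in \res_\FF(Y \cap (\cO^\times_{\FF,>0})^n) = \initial_w(X)$, a contradiction. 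This compactness/positivity argument is where I expect the care to be needed: the infimum must really be attained, and the limit point must not escape to the boundary of the orthant. Both are handled by the fact that the residue $y$ is strictly positive.

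Now set $\rho := \tfrac12 \min(\delta, 1)$. Then $\rho > 0$ and $\val(\rho) = 0$. Define
\[
g(x) := \sum_{i=1}^n \bigl(t^{-w_i}x_i - b_i\bigr)^2 - \rho \in \FF[\vb x^\Lambda],
\]
which uses only exponents $0,1,2 \in \Lambda$. For $x \in X$ we have $t^{-w}.x \in Y$, so $g(x) \ge \delta - \rho \ge 0$; thus $g \in \pos(X)$.

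Finally compute the initial form. Writing $g = \sum_\lambda c_\lambda x^\lambda$, the twisted monomials $c_\lambda (t^w.x)^\lambda$ all have coefficients in $\cO_\FF$, and the $x_i^2$-terms have coefficient exactly $1$. Hence $W = 0$. By \Cref{prop:InitilFormPolynomials},
\[
\initial_w(g)(y) = \res_\FF(g(t^w.b)) = \res_\FF(-\rho) < 0 ,
\]
where the last inequality uses $\val(\rho) = 0$ and $\rho > 0$. So $y$ is not in the right-hand side of \eqref{Eq:InitialSetAndInitialNonnegCone}, which proves equality.
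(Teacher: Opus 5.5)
Your proof is correct once $X$ is assumed definable in an o-minimal expansion of $\FF$, and it takes a different route from the paper.

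\textbf{Comparison with the paper.} The paper asserts that, for every $f$ and every $b\in(\cO_\FF^\times)^n$, one has $f(t^w.b)\ge 0$ if and only if $\initial_w(f)(\res_\FF(b))\ge 0$. For closed $X$ it then uses \Cref{lemma:nonnegcones} to conclude that \emph{every} lift $t^w.b$ of $y$ lies in $X$. Only the forward implication is valid, and that implication is exactly your inclusion argument. For $n=1$, $w=0$, $f=x-1$ and $b=1-t$, one gets $\initial_0(f)(1)=0$ but $f(b)=-t<0$. For the closed definable set $X=\{a\in\FF_{>0}\mid a\ge 1\}$, the point $y=1$ satisfies all the residue inequalities, yet its lift $1-t$ is not in $X$. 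So a pointwise transfer from residues back to lifts cannot work. What is needed is a single $g\in\pos(X)$ with $\initial_w(g)(y)<0$, which is what you construct. Your key step is the right one: $y\notin\initial_w(X)$ forces the squared distance $\delta$ from a lift $b$ to $t^{-w}.X$ to be non-infinitesimal.

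\textbf{Two comments.}

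First, state the definability hypothesis explicitly. You need it for $\delta$ to exist, and it cannot be dropped, since the statement as written (``any set'') is false. Already in the semialgebraic case with $n=1$ and $w=0$, take $X=\{a\in\FF_{>0}\mid \val(a-1)\le 0\}$. This set is closed (indeed clopen) in $\FF_{>0}$, and $\initial_0(X)=k_{\FF,>0}\setminus\{1\}$. The right-hand side of the claimed equality is closed in $k_{\FF,>0}$, because each $\initial_0(f)$ is continuous. By your first part it contains $\initial_0(X)$, so it also contains $1$, and equality fails.

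Second, the compactness step you flagged is unnecessary. Suppose $\val(\delta)>0$. Choose $a\in t^{-w}.X$ with $\sum_i(a_i-b_i)^2<\delta+t$. This sum then lies in $\m_\FF$, so $\res_\FF(a)=y$, and hence $y\in\initial_w(X)$ directly. Your argument therefore proves the equality for every definable $X$, closed or not. This is consistent with $\rv(X)=\rv(\cl{X})$ and $\pos(X)=\pos(\cl{X})$. If you keep the attainment argument instead, fix some $a_0\in t^{-w}.X$ and take the box of half-side $\|a_0-b\|$. Then restricting to the box does not change the infimum.
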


\begin{proof}
    The splitting $t > 0$ is positive by assumption, and the residue map $\res_{\mathbb{F}}$ is compatible with the ordering on $\mathbb{F}$ and $k_\mathbb{F}$. Then for all $f \in \mathbb{F}[\vb x^\Lambda]$ and $b\in (\mathcal{O}_{\mathbb{F}}^\times)^n$, using the description of initial forms from Lemma~\ref{prop:InitilFormPolynomials} we have that
 \begin{align}
 \label{eq:compatibleorders}
      f(t^{w}.b) \geq 0 \quad \text{if and only if} \quad \initial_w(f)(\res_\FF(b)) \geq 0.
 \end{align}

 By definition, $y \in \initial_w(X)$ if and only if there exists $b\in (\mathcal{O}_{\mathbb{F}}^\times)^n$ such that $t^{w}.b \in X$ and $\res_{\mathbb{F}}(b) = y$. From Lemma~\ref{lemma:nonnegcones}, it follows that $f(t^{w}.b) \geq 0$ for all $f \in \mathcal{P}(X)$, and using \eqref{eq:compatibleorders}, we have that $y$ is contained in the right-hand side of~\eqref{Eq:InitialSetAndInitialNonnegCone}.

 Now let $X$ be closed and let $y \in k_{\FF,>0}^n$ satisfy $\initial_w(f)(y) \geq 0$ for all $f \in \mathcal{P}(X)$. By \eqref{eq:compatibleorders} we have $f(t^w.b) \geq 0$ for any $b\in (\mathcal{O}_{\mathbb{F}}^\times)^n$ with $\res_{\mathbb{F}}(b) = y$, and using Lemma~\ref{lemma:nonnegcones} we have $t^w.b \in X$, concluding the proof.
\end{proof}

\begin{remark}
\label{rem:initial}
In the special case where $X$ is an algebraic variety intersected with $\FF^n_{>0}$, one might be tempted to believe that it is enough to consider the initial forms of polynomials vanishing on $X$ in order to obtain the equality in Proposition~\ref{Prop:InitialDegViaPreorder}. However, this requires additional smoothness assumptions on $X$, see \cite[Th. 3.8]{RoseTelek} or \cite[Lemma 4.33]{Rau2025}. An example showing that these assumptions are necessary can be realized using a similar construction as in \Cref{Ex:SmoothConditions}. We refer to  \cite[Rem.~3.9]{RoseTelek} for more details.
\end{remark}

\subsection{First version of the Fundamental Theorem}
\label{Sec:FristVersion}
In this section, we prove a version of the Fundamental Theorem for both the tropicalization and the fine tropicalization of definable sets.
The first equality in \eqref{Eq:WeakFundamentalFineTrop} of \Cref{Thm:WeakFundamentalFineTrop} is the real counterpart of \cite[Th.~5.2]{maxwellGeometryTropicalExtensions2024}.

\begin{theorem}
\label{Thm:WeakFundamentalFineTrop}
Let $\FF$ be a nontrivially valued real closed field.
For any nonempty $X \subset \FF_{>0}^n$, we have 
    \begin{align}
    \label{Eq:WeakFundamentalFineTrop}
        \rv(X) =  \bigcup_{ w \in \Gamma_\FF^n} \initial_w(X) \times \{w\}  \subset \bigcap_{f \in \mathcal{P}(X)}     \{ \, z \in \RV_{\FF,>0}^n \mid \trop_{\rv}(f)(z) \geq 0 \,\}.
        \end{align}
        If $X$ is closed in $\FF^n_{>0}$, then equality holds in~\eqref{Eq:WeakFundamentalFineTrop}.
\end{theorem}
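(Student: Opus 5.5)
The proof splits into three parts: the equality on the left, the inclusion on the right, and equality under closedness. Throughout, $\RV_{\FF,>0}^n$ is identified with $k_{\FF,>0}^n \times \Gamma_\FF^n$ via the fixed splitting, as in \eqref{Eq:RVIso}.

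\emph{The equality $\rv(X) = \bigcup_w \initial_w(X)\times\{w\}$.} Under the identification, $\rv(a) = (\ac_\FF(a), \val(a))$ coordinatewise. So $(y,w) \in \rv(X)$ if and only if there is $a \in X$ with $\val(a) = w$ and $\ac_\FF(a) = y$. That is, $y \in \ac_\FF(X \cap \val^{-1}(w))$, which equals $\initial_w(X)$ by Lemma~\ref{Lemma:AlternativeDescrInitialDeg}. Taking the union over $w \in \Gamma_\FF^n$ gives the equality; no hypothesis on $X$ is needed.

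\emph{The inclusion.} Let $(y,w) \in \rv(X)$, so $y \in \initial_w(X)$. By Proposition~\ref{Prop:InitialDegViaPreorder}, $\initial_w(f)(y) \ge 0$ for every $f \in \mathcal{P}(X)$. Proposition~\ref{Lemma:TropPrecludingPosValues}(a) turns this into $\trop_{\rv}(f)(y,w) \ge 0$ for every $f \in \mathcal{P}(X)$. Alternatively, one can argue directly from Lemma~\ref{Prop:PolyRVTrop}: pick $a \in X$ with $\rv(a) = (y,w)$. Then $f(a) \ge 0$, so $\rv(f(a)) \in (k_{\FF,>0}\times\Gamma_\FF) \cup \{0\}$, and $\rv(f(a))$ lies in $\trop_{\rv}(f)(y,w)$.

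\emph{Equality when $X$ is closed.} Take $(y,w) \in \RV_{\FF,>0}^n$ with $\trop_{\rv}(f)(y,w) \ge 0$ for all $f \in \mathcal{P}(X)$. By Proposition~\ref{Lemma:TropPrecludingPosValues}(a), $\initial_w(f)(y) \ge 0$ for all $f \in \mathcal{P}(X)$. The equality case of Proposition~\ref{Prop:InitialDegViaPreorder}, which applies since $X$ is closed in $\FF^n_{>0}$, gives $y \in \initial_w(X)$. By the first step, $(y,w) \in \rv(X)$.

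The only real content is the equality case of Proposition~\ref{Prop:InitialDegViaPreorder}, which is already available. The step needing care is the bookkeeping between the $\RV$ identification and the sign conventions for $\trop_{\rv}(f) \ge 0$. In particular, the case $\initial_w(f)(y) = 0$ must count as $\ge 0$; Proposition~\ref{Lemma:TropPrecludingPosValues}(a) handles exactly this.
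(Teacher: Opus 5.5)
Your argument is the paper's proof step for step: the left equality comes from \Cref{Lemma:AlternativeDescrInitialDeg}, and both inclusions come from \Cref{Prop:InitialDegViaPreorder} combined with \Cref{Lemma:TropPrecludingPosValues}(a). The left equality and the inclusion are correct, and your direct check via \Cref{Prop:PolyRVTrop} is a good independent confirmation.

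The gap is the step you call ``already available''. The equality case of \Cref{Prop:InitialDegViaPreorder} fails for general closed $X$, and so does the last sentence of the theorem. Take $n=1$, $\FF=\RR((t^\RR))$ and $X=\{1+1/j \mid j\in\NN,\ j\ge 1\}$. Distinct points of $X$ have distinct residues, so every $c\in\FF_{>0}\setminus X$ has a neighbourhood $]c-\delta,c+\delta[$ with $\delta\in\m_\FF$ that misses $X$. Hence $X$ is closed in $\FF_{>0}$, and $\rv(X)=\{(1+1/j,0)\}\not\ni(1,0)$. Now let $f\in\mathcal{P}(X)$. Then \Cref{prop:InitilFormPolynomials} gives $\initial_0(f)(1+1/j)=\res_\FF(t^{-W}f(1+1/j))\ge 0$ for all $j$. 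Since $\initial_0(f)$ is continuous on $\RR_{>0}$, we get $\initial_0(f)(1)\ge 0$, i.e.\ $\trop_{\rv}(f)(1,0)\ge 0$. So $(1,0)$ lies in the right-hand side.

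The faulty step in the proof of \Cref{Prop:InitialDegViaPreorder} is the claimed equivalence $f(t^w.b)\ge 0 \Leftrightarrow \initial_w(f)(\res_\FF(b))\ge 0$. Only ``$\Rightarrow$'' holds. ``$\Leftarrow$'' breaks exactly when the initial form vanishes at $\res_\FF(b)$ (e.g.\ $f=x-1$, $w=0$, $b=1-t$), which is the very case you singled out as delicate. In fact closedness of $X$ is irrelevant: $\rv(X)=\rv(\cl{X})$ for every $X$, since a point $a\in\cl{X}$ has a point of $X$ in $\prod_i\,]a_i(1-t),a_i(1+t)[$. On the other hand, equality forces every $\initial_w(X)$ to be closed in $k_{\FF,>0}^n$, because for fixed $w$ the right-hand side is closed (each $\initial_w(f)$ is continuous).

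What does hold, and is all that \Cref{thm:fine_fundamental} uses, is equality for $X$ definable in a polynomially bounded o-minimal expansion of $\FF$, closed or not. The missing ingredients are that $\initial_w(X)$ is then closed in $k_{\FF,>0}^n$ by \Cref{lem:initial_closed_dimension}, and an explicit separating polynomial. Suppose $y\in k_{\FF,>0}^n\setminus\initial_w(X)$.
\begin{itemize}
\item Pick $r\in k_{\FF,>0}$ with $r<\min_i y_i$ such that $\{\eta\mid \sum_i(\eta_i-y_i)^2<r^2\}$ misses $\initial_w(X)$.
\item Lift $y$ to $b\in(\cO_{\FF}^\times)^n$ and $r/2$ to $\rho\in\cO_{\FF}^\times$, and put $f=\sum_i(t^{-w_i}x_i-b_i)^2-\rho^2$.
\item For $a\in X$ let $u=t^{-w}.a$. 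If all $u_i$ are units, then $\res_\FF(f(a))\ge r^2-r^2/4>0$.
\item If some $u_i\in\m_\FF$, then $f(a)\ge(u_i-b_i)^2-\rho^2$, whose residue is $y_i^2-r^2/4>0$.
\item If some $\val(u_i)<0$, then $(u_i-b_i)^2>\rho^2$.
\end{itemize}
So $f>0$ on $X$, hence $f\in\mathcal{P}(X)$. On the other hand $W=0$ and $\initial_w(f)(y)=-r^2/4<0$, so $\trop_{\rv}(f)(y,w)<0$ by \Cref{Lemma:TropPrecludingPosValues}(a). This gives the reverse inclusion.
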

\begin{proof}
    A pair $(y,w)$ lies in $\rv(X) \subset k^n_{\FF,>0} \times \Gamma^n_\FF$ if and only if there exists $a \in X$ such that $\ac_\FF(a) = y$ and $\val(y) =w$, which is equivalent to $y \in \ac_\FF(X \cap \val^{-1}(w) )$. Thus, the first equality in~\eqref{Eq:WeakFundamentalFineTrop} follows from Lemma~\ref{Lemma:AlternativeDescrInitialDeg}. 

    By Proposition~\ref{Prop:InitialDegViaPreorder}, we have $\initial_w(X) \subset \bigcap_{f \in \mathcal{P}(X)} \{\, y  \in k_{\FF,>0}^n \mid \initial_w(f)(y) \geq 0 \, \}$ with equality if $X$ is closed. Therefore
    \begin{align*}
       \bigcup_{ w \in \Gamma_\FF^n} \initial_w(X) \times \{w\}  &\subset  \bigcup_{ w \in \Gamma_\FF^n}   \left( \bigcap_{f \in \mathcal{P}(X)} \{y  \in k_{\FF,>0}^n \mid \initial_w(f)(y) \geq 0 \} \right) \times \{w\} \\
       &= \left\{  (y,w) \in \Gamma_\FF^n \times k_{\FF,>0}^n \mid \initial_w(f)(y) \geq 0  \text{ for all } f \in \mathcal{P}(X) \right\} \\
        &\overset{(*)}{=}     \bigcap_{f \in \mathcal{P}(X)}  \{\,(y,w)  \in \RV_{\FF,>0}^n \mid \trop_{\rv}(f)(y,w) \geq 0 \,\}
    \end{align*}
    where in (*) we used that  $\initial_w(f)(y) \geq 0$ if and only if  $\trop_{\rv}(f)(y,w) \geq 0$, see Lemma~\ref{Eq:TropPrecludingPosValues}.
\end{proof}

\begin{corollary}
\label{Cor:WeakFundamentalTrop}
Let $\FF$ be a nontrivially valued real closed field.
For any nonempty subset $X \subset \FF_{>0}^n$, we have the following inclusions of sets
    \begin{align}
    \label{Eq:Cor:WeakFundamentalTrop}
        \val(X) = \{ \, w \in \Gamma^n_\mathbb{F} \mid \initial_w(X) \neq \emptyset \, \} \subset \bigcap_{f \in \mathcal{P}(X)}     \{ w \in \Gamma_\mathbb{F}^n \mid \trop_r(f)(1,w) \geq 0\}.
        \end{align}
\end{corollary}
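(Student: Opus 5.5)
The plan is to deduce the corollary from \Cref{Thm:WeakFundamentalFineTrop} by projecting onto the valuation coordinate. Under the identification $\RV^\times_\FF \cong k^\times_\FF \times \Gamma_\FF$, this projection is the map $\val_{\rv}$, and we have $\val = \val_{\rv} \circ \rv$ coordinatewise.

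\textbf{Step 1: the equality.} Applying $\val_{\rv}$ to the first equality in \eqref{Eq:WeakFundamentalFineTrop} gives
\[
\val(X) = \val_{\rv}(\rv(X)) = \val_{\rv}\Big(\bigcup_{w\in\Gamma_\FF^n} \initial_w(X)\times\{w\}\Big) = \{\, w \in \Gamma_\FF^n \mid \initial_w(X) \neq \emptyset \,\}.
\]
Alternatively, this follows directly from \Cref{Lemma:AlternativeDescrInitialDeg}: $\initial_w(X) = \ac_\FF(X\cap\val^{-1}(w))$ is nonempty if and only if $X \cap \val^{-1}(w) \neq \emptyset$, that is, if and only if $w \in \val(X)$.

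\textbf{Step 2: the inclusion.} Let $w\in\val(X)$. By Step 1, $\initial_w(X)$ is nonempty, so choose $y\in \initial_w(X)$. Then $(y,w)\in \rv(X)$, and the inclusion in \eqref{Eq:WeakFundamentalFineTrop} gives $\trop_{\rv}(f)(y,w)\ge 0$ for every $f\in\pos(X)$. Since $y \in k_{\FF,>0}^n$, \Cref{Lemma:TropPrecludingPosValues}(b) then yields $\trop_r(f)(1,w)\ge 0$ for every $f\in\pos(X)$. Hence $w$ lies in the intersection on the right-hand side of \eqref{Eq:Cor:WeakFundamentalTrop}.

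No step is a real obstacle. The only care needed is bookkeeping: $\initial_w(X)\subset k_{\FF,>0}^n$ holds because $X\subset\FF^n_{>0}$ and $\ac_\FF$ preserves sign, which is exactly what \Cref{Lemma:TropPrecludingPosValues}(b) requires. Nonemptiness of $X$ is not actually needed for the argument.
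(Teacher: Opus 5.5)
Your proposal is correct and matches the paper's proof: the equality comes from applying $\val_{\rv}$ to the first equality in \eqref{Eq:WeakFundamentalFineTrop}, and the inclusion comes from the inclusion in \Cref{Thm:WeakFundamentalFineTrop} combined with \Cref{Lemma:TropPrecludingPosValues}(b). Your alternative derivation of the equality directly from \Cref{Lemma:AlternativeDescrInitialDeg}, and your remark that nonemptiness of $X$ is not needed, are both valid.
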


\begin{proof}
    Since the diagram \eqref{Eq:CommutativeDiagram} commutes we have $\val_{\rv}(\rv(X)) = \val(X)$. The equality 
    \[\val_{\rv}\big( \bigcup_{ w \in \Gamma^n} \initial_w(X) \times \{w\}  \big) =  \{ \, w \in \Gamma^n_\mathbb{F} \mid \initial_w(X) \neq \emptyset \, \} \] is also immediate. 
    Thus, the equality in~\eqref{Eq:Cor:WeakFundamentalTrop} follows from Theorem~\ref{Thm:WeakFundamentalFineTrop}.
    To show the remaining inclusion, let $w \in \Gamma_\FF^n$ such that $\initial_w(X) \neq \emptyset$. By Theorem~\ref{Thm:WeakFundamentalFineTrop}, there exists $z = (y,w) \in \RV_{\FF,>0}^n$ such that $\trop_{\rv}(f)(z) \geq 0$ for all $f \in \mathcal{P}(X)$. Lemma~\ref{Lemma:TropPrecludingPosValues} implies that  $\trop_r(f)(1,w) \geq 0$ for all $f \in \mathcal{P}(X)$. Thus $w = \val_{\rv}(y,w)$ is contained in the right-hand side of~\eqref{Eq:Cor:WeakFundamentalTrop}.    
\end{proof}

In \Cref{Thm:WeakFundamentalTrop}, we strengthen \Cref{Cor:WeakFundamentalTrop} for definable sets in two ways. First, we consider the case where $X \subset \KK^n_{>0}$ and $\KK$ is a real closed field whose valuation may be trivial. Second, we show that the intersection may be taken over finitely many polynomials.

Our proof closely follows \cite[Sec.~6]{jellRealTropicalizationAnalytification2020}. However, for completeness, we include the necessary arguments here. To this end, we need the following results.

\begin{definition}
\label{Def:LambdaGammaPolyhedra}
Let $\Lambda$ and $\Gamma$ be subgroups of $\mathbb{R}$. Generalizing \cite[Def.~6.5]{jellRealTropicalizationAnalytification2020}, we define a \emph{$\Gamma$-affine $\Lambda$-polyhedron} in $\mathbb{R}^n$ as a set of the form
\[\left\{\,p \in \mathbb{R}^n \mid \lambda_i \cdot p +w_i \leq 0 \text { for } i \in [m] \,\right\},
\]
and an \emph{open $\Gamma$-affine $\Lambda$-polyhedron} in $\mathbb{R}^n$ as a set of the form 
\[
\left\{\,p \in \mathbb{R}^n \mid \lambda_i \cdot p+w_i<0 \text { for } i\in [ m] \,\right\}
\]
where $\lambda_1, \ldots, \lambda_m \in \Lambda^n$, and $w_1, \ldots, w_m \in \Gamma$.
\end{definition}
 When $\Gamma$ is the value group of a real closed field, which is an ordered vector space over $\Lambda$, the field of exponents of a polynomially bounded o-minimal structure, then open and closed $\Gamma$-affine $\Lambda$-polyhedra are extension to $\RR$ of semilinear sets (see \Cref{subsec:van_den_Dries}).

\begin{proposition}
\label{Prop:TropPolyhedralStructure}
    Let $\fK \prec \fF$ and $\KK \subset \FF$ be as in Assumption~\ref{ass:elementary_extensions}.
    For any definable set $X \subset \KK^n_{>0}$, the closure of $\val(X_{\fF})$ in $\mathbb{R}^n$, i.e. $\Trop(X)$, is a finite union of $\Gamma_{\KK}$-affine $\Lambda$-polyhedra.
\end{proposition}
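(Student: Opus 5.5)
The plan is to exploit that $X$ is defined over $\KK$, so $\val(X_\fF)$ is a semilinear set in $\Gamma_\FF^n$ defined over the image of $\KK$. Then we pass to the closure in $\RR^n$.

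\emph{Step 1.} By \Cref{subsec:van_den_Dries}, the induced structure on the value group is o-minimal, and its definable sets are exactly the semilinear sets of the ordered $\Lambda$-vector space $\Gamma_\FF$ (\cite[Th.~B]{driesTConvexityTameExtensions1997}). Since $X$ is $\KK$-definable, $\val(X_\fF)$ is definable with parameters from $\Gamma_\KK$. This uses the parameters of $X$ together with the description of the value-group structure via images of $\emptyset$-definable functions, applied to the family of which $X$ is a fiber. Hence $\val(X_\fF)$ is a finite union of sets of the form
\[
\{\, p\in\Gamma_\FF^n \mid \lambda_i\cdot p + w_i \le 0,\ \mu_j\cdot p+v_j<0,\ \nu_k\cdot p+u_k=0\,\},
\]
with $\lambda_i,\mu_j,\nu_k\in\Lambda^n$ and $w_i,v_j,u_k$ in the $\Lambda$-span of $\Gamma_\KK$. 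If $\Gamma_\KK$ is itself a $\Lambda$-vector space (e.g.\ by \Cref{rem:exponets_inside_value_group} when nontrivial), these constants lie in $\Gamma_\KK$. I would check this point carefully, and in the trivially valued case the constants are $0$. Equalities are written as two inequalities.

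\emph{Step 2.} I take closures in $\RR^n$ cell by cell; the closure of a finite union is the union of the closures. For each piece $C\cap\Gamma_\FF^n$, where $C\subset\RR^n$ is the corresponding real set, the goal is $\overline{C\cap\Gamma_\FF^n}=\overline{C}$. Moreover, $\overline C$ should equal the closed polyhedron obtained by relaxing the strict inequalities, whenever $C\cap\Gamma_\FF^n\ne\emptyset$.

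\emph{Step 3 (main obstacle).} The first part needs density of $\Gamma_\FF$-points in $C$. Since $\FF$ is nontrivially valued and real closed, $\Gamma_\FF$ is a divisible nonzero subgroup of $\RR$, hence dense. Take a point $p_0\in C\cap\Gamma_\FF^n$. The affine hull of $C$ is cut out by $\Lambda$-rational equations $\nu_k\cdot p=-u_k$ through $p_0$. Its direction space $V=\{\nu_k\cdot d=0\}$ is a $\Lambda$-rational subspace, so it has a basis in $\Lambda^n\subset\QQ$-span considerations. More precisely, $V\cap\Lambda^n$ spans $V$, since $V$ is defined over $\Lambda$. Then $p_0+\sum \gamma_i e_i$ with $\gamma_i\in\Gamma_\FF$ and $e_i\in V\cap\Lambda^n$ lies in $\Gamma_\FF^n$, because $\Gamma_\FF$ is a $\Lambda$-vector space. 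These points are dense in $p_0+V$, and $C$ is relatively open in $p_0+V$. Hence $C\cap\Gamma_\FF^n$ is dense in $C$.

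\emph{Step 4.} Finally, for a nonempty relatively open polyhedron $C$ inside its affine hull, $\overline C$ is the set given by the nonstrict inequalities and equations. This holds by convexity: segments from an interior point. That set is a $\Gamma_\KK$-affine $\Lambda$-polyhedron, which gives the claim.
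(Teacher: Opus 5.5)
Your route differs from the paper's. The paper runs a cell decomposition of $X$ over $\KK$ through the polynomially bounded preparation theorem. Every constant that appears is then the valuation of an element of $\KK$, so it lands in $\Gamma_\KK$ automatically (and is $0$ when $\KK$ is trivially valued). You instead quote the semilinear description of the value-group structure and compute the closure by hand. Steps 2--4 are essentially fine, and more careful than the paper's one-line appeal to density. One slip: ``$C$ is relatively open in $p_0+V$'' fails once nonstrict inequalities are present, and after you rewrite equalities as pairs of inequalities there are no $\nu_k$ left. This is repaired by a semilinear cell decomposition, whose cells are relatively open in $\Lambda$-rational affine subspaces, or by passing to relative interiors.

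The gap is in Step 1. The claim that $\val(X_\fF)$ is definable with parameters from $\Gamma_\KK$ is exactly the ``$\Gamma_\KK$-affine'' content of the statement, and the justification you sketch does not work. The fiber over $\val(b)$ of the $\val$-image of the $\emptyset$-definable family $\{(b',x)\mid x\in X_{b'}\}$ is $\bigcup_{\val(b')=\val(b)}\val(X_{b'})$, not $\val(X_b)$. For instance, with $Y_b=\{x>0\mid x\ge |b-1|\}$ one has $\val((Y_2)_\fF)=\{w\le 0\}$ but $\val((Y_1)_\fF)=\Gamma_\FF$, although $\val(2)=\val(1)=0$.

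If $\KK$ is nontrivially valued, the claim can be rescued. Model completeness of $T_{\mathrm{convex}}$ \cite{driesTConvexityTameExtensions1995} gives $(\fK,\cO_\KK)\preceq(\fF,\cO_\FF)$. If a semilinear formula $\psi(p,c)$ with $c\in\Gamma_\FF^m$ defines $\val(X_\fF)$, then the set of $c'$ for which $\psi(\cdot,c')$ defines $\val(X)$ is $\KK$-definable and nonempty, hence has a point in $\Gamma_\KK^m$. Also, the fact that $\Gamma_\KK$ is a $\Lambda$-vector space comes from $\val(a^\lambda)=\lambda\val(a)$ for $a\in\KK_{>0}$. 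It does not follow from \Cref{rem:exponets_inside_value_group}, which only gives $\Lambda\subset\Gamma$.

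If $\KK$ is trivially valued, this argument is unavailable, because $(\KK,\KK)$ is not a model of $T_{\mathrm{convex}}$. Your sentence ``the constants are $0$'' then asserts exactly what must be proved, namely that $\Trop(X)$ is a finite union of $\Lambda$-rational polyhedral cones. This needs an extra idea. One option is to work in $\fK\langle t\rangle\preceq\fF$ (value group $\Lambda$) to get constants in $\Lambda$. Then use the $\KK$-automorphisms $t\mapsto t^\mu$ ($\mu\in\Lambda_{>0}$) of $\fK\langle t\rangle$, which multiply valuations by $\mu$, to show that $\Trop(X)$ is a cone. The other option is the paper's computation, in which the constants are valuations of elements of $\KK$.
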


\begin{proof}
The proof follows by combining the polynomially bounded o-minimal preparation theorem (see \Cref{thm:preparation_theorem}) with the o-minimal cell decomposition theorem (see \cite[p.~52]{driesTameTopologyOminimal1998} or \cite[Th.~2.10]{costeIntroductionOminimalGeometry1999}). A \emph{cell} is a definable set inductively defined as follows. For $\ell=1$, a cell is simply a singleton or an interval in $\KK = \KK^1$. For $\ell>1$, a cell in $\KK^\ell$ can be the graph of a continuous definable function $h \colon D \subset \KK^{\ell-1} \to \FF$, where $D$ is a cell in $\KK^{\ell-1}$, or the set of points lying between the graphs of two continuous definable function $f_1,f_2 \colon D \subset \KK^{\ell-1} \to \FF$ satisfying $f_1<f_2$ on the cell $D$. The cell decomposition theorem asserts that every definable set can be written as disjoint union of cells.

Let $f_1,f_2$ be definable functions as above appearing in a cell decomposition of $X \subset \KK^n_{>0}$. Refining the cell decomposition if necessary, we can assume that $f_1,f_2$ can be written as in \Cref{thm:preparation_theorem}:
\[
    f_i(s,a) = \abs{s-r_i(a)}^{\mu_i} c_i(a) u_i(s,a)
\]
for all $(s,a) \in D \subset \KK^{n-1}$, where $\epsilon \in \QQ_{>0}$ is small, $\mu_i \in \Lambda = \Lambda(\fK)$, $r_i, c_i$ are continuous $(n-2)$-ary functions definable over $\KK$, and $u_i$ is a $(n-1)$-ary function definable over $\KK$ such that $1-\epsilon<u<1+\epsilon$ on $D$. The cell decomposition is compatible with field extensions, and we use the same notation for the extension of the functions to $\fF$. Since $1-\epsilon<u_i<1+\epsilon$ on $D_\fF$, and  we have that $\val_\FF(u_i(s,a)) = 0$ on $D_\fF$. Hence:
\[
    \val_\FF(f_i(s,a)) = \mu_i \val_\FF(s-r_i(a)) + \val_\FF(c_i(a))
\]
on $D_\fF$. We can proceed by induction on $n$ (since $r_i$ and $c_i$ both depend on $n-2$ variables), and get an expression of the form $\mu \cdot p + v$ for some $\mu \in \Lambda^{n}$ and $v \in \Gamma_{\KK}^n$ (notice that a $0$-ary function is just a constant, and since $c_i$ is defined over $\KK$, we eventually get $c_i \in \KK$ and $\val_\FF(c_i) = \val_\KK(c_i) \in \Gamma_\KK$). By taking valuation on both sides in $f_1 < f_2$, we thus get a formula of the form $\lambda \cdot p + w \le 0$ for some $\lambda \in \Lambda^n$ and $w \in \Gamma_\KK$. The same computation shows that cells defined as the graph of $h$ as above give formulas of the form $\lambda \cdot p + w = 0$. By taking unions of all such cells, we have that 
$\val(X_\fF)$ can be written as union of sets of the form
\begin{align}
\label{Eq:ProofTropPolyhedralStructure}
\left\{\,p \in \Gamma_\FF^n \mid \lambda_i \cdot p +w_i = 0 \text { for } i=1,2, \ldots, r \text{ and } \lambda_j \cdot p +w_j \le 0 \text { for } j=r+1,2, \ldots, m\,\right\},
\end{align}
for some $\lambda_1,\dots,\lambda_m\in \Lambda^n$ and $w_1,\dots,w_m \in \Gamma_{\KK}$.
Since $\Gamma_\FF^n$ is dense in $\mathbb{R}^n$, it follows that the closure of the sets in~\eqref{Eq:ProofTropPolyhedralStructure} are $\Gamma_\KK$-affine $\Lambda$-polyhedra.
\end{proof}

\begin{lemma}
\label{Lemma:ComplementFiniteUnion}
Let $T$ be a finite union of $\Gamma$-affine $\Lambda$-polyhedra in $\mathbb{R}^n$. Then the complement of $T$ is a finite union of open $\Gamma$-affine $\Lambda$-polyhedra.
\end{lemma}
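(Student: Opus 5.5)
The plan is to reduce the lemma to elementary set algebra plus one closure property of the class of open $\Gamma$-affine $\Lambda$-polyhedra. Write $T = P_1 \cup \dots \cup P_k$, where each
\[
P_j = \{\, p \in \RR^n \mid \lambda_{j,i}\cdot p + w_{j,i} \le 0 \text{ for } i \in [m_j] \,\}
\]
with $\lambda_{j,i} \in \Lambda^n$ and $w_{j,i} \in \Gamma$. By De Morgan,
\[
\RR^n \setminus T = \bigcap_{j=1}^k (\RR^n \setminus P_j), \qquad \RR^n \setminus P_j = \bigcup_{i=1}^{m_j} \{\, p \mid \lambda_{j,i}\cdot p + w_{j,i} > 0 \,\}.
\]
Each half-space $\{ \lambda_{j,i}\cdot p + w_{j,i} > 0\}$ equals $\{ (-\lambda_{j,i})\cdot p + (-w_{j,i}) < 0 \}$. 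Since $\Lambda$ and $\Gamma$ are groups, $-\lambda_{j,i} \in \Lambda^n$ and $-w_{j,i} \in \Gamma$, so each such half-space is an open $\Gamma$-affine $\Lambda$-polyhedron with a single defining inequality.

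Next I would distribute the intersection over the unions:
\[
\RR^n \setminus T = \bigcup_{(i_1,\dots,i_k) \in [m_1]\times\dots\times[m_k]} \ \bigcap_{j=1}^k \{\, p \mid -\lambda_{j,i_j}\cdot p - w_{j,i_j} < 0 \,\}.
\]
Each inner intersection is defined by finitely many strict inequalities with coefficients in $\Lambda^n$ and constants in $\Gamma$, so it is by definition an open $\Gamma$-affine $\Lambda$-polyhedron. The index set is finite, so this expresses the complement as a finite union, which is the claim.

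Two degenerate cases need a remark. If $k=0$, then $T=\emptyset$ and $\RR^n$ is the open polyhedron given by the empty system (or by $0\cdot p + w < 0$ for any $w<0$ in $\Gamma$, if one insists on $m \ge 1$). If some $m_j=0$, then $P_j=\RR^n$, so the complement is empty, which is the empty union.

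There is no real obstacle; the only point needing care is that $\Lambda$ and $\Gamma$ are closed under negation, which holds since they are subgroups of $\RR$.
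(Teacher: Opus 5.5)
Your proof is correct and is essentially the paper's route: the paper simply defers to the proof of Lemma~6.7 in Jell--Scheiderer--Yu, which is this De Morgan-plus-distribution argument, and remarks that integer coefficients can be replaced by coefficients in $\Lambda$. As you observe, the only extra point needed is closure of $\Lambda$ and $\Gamma$ under negation.

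One small caveat: your parenthetical alternative $0\cdot p + w<0$ with $w<0$ in $\Gamma$ needs $\Gamma\neq\{0\}$, which can fail here since $\KK$ may be trivially valued. For $T=\emptyset$ you should therefore rely on the empty system.
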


\begin{proof}
The proof is identical to the proof of \cite[Lem.~6.7]{jellRealTropicalizationAnalytification2020}. Since the argument there does not rely on the defining linear equations of $T$ having coefficients in $\ZZ$, one may replace them by coefficients from $\Lambda$.
\end{proof}

\begin{lemma}
\label{Lemma:OpenNegTropPoly}
Let $\Lambda$ and $\Gamma$ be subgroups of $\RR$ and consider the real tropical hyperfield $\mathbb{RT} = \mathbb{S} \rtimes \RR$.
 For any open $\Gamma$-affine $\Lambda$-polyhedron $P \subset \RR^n$, there exists a polynomial $F \in \mathbb{S} \rtimes \Gamma[\overline{\vb w}^\Lambda] \subset  \mathbb{RT}[\overline{\vb w}^\Lambda]$ with $P = \{ w \in \RR^n \mid F(1,w) < 0 \}$.
\end{lemma}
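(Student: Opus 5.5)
Write $P = \{ p \in \RR^n \mid \lambda_i \cdot p + w_i < 0 \text{ for } i \in [m]\}$ with $\lambda_i \in \Lambda^n$ and $w_i \in \Gamma$. The plan is to do the case $m=1$ with a binomial, and then combine the $m$ binomials into one polynomial whose hypersum is dominated by its minimal term.

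\emph{One inequality.} Consider the real tropical binomial
\[
F_i \coloneqq (1,0)\odot \overline{\vb w}^{0} \;\oplus\; (-1,-w_i)\odot \overline{\vb w}^{-\lambda_i}.
\]
At $(1,p)$ its two terms are $(1,0)$ and $(-1,\, w_i - \lambda_i\cdot p)$ (I will double-check the sign conventions so that the negative term has the right valuation). By the hyperaddition rules of $\mathbb{RT}$:
\begin{itemize}
\item if the negative term has strictly smaller valuation, the hypersum is that single negative element, so $F_i(1,p)<0$;
\item otherwise the hypersum contains either $(1,0)$ or $\infty$, so $F_i(1,p)\ge 0$.
\end{itemize}
After fixing orientation, "negative term has smaller valuation" means $w_i-\lambda_i\cdot p<0$. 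So I instead take the negative term $(-1,-w_i)\odot\overline{\vb w}^{-\lambda_i}$ evaluated as $(-1,-w_i-\lambda_i\cdot p)$. Then $F_i(1,p)<0$ iff $-w_i-\lambda_i\cdot p<0$. This has the wrong direction, so I switch to $(-1,w_i)\odot\overline{\vb w}^{\lambda_i}$. This gives $F_i(1,p)<0$ iff $w_i+\lambda_i\cdot p<0$, as desired.

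\emph{Several inequalities.} For the intersection I need an $F$ that is negative iff all the $F_i$ are. Rescaling a tropical polynomial by a monomial does not change its sign. Use
\[
F \coloneqq (1,0)\odot \overline{\vb w}^{0} \oplus \bigoplus_{S} (-1, \cdot)\ldots
\]
or, more simply, the product $F = F_1\odot\cdots\odot F_m$ expanded as a polynomial. The cleaner choice is to make the unique positive term dominate nothing, and require every negative term to beat it. Concretely, take
\[
F \coloneqq (1,0) \oplus \bigoplus_i (-1, w_i)\odot\overline{\vb w}^{\lambda_i},
\]
which is negative iff $\min_i(w_i+\lambda_i\cdot p)<0$. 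That describes the union of the half-spaces, not their intersection, so it is the wrong tool.

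To get the intersection I use a dominating-sum trick. Take
\[
F \coloneqq (-1,0) \oplus \bigoplus_i (1,-w_i)\odot\overline{\vb w}^{-\lambda_i}.
\]
By Lemma~\ref{Lemma:TropExtensionSumMinimum} the hypersum only involves the terms of minimal valuation, so $F(1,p)$ is negative iff the term $(-1,0)$ is the strict unique minimum. That means $0<-w_i-\lambda_i\cdot p$ for all $i$, i.e.\ $\lambda_i\cdot p+w_i<0$ for all $i$, which is exactly $P$. When there is a tie, or a positive term is smaller, the hypersum contains $\infty$ or a positive element, so $F(1,p)\ge0$. The coefficients lie in $\mathbb{S}\rtimes\Gamma$ because $\Gamma$ is a group, and the exponents lie in $\Lambda^n$.

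The main point to verify carefully is the case analysis of the hypersum via Lemma~\ref{Lemma:TropExtensionSumMinimum} together with the convention \eqref{Eq:OrderRealTrop}. A tie between opposite signs produces $\infty$, hence $\ge 0$. A tie among same-sign terms gives that sign. For $P=\emptyset$ or $m=0$, the construction still works: with $m=0$ we get $F=(-1,0)$, which is always negative, matching $P=\RR^n$.
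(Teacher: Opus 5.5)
Your final construction $F=(-1,0)\oplus\bigoplus_{i}(1,-w_i)\odot\overline{\vb w}^{-\lambda_i}$ is correct and is exactly the polynomial used in the paper (there written with $\varepsilon_0=-1$, $v_0=0$, $\lambda_0=0$). The argument is also the paper's: by \Cref{Lemma:TropExtensionSumMinimum}, $F(1,p)<0$ precisely when the single negative term $(-1,0)$ is the strict unique minimum, i.e.\ when $\lambda_i\cdot p+w_i<0$ for all $i$ (in a write-up, simply drop the discarded binomial and union-of-half-spaces detours).
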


\begin{proof}
Let $\lambda_1, \ldots, \lambda_m \in \Lambda^n$ and $v_1, \ldots, v_m \in \Gamma$ such that
$$
P = \left\{ \,w \in \mathbb{R}^n \mid \lambda_i \cdot w+v_i<0 \text { for } i \in [m] \,\right\} = \{ \,w \in \mathbb{R}^n \mid \max_{i\in [m]} (\lambda_i \cdot w+v_i)<0  \,\}.
$$
Set $v_0 = 0$, $\lambda_0 = 0, \varepsilon_0 = -1$ and $\varepsilon_i = 1$ for $i \in [m]$. Consider the polynomial $F = \bigoplus_{i=0}^m (\varepsilon_i,-v_i) \odot \overline{w}^{\;-\lambda_i}  \in \mathbb{RT}[ \overline{\vb w}^\Lambda]
$. Lemma~\ref{Lemma:TropExtensionSumMinimum} implies that
\begin{align*}
 F(1,w) =  \bigoplus_{i=0}^m (\varepsilon_i, -v_i -\lambda_i \cdot w)  =  \bigoplus_{\substack{i=0,\dots,m\\   -v_i -\lambda_i \cdot w =  \min_{j=0,\dots,m}(-v_j -\lambda_j \cdot w) }} (\varepsilon_i, -v_i -\lambda_i \cdot w).
\end{align*}
Therefore, we have $F(1,w) \cap (\{1\} \times \RR \cup \{0\}) = \emptyset$ if and only if 
$$-v_0 - \lambda_0 \cdot w = 0< \min_{i\in [m]}(-v_i -\lambda_i \cdot w) = -\max_{i\in [m]} (\lambda_i \cdot w+v_i),$$
finishing the proof. 
\end{proof}

\begin{lemma}
\label{Lemma:LiftingPolynomialInequalities}
    Let $\fK \prec \fF$ and $\KK \subset \FF$ be as in Assumption~\ref{ass:elementary_extensions}, and let $\Lambda = \Lambda(\fK)$ be the field of exponents.
    Let $X \subset \mathbb{K}^{n}_{>0}$ be a definable set and let $F  \in \mathbb{RT}[\overline{w}^\Lambda]$ be polynomial with coefficients in $\mathbb{S} \rtimes \Gamma_\KK$ such that $F(1,w) \geq 0$ for all $w \in \val(X_\fF)$. Then there exists a polynomial $f \in \mathcal{P}( X) \subset \KK[\vb x^\Lambda]$  such that $\trop_r(f) = F$.  
\end{lemma}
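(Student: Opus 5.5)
The natural choice is the obvious lift of $F$, the one used in \cite[Lem.~6.8]{jellRealTropicalizationAnalytification2020}. Write
\[
F = \bigoplus_{\lambda} (\varepsilon_\lambda, v_\lambda) \odot \overline{\vb w}^{\lambda}, \qquad \varepsilon_\lambda \in \{\pm 1\},\; v_\lambda \in \Gamma_\KK,
\]
and choose $d_\lambda \in \KK_{>0}$ with $\val(d_\lambda) = v_\lambda$; this is possible because $v_\lambda \in \Gamma_\KK$. Put
\[
f_s \coloneqq \sum_{\varepsilon_\lambda = 1} d_\lambda \vb x^\lambda \;-\; s\sum_{\varepsilon_\lambda=-1} d_\lambda \vb x^\lambda ,
\]
where $s \in \KK_{>0}$ is a parameter with $\val(s)=0$ (for instance $s$ a small rational). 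For every such $s$ we have $\trop_r(f_s) = F$ as formal real tropical polynomials, so it remains to find an $s$ with $f_s \ge 0$ on $X$. Writing $f_s = P - sN$ with $P,N$ having positive coefficients, the goal becomes $N(a) \le s^{-1}P(a)$ for all $a \in X$.

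\textbf{Step 1: valuation comparison over $\fF$.} Take $b \in X_\fF$ and set $w = \val(b) \in \val(X_\fF)$. The hypothesis $F(1,w)\ge 0$ says, by \Cref{Lemma:TropExtensionSumMinimum}, that among the monomials attaining the minimum $W = \min_\lambda (v_\lambda + \lambda\cdot w)$ at least one has sign $+1$. The alternative case, where the hypersum contains $\infty$, also forces a $+1$ term at the minimum, since otherwise all minimal terms have sign $-1$ and the hypersum is $\{(-1,W)\}$. All terms of $P(b)$ are positive, so no cancellation occurs and $\val(P(b)) = \min_{\varepsilon_\lambda=1}(v_\lambda+\lambda\cdot w) = W$. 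On the other hand $\val(N(b)) \ge W$. Hence $N(b)/P(b) \in \cO_\FF$ for every $b \in X_\fF$, and the definable function $g = N/P$ on $X_\fF$ takes values in $\cO_\FF$.

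\textbf{Step 2: transfer to $\KK$.} This is the step I expect to be the main obstacle. We need a single $s \in \KK_{>0}$ of valuation $0$ with $g \le s^{-1}$ on $X$, and the difficulty is turning ``$g$ is bounded by an element of $\cO_\FF$'' into a bound by a standard constant. I would argue as follows. The set $g(X) \subset \KK$ is definable over $\KK$, so by o-minimality $\sup g(X) = c$ exists in $\KK \cup \{+\infty\}$. Its extension satisfies $\sup g(X_\fF) = c$ by elementarity. If $c = +\infty$ or $\val(c) < 0$, then $g(X_\fF)$ contains elements of negative valuation, which contradicts Step 1. So $c \in \cO_\KK$, and we can take $s^{-1} = \max(c,1)+1$. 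This gives $\val(s)=0$, and therefore $f_s \ge 0$ on $X$.

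One technical point remains: if the valuation on $\KK$ is trivial, the $d_\lambda$ have valuation $0$, and the argument goes through unchanged since $\cO_\KK = \KK$ and the bound is automatic. I would also note that $f_s \in \KK[\vb x^\Lambda]$ is a legitimate element, because power functions $x^\lambda$ with $\lambda\in\Lambda(\fK)$ are $\KK$-definable. The crux to check carefully is Step 1's use of $\val(b)$ lying in $\val(X_\fF)$, which is exactly the hypothesis, combined with Step 2's sup-argument. The sup-argument is where elementarity $\fK\prec\fF$ and a nontrivial valuation on $\FF$ are essential.
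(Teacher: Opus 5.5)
Your proof is correct and follows essentially the same route as the paper. Both use the lift $P - sN$ from \cite[Lem.~6.8]{jellRealTropicalizationAnalytification2020}, the valuation comparison on $X_\fF$ via \Cref{Lemma:TropExtensionSumMinimum}, and an extremum of a $\KK$-definable ratio, transferred by elementarity, to fix a constant of valuation $0$. The only difference is a small simplification: you bound $N/P$ from above and take $s^{-1}=\max(c,1)+1$, which avoids the paper's case distinction and its argument that the infimum of $P/N$ has valuation exactly $0$.
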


\begin{proof}
Distinguishing the signs of its coefficients, we write $F$ as  $\bigoplus_{i=1}^r (1,v_i) \odot \overline{w}^{\lambda_i} \oplus  \bigoplus_{j=r+1}^m (-1,u_j) \odot \overline{w}^{\lambda_j}$ for some $v_1,\dots,v_r,u_{r+1},\dots,u_m \in \Gamma_\KK$ and $\lambda_1,\dots,\lambda_m \in \Lambda$. Choose $a_1,\dots,a_r,b_{r+1},\dots,b_m \in \mathbb{K}_{>0}$ such that  $\val(a_i) = v_i$, $\val(b_j) = u_j$ for each $i=1,\dots,r, \, j=r+1,\dots m$.

Following the proof of \cite[Lem.~6.8]{jellRealTropicalizationAnalytification2020}, we consider the definable map
\[\varphi \colon X \to \mathbb{K}, \quad x \mapsto \tfrac{\sum_{i=1}^r a_i x^{\lambda_i}}{\sum_{j=r+1}^m b_j x^{\lambda_j} },\]
and its extension $\varphi_\fF \colon X_\fF \to \mathbb{F}$. Since $X_\fF \subset \FF_{>0}^n$ and $a_i,b_j > 0$, we have $\varphi_\fF(x) > 0$ for all $x \in X_\fF$, 

If for some $x\in X_\fF$ we have $0 < \val( \varphi_\fF(x)) = \val\big( \sum_{i=1}^r a_i x^{\lambda_i}\big) - \val\big(\sum_{j=r+1}^m b_j x^{\lambda_j} \big)$, then the ultrametric inequality and Lemma~\ref{Lemma:TropExtensionSumMinimum} imply that
\begin{align*}
F(1,\val(x)) &=  \bigoplus_{i=1}^r (1,v_i) \odot (1,\val(x))^{\lambda_i} \oplus  \bigoplus_{j=r+1}^m (-1,u_j) \odot (1,\val(x))^{\lambda_j} \\
&=\bigoplus_{i=1}^r \sval(a_i x^{\lambda_i}) \oplus  \bigoplus_{j=r+1}^m \sval(-b_j x^{\lambda_j}) = \bigoplus_{j=r+1}^m \sval(-b_j x^{\lambda_j})  < 0,
\end{align*}
which contradicts our assumption $F(1,\val(x)) \geq 0$.

Thus, we have $0 \geq \val( \varphi_\fF(x)) $ for all $x \in X_\fF$.
If $0 > \val( \varphi_\fF(x))$ for all $x \in X_\fF$, then by order compatibility of the valuation map, cf.~\eqref{Eq:OrderCompatibleValuation}, we have $\varphi_\fF(x) > \varepsilon$ for any $\varepsilon\in \KK_{>0}$ with $\val(\varepsilon) = 0$. Therefore the polynomial
\begin{align}
\label{Eq:Def:Fepsilon}
f_\varepsilon(x) \coloneqq \sum_{i=1}^r a_i x^{\lambda_i} - \varepsilon \sum_{j=r+1}^m b_j x^{\lambda_j} \in \mathbb{K}[\vb x^\Lambda]
\end{align}
is nonnegative on $X_\fF$ and $\trop_r(f_\varepsilon) = F$.

In the rest of the proof, we consider the case when there exists $x_* \in X_\fF$ with  $\val( \varphi_\fF(x_*)) = 0$.
Let $\alpha \in \KK$ be the infimum of $\varphi$ on $X$.
Since the statement that $\alpha$ is the infimum of $\varphi$ on $X$ is expressible using formulas in  the language of $\fK$, and since $\fK \prec \fF$, it follows that $\alpha$ is also the infimum of $\varphi_\fF$ over $X_\fF$.

By order compatibility of the valuation map, from $\val(\varphi_\fF(X_\fF)) \leq 0 < 1 = \val(t)$ it follows that $0 < t < \varphi_\fF(X_\fF)$. In particular, $\alpha > 0$. Moreover, we have that $\val(\alpha) = 0$. To justify this claim, suppose that $\val(\alpha) \neq 0$. 

If $\val(\alpha)  < 0 = \val(\varphi_\fF(x_*))$, then $\alpha > \varphi_\fF(x_*)$ by order compatibility of the valuation map. Thus, we must have  $\val(\alpha)  > 0$. But then for $\alpha' \coloneqq t^{-\tfrac{1}{N}} \alpha$ we have $\val(\alpha') = \val(\alpha) - \tfrac{1}{N} > 0$ for some large enough $N \in \mathbb{N}$. In particular, $\alpha' < \varphi_\fF(x)$ for all $x \in X_\fF$ and $\alpha < \alpha'$, which is a contradiction.
Now, set $\varepsilon = \alpha$, and consider $f_\varepsilon$ as in \eqref{Eq:Def:Fepsilon}, for which we have that $f_\varepsilon \in \mathcal{P}(X_\fF) \cap \KK[\vb x^\Lambda]$ and $\trop_r(f_\varepsilon) = F$.
To finish the proof, we notice that {$\mathcal{P}(X_\fF) \cap \KK[\vb x^\Lambda] = \mathcal{P}(X)$.} Indeed, for $f \in \KK[\vb x^\Lambda]$, we have $\fK \models \forall x\in X\colon f(x) \ge 0$ if and only if $\fF \models  \forall x\in X_\fF\colon f_\fF(x) \ge 0$ by definition of elementary extension.
\end{proof}

The next lemma will be used not only in the proof of \Cref{Thm:WeakFundamentalTrop}, but also in the proof of \Cref{thm:fundamental_theorem}.

\begin{lemma}
\label{Lemma:WeakFundamentalTropOneInclusion}
 Let $\fK \prec \fF$ and $\KK \subset \FF$ be as in Assumption~\ref{ass:elementary_extensions}. For any definable subset $X \subset \KK^n_{>0}$, there exist polynomials $f_1,\dots,f_k \in \mathcal{P}(X)$ such that
    \begin{align*}
 \overline{\val(X_\fF)} = \bigcap_{i=1}^k     \{ w \in \RR^n \mid \trop_r(f_i)(1,w) \geq 0\}.
     \end{align*}
\end{lemma}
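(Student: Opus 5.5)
The strategy is to follow \cite[Th.~6.9]{jellRealTropicalizationAnalytification2020}, combining the polyhedral structure of $\Trop(X)$ with the lifting of tropical inequalities. Set $T \coloneqq \overline{\val(X_\fF)}$.

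\emph{Step 1: the complement of $T$ is a finite union of open polyhedra.} By \Cref{Prop:TropPolyhedralStructure}, $T$ is a finite union of $\Gamma_\KK$-affine $\Lambda$-polyhedra. \Cref{Lemma:ComplementFiniteUnion} then gives a decomposition $\RR^n \setminus T = P_1 \cup \dots \cup P_k$, where each $P_i$ is an open $\Gamma_\KK$-affine $\Lambda$-polyhedron. If $T = \RR^n$, we take $k=1$ and $f_1 = 1$, so both sides equal $\RR^n$.

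\emph{Step 2: realize each $P_i$ by a tropical polynomial.} By \Cref{Lemma:OpenNegTropPoly}, for each $i$ there is $F_i \in \mathbb{S}\rtimes\Gamma_\KK[\overline{\vb w}^\Lambda]$ with
\[P_i = \{\, w \in \RR^n \mid F_i(1,w) < 0 \,\}.\]
Since $P_i \cap T = \emptyset$ and $\val(X_\fF) \subset T$, we get $F_i(1,w) \geq 0$ for all $w \in \val(X_\fF)$. The hypotheses of \Cref{Lemma:LiftingPolynomialInequalities} are therefore satisfied. It yields $f_i \in \mathcal{P}(X) \subset \KK[\vb x^\Lambda]$ with $\trop_r(f_i) = F_i$.

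\emph{Step 3: check the equality.} For $w \notin T$, some $i$ has $w \in P_i$, so $\trop_r(f_i)(1,w) < 0$ and $w$ is not in the intersection. This proves the inclusion $\supseteq$.

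For $\subseteq$, take $w \in T$. Then $w \notin P_i$ for every $i$, and by the construction of $F_i$ this means $\trop_r(f_i)(1,w) = F_i(1,w) \geq 0$. Because the condition ``$\geq 0$'' is the logical negation of ``$<0$'' by \eqref{Eq:OrderRealTrop}, this holds for all $i$. So $w$ lies in the intersection, even when $w \in T \setminus \Gamma_\FF^n$.

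The direction $\subseteq$ is therefore immediate from the choice of the $P_i$, and the real content lies in the two cited ingredients: the polyhedral structure over $\Gamma_\KK$ with exponents in $\Lambda$, and the lifting lemma. The main point to check is compatibility of coefficient groups. \Cref{Lemma:OpenNegTropPoly} must be applied with $\Gamma = \Gamma_\KK$ so that the $F_i$ have coefficients in $\mathbb{S}\rtimes\Gamma_\KK$, as \Cref{Lemma:LiftingPolynomialInequalities} requires. This is why it matters that \Cref{Prop:TropPolyhedralStructure} produces $\Gamma_\KK$-affine polyhedra and not merely $\Gamma_\FF$-affine ones.

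A second subtlety arises when $\Gamma_\KK$ is trivial, for instance $\KK$ trivially valued. The polyhedra are then cones with offsets $0$, and the lifting lemma still applies, since elements of valuation $0$ such as $a_i = 1$ exist in $\KK$.
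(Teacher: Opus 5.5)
Your proposal is correct and is essentially the paper's proof. You describe $\overline{\val(X_\fF)}$ as a finite union of polyhedra via \Cref{Prop:TropPolyhedralStructure}, write its complement as a finite union of open $\Gamma_\KK$-affine $\Lambda$-polyhedra via \Cref{Lemma:ComplementFiniteUnion}, realize each of these by a real tropical polynomial via \Cref{Lemma:OpenNegTropPoly}, and lift these to $\mathcal{P}(X)$ via \Cref{Lemma:LiftingPolynomialInequalities}. Your explicit checks of the lifting hypothesis, of both inclusions, and of the case $T=\RR^n$ only spell out what the paper leaves implicit.
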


\begin{proof}
    To prove the equality, we adapt the argument showing $(3) \subset (4)$ in \cite[Th.~6.9]{jellRealTropicalizationAnalytification2020}. 
    By Proposition~\ref{Prop:TropPolyhedralStructure},
    $\overline{\val(X_\fF)}$ is a finite union of $\Gamma_\mathbb{K}$-affine $\Lambda$-polyhedra in $\RR^n$. 
   From Lemma~\ref{Lemma:ComplementFiniteUnion}, it follows that
    \[ \overline{\val(X_\fF)} = \RR^n \setminus \left(\cup_{i=1}^k P_i\right) \]
for some open $\Gamma_\KK$-affine $\Lambda$-polyhedra $P_1,\dots,P_k \subset \RR^n$.
By Lemma~\ref{Lemma:OpenNegTropPoly} there exist real tropical polynomials $F_1,\dots,F_k \in \mathbb{S} \rtimes \Gamma_\KK [\overline{\vb w}^\Lambda]$ such that $P_i = \{ w \in \RR^n \mid F_i(1,w) < 0 \}$ for each $i \in [k]$, which implies that 
\[ \overline{\val(X_\fF)} =  \bigcap_{i=1}^k\{ \, w \in \RR^n \mid F_i(1,w) \geq 0 \, \}. \]
By Lemma~\ref{Lemma:LiftingPolynomialInequalities}, there exist polynomials $f_1,\dots,f_k \in  \mathcal{P}(X)$ with $\trop_r(f_i) = F_i$ for $i \in [k]$, finishing the proof.
\end{proof}

With these technical lemmas in place, we are now ready to prove the main result of this section, which improves Corollary~\ref{Cor:WeakFundamentalTrop}.

\begin{theorem}
\label{Thm:WeakFundamentalTrop}
 Let $\fK \prec \fF$ and $\KK \subset \FF$ be as in Assumption~\ref{ass:elementary_extensions}.
 For a definable subset $X \subset \KK^n_{>0}$, the following sets are equal
    \begin{align*}
 \val(X_\fF) = \{ \, w \in \Gamma^n_\mathbb{F} \mid \initial_w(X_\fF) \neq \emptyset \, \} = \bigcap_{f \in \mathcal{P}(X)}     \{ w \in \Gamma_\mathbb{F}^n \mid \trop_r(f)(1,w) \geq 0\},
     \end{align*}
        and the intersection can be taken over finitely many polynomials.
\end{theorem}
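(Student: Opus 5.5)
The first equality comes directly from \Cref{Cor:WeakFundamentalTrop} applied to the set $X_\fF \subset \FF^n_{>0}$. Since $\FF$ is nontrivially valued, that corollary gives $\val(X_\fF) = \{ w \in \Gamma_\FF^n \mid \initial_w(X_\fF) \neq \emptyset\}$. It also gives the inclusion of $\val(X_\fF)$ into $\bigcap_{f \in \mathcal{P}(X_\fF)} \{ w \in \Gamma_\FF^n \mid \trop_r(f)(1,w)\ge 0\}$. We then note that $\mathcal{P}(X) = \mathcal{P}(X_\fF) \cap \KK[\vb x^\Lambda] \subset \mathcal{P}(X_\fF)$, as observed at the end of the proof of \Cref{Lemma:LiftingPolynomialInequalities}, using $\fK \prec \fF$. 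Intersecting over the smaller family $\mathcal{P}(X)$ gives a larger set, so
\[
\val(X_\fF) \subset \bigcap_{f \in \mathcal{P}(X)} \{ w \in \Gamma_\FF^n \mid \trop_r(f)(1,w)\ge 0\}.
\]

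For the reverse inclusion, and for finiteness, we use \Cref{Lemma:WeakFundamentalTropOneInclusion}. It gives $f_1,\dots,f_k \in \mathcal{P}(X)$ with $\overline{\val(X_\fF)} = \bigcap_{i=1}^k \{ w \in \RR^n \mid \trop_r(f_i)(1,w)\ge 0\}$. Intersecting both sides with $\Gamma_\FF^n$ yields
\[
\bigcap_{f\in\mathcal{P}(X)} \{w \in \Gamma_\FF^n \mid \trop_r(f)(1,w)\ge 0\} \subset \bigcap_{i=1}^k \{ w \in \Gamma_\FF^n \mid \trop_r(f_i)(1,w)\ge0\} = \overline{\val(X_\fF)} \cap \Gamma_\FF^n .
\]
One point needs checking here. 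The sets $\{\trop_r(f_i)(1,w)\ge 0\}$ in $\RR^n$ restricted to $\Gamma_\FF^n$ must agree with the conditions read in $\mathbb{RT}_\FF$. This holds because, by \Cref{Lemma:TropExtensionSumMinimum}, the condition only depends on the signs of the coefficients attaining the minimum of $\val(c_\lambda)+\lambda\cdot w$. That is the same whether the value group is $\Gamma_\FF$ or $\RR$.

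The final step is $\overline{\val(X_\fF)} \cap \Gamma_\FF^n = \val(X_\fF)$, which is \Cref{cor:ValImageClosure} applied to the definable set $X_\fF$ in $\fF$. This closes the chain of inclusions. It also shows that the intersection over $\mathcal{P}(X)$ equals the finite intersection over $f_1,\dots,f_k$, since the chain is sandwiched between the two.

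The main obstacle is not in this proof itself. The substantive inputs, namely the polyhedral structure (\Cref{Prop:TropPolyhedralStructure}), the lifting of tropical inequalities (\Cref{Lemma:LiftingPolynomialInequalities}), and closedness of $\val(X_\fF)$ in $\Gamma_\FF^n$ (\Cref{cor:ValImageClosure}), are already available. The only care needed is the bookkeeping just described: the passage between $\mathcal{P}(X)$ and $\mathcal{P}(X_\fF)$, and between $\RR^n$ and $\Gamma_\FF^n$.
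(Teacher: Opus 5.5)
Your proposal is correct and is essentially the paper's own proof. The paper uses the same sandwich $\val(X_\fF) = \overline{\val(X_\fF)}\cap\Gamma_\FF^n = \bigcap_{i=1}^k\{\trop_r(f_i)(1,w)\ge 0\} \supset \bigcap_{f\in\mathcal{P}(X)}\{\cdots\} \supset \bigcap_{f\in\mathcal{P}(X_\fF)}\{\cdots\} \supset \val(X_\fF)$, built from \Cref{Cor:WeakFundamentalTrop}, \Cref{Lemma:WeakFundamentalTropOneInclusion} and \Cref{cor:ValImageClosure}; your extra check that $\trop_r(f_i)(1,w)\ge 0$ means the same in $\mathbb{RT}_\FF$ and over $\RR$ for $w\in\Gamma_\FF^n$ is a harmless clarification that the paper leaves implicit.
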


\begin{proof}
The first equality holds by Corollary~\ref{Cor:WeakFundamentalTrop}. 
By Lemma~\ref{Lemma:WeakFundamentalTropOneInclusion}, there exist polynomials $f_1,\dots,f_k \in  \mathcal{P}(X)$ such that
\begin{align*}
 \val(X_\fF) &\overset{(1)}{=} \overline{ \val(X_\fF)} \cap \Gamma_\FF^n =  \bigcap_{i=1}^k  \{ w\in \Gamma_\FF^n \mid \trop_r(f_i)(1,w) \geq 0\}  \supset \bigcap_{f \in \mathcal{P}(X)}     \{  w\in \Gamma_\FF^n \mid \trop_r(f)(1,w) \geq 0 \}  \\
& \supset \bigcap_{f \in \mathcal{P}(X_\fF)}     \{  w\in \Gamma_\FF^n \mid \trop_r(f)(1,w) \geq 0 \} \supset \val(X_\fF),
\end{align*}
where $(1)$ holds by Corollary~\ref{cor:ValImageClosure} and the last inclusion follows from Corollary~\ref{Cor:WeakFundamentalTrop}.
\end{proof}

\section{Tropicalizing points in the o-minimal spectrum}
\label{Sec:Sec4}
\subsection{The o-minimal spectrum}
\label{Sec:OminimalSpectrum}
We recall in this section some fundamental properties of the o-minimal spectrum, following \cite{carralNormalSpectralSpaces1983, pillaySheavesContinuousDefinable1988, millerGrowthDichotomyOminimal1996, Coste1998, tresslRealSpectrumContinuous1997, edmundoSHEAFCOHOMOLOGYOMINIMAL2006, baroSpectralSpacesOminimal2024}. We chose to represent the points in the o-minimal spectrum as ultrafilters instead of complete types, as ultrafilters are more commonly used in real algebraic geometry \cite{brockerRealSpectraDistributions1982,brumfielUltrafilterTheoremReal1989,costeIntroductionOminimalGeometry1999,prestelPositivePolynomialsHilberts2001}.
\begin{definition}
    Let $Z$ be a nonempty set and $\mathcal{D} \subset 2^Z$ be a Boolean algebra of subsets of $Z$, partially ordered by inclusion. A \emph{filter} $\alpha$ on $\mathcal{D}$ is a nonempty collection of subsets in $\mathcal{D}$ satisfying the following properties:
    \begin{enumerate}
        \item $\emptyset \notin \alpha$;
        \item if $A \in \alpha$ and $B \in \mathcal{D}$ with $A\subset B$, then $B \in \alpha$; and 
        \item if $A,B \in \alpha$ then $A \cap B \in \alpha$.
    \end{enumerate}
    We say that a filter $\alpha$ is an \emph{ultrafilter} if, in addition, it satisfies the following property:
    \begin{enumerate}
        \item[(iv)] for each $A\in \mathcal{D}$, if $A \notin \alpha$, then $Z \setminus A \in \alpha$.
    \end{enumerate}
\end{definition}

\begin{definition}
    Let $\fK$ be an o-minimal expansion of a real closed field $\KK$, and let $\Def(X)$ be the Boolean algebra of definable subsets of a definable set $X \subset \KK^n$. The \emph{o-minimal spectrum}, denoted $\widetilde{X},$ is the set of ultrafilters on $\Def(X)$. We equip $\widetilde{X}$ with the \emph{spectral topology}, which has as basis the set of $\widetilde{U} = \{ \, \alpha \in \widetilde{X} \mid U \in \alpha \, \}$ for $U \subset X$ definable open; and with a sheaf $\cO_{\widetilde{X}}$, defined as
        $\cO_{\widetilde{X}}(\widetilde{U}) \coloneqq \cC^{\definable}(U)$,
    where $U \subset X$ is definable open and $\cC^{\definable}(U)$ is the ring of continuous definable functions from $U$ to $\KK$.
\end{definition}
If $f \colon X \to Y$ is a definable map with graph $G_f$, there is a natural extension $\widetilde{f} \colon \widetilde{X} \to \widetilde{Y}$ which has graph $\widetilde{G_f}$.

We recall an alternative, more geometric characterization of the o-minimal spectrum in the polynomially bounded case for locally closed sets, which follows from the \L ojasiewicz inequality. For a definable set $X \subset \KK$ and $f \in \cC^{\definable}(X)$, we denote by $\cZ_X(f) \coloneqq \{ \, a \in X \mid f(a) = 0\, \}$ the zero set of $f$ in $X$.

\begin{theorem}[{\cite[Th.~8.1]{tresslRealSpectrumContinuous1997}; see also \cite[Fact~3.5]{baroSpectralSpacesOminimal2024}}]
\label{Thm:OminimalSpectrum}
    Let $\fK$ be a polynomially bounded o-minimal expansion of a real closed field $\KK$, and let $X \subset \KK^n$ be a locally closed definable set. Then the maps
    \begin{align*}
    \widetilde{X} & \longrightarrow \Spec \cC^{\definable}(X) & \Spec \cC^{\definable}(X) & \longrightarrow \widetilde{X}  \\ \alpha & \longmapsto \p_\alpha \coloneqq \{ \, f \colon \cZ_X(f) \in \alpha \, \}
       &\p & \longmapsto \alpha_\p \coloneqq \{ \, \cZ_X(f) \colon f \in \p \, \} 
\end{align*}
    define an isomorphism between the locally ringed spaces $\widetilde{X}$ and $\Spec \cC^{\definable}(X)$.
\end{theorem}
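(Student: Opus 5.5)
The plan is to reduce everything to the lattice of closed definable subsets of $X$ and to the \L ojasiewicz inequality, which is where polynomial boundedness enters.

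\textbf{Zero sets and closed sets.} Since $X$ is locally closed, every closed definable $C \subset X$ is a zero set. Indeed, $X$ is closed in some definable open $V$, and $f(a) \coloneqq \operatorname{dist}(a, C)$ is continuous, definable, and satisfies $\cZ_X(f) = C$. Conversely, every $\cZ_X(f)$ is closed. Moreover, $\cZ_X(fg) = \cZ_X(f) \cup \cZ_X(g)$ and $\cZ_X(f^2+g^2) = \cZ_X(f) \cap \cZ_X(g)$.

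Every definable subset of $X$ is a finite Boolean combination of closed definable sets, by cell decomposition. Hence an ultrafilter $\alpha$ on $\Def(X)$ is determined by its closed members, and these form a prime filter of closed sets. Conversely, a prime filter $\mathcal{F}$ of closed sets extends uniquely to an ultrafilter, namely the sets containing some $C \cap (X \setminus D)$ with $C \in \mathcal{F}$ and $D$ closed, $D \notin \mathcal{F}$.

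\textbf{The two maps.}

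\emph{The map $\alpha \mapsto \p_\alpha$.} The identities above show that $\p_\alpha$ is a prime ideal. It is closed under addition because $\cZ_X(f+g) \supset \cZ_X(f) \cap \cZ_X(g)$, it absorbs multiplication by arbitrary $h$, and $1 \notin \p_\alpha$.

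\emph{The map $\p \mapsto \alpha_\p$.} The key step, and the main obstacle, is to show that every prime $\p$ is a $z$-ideal: if $f \in \p$ and $\cZ_X(g) \supset \cZ_X(f)$, then $g \in \p$. For this I would invoke the polynomially bounded \L ojasiewicz inequality on the locally closed set $X$, in the global form with a continuous definable weight. It gives some $N$ and a positive $h \in \cC^{\definable}(X)$ with $\abs{g}^N \le h\abs{f}$.

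Then $u \coloneqq g^{2N}/f$ on $X \setminus \cZ_X(f)$, extended by $0$, satisfies $\abs{u} \le h\abs{g}^N$. So $u$ is continuous, $g^{2N} = uf \in \p$, and primeness gives $g \in \p$. The delicate point is making the inequality global on a noncompact locally closed $X$. I would reduce to the closed-in-open case and absorb the behaviour at infinity and near the boundary into $h$ by one-variable growth bounds, using polynomial boundedness.

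With the $z$-ideal property, $\{\cZ_X(f) : f \in \p\}$ is a prime filter of closed sets: it is upward closed by the $z$-property, closed under intersection, and prime via products. So it extends to the ultrafilter $\alpha_\p$. One then checks that $\p_{\alpha_\p} = \p$ (again by the $z$-property) and $\alpha_{\p_\alpha} = \alpha$ (by the first step).

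\textbf{Topology and sheaves.} The basic open $D(f)$ corresponds to $\widetilde{U}$ with $U = X \setminus \cZ_X(f)$. Every definable open $U \subset X$ has this form, since its complement is closed and hence a zero set. So the bijection is a homeomorphism.

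For the sheaves, it suffices to show $\cC^{\definable}(X)_f \cong \cC^{\definable}(U)$. For surjectivity, given $g \in \cC^{\definable}(U)$, the \L ojasiewicz and polynomial-boundedness argument again shows that $gf^M$ extends by $0$ to a continuous definable function on $X$ for large $M$. Then $g = (gf^M)/f^M$. For injectivity, if $a/f^k$ vanishes on $U$, then $af = 0$ on $X$.

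Compatibility with restrictions is clear, and stalks are then local rings, so the two locally ringed spaces are isomorphic.
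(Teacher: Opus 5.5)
Your proposal is correct and is essentially the argument the paper defers to; the paper cites Tressl and notes that the result follows from the \L ojasiewicz inequality. In your version, the weighted global \L ojasiewicz inequality on the locally closed set $X$ makes every prime of $\cC^{\definable}(X)$ a $z$-ideal, which gives the bijection with ultrafilters via prime filters of closed ($=$ zero) sets, and the localizations $\cC^{\definable}(X)_f \cong \cC^{\definable}(X\setminus \cZ_X(f))$ identify the sheaves. The step you only sketch does go through as you describe: embed $X$ as a closed set via $x \mapsto (x, 1/\operatorname{dist}(x,\overline{X}\setminus X))$, damp $f,g$ at infinity by negative powers of $1+\abs{x}^2$, and apply the closed-and-bounded inequality. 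Surjectivity of $\cC^{\definable}(X)_f \to \cC^{\definable}(U)$ also follows directly from the same inequality, applied to the pair $f$ and $f/(1+g^2)$ (extended by $0$).
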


 In the proofs of Theorems~\ref{Thm:A} and~\ref{Thm:B}, we will replace $X$ by its closure without loss of generality, using \Cref{lem:closure}. Therefore, assuming that $X$ is locally closed is not restrictive for our purposes, and it enables us to apply \Cref{Thm:OminimalSpectrum}.
 
 We now discuss the local properties of $\widetilde{X}$, assuming $X$ is locally closed. Denote the stalk at $\alpha \in \widetilde{X}$ as
\[
    \cO_{\widetilde{X},\alpha} = \varinjlim_{\widetilde{U} \ni \alpha} \cO_{\widetilde{X}}(\widetilde{U}) = \varinjlim_{U \in \alpha} {\cC^{\definable}}({U}) \cong \cC^{\definable}(X)_{\p_\alpha}
\]
(where the direct limits are taken for $U \subset {X}$ open) and the residue field as
\[
\KK(\alpha) = \cO_{\widetilde{X},\alpha} / {\p_\alpha}\cO_{\widetilde{X},\alpha} \cong \quot ( \cC^{\definable}(X) / \p_\alpha)
\]
So far, $\KK(\alpha)$ is just a field. We now recall the structure {$\fK(\alpha)$ on $\KK(\alpha)$, called the \emph{definable ultrapower} of $\KK$ at~$\alpha$, which is an elementary extension of $\fK$}.
We provide in the following some details of the construction, and refer to \cite{pillaySheavesContinuousDefinable1988,tresslRealSpectrumContinuous1997, Coste1998, costeIntroductionOminimalGeometry1999} for related discussions. The reader not interested in the model-theoretic details can take \eqref{Def:Kalpha_Formulas} as the definition of the structure.

For $D_1, D_2 \in \alpha $ and $f_i \colon D_i \to \KK$ definable, declare $f_1 \sim f_2$ if and only if $\{ \, a \in D_1 \cap D_2 \mid f_1(a) = f_2(a) \, \} \in \alpha$. One can restrict to the case of continuous $f_1$ and $f_2$ by restricting their domains. The fact that this construction gives exactly the field $\KK(\alpha)$ defined before was shown in \cite[Cor.~3.3]{pillaySheavesContinuousDefinable1988}. Therefore:
\[
    \KK(\alpha) \cong \varinjlim_{D \in \alpha} {\cC}^{\definable}(D) 
\]
Identifying any definable function $f$
with the infinite tuple $(f(a))_{a\in \KK^n}$ (setting $f(a) = 0$ if $a$ is not in the domain of $f$), we can also construct $\KK(\alpha)$ as the underlying set of the boolean ultrapower of $\cL$-structures:
\[
    \fK(\alpha) \coloneqq \prod_{a\in \KK^n} \fK \big/ \alpha
\]
see \cite{mansfieldTheoryBooleanUltrapowers1971} for the precise definition, and also \cite{prestelMathematicalLogicModel2011} for more details on ultrapowers and ultraproducts. \L os's theorm for boolean ultrapowers \cite[Th.~1.5]{mansfieldTheoryBooleanUltrapowers1971} (see also \cite[Th.~2.6.2]{prestelMathematicalLogicModel2011}) implies that, for every formula $\phi$ and every $f(\alpha) \in \KK(\alpha)$:
\begin{align}
\label{Def:Kalpha_Formulas}
    \fK(\alpha) \models \phi(f(\alpha)) \Longleftrightarrow \exists D \in \alpha \text{ s.t. } \fK \models \forall a \in D \ \phi(f(a))
\end{align}
In particular, this implies that the extension $\fK \prec \fK(\alpha)$, where we identify every $a \in \KK$ with a constant function, is elementary \cite[Cor.~1.6]{mansfieldTheoryBooleanUltrapowers1971}.

For all $\alpha \in \widetilde{X}$, $\KK(\alpha)$ is a real closed field, see e.g. \cite{Coste1998, costeIntroductionOminimalGeometry1999}. This is an important step toward the third characterization of the o-minimal spectrum, which we now discuss.

\begin{definition}
    Let $A$ be a commutative ring with a unit. The \emph{real spectrum} of $A$ is the set of all pairs $\alpha = (\mathfrak{p}_\alpha,\leq_\alpha)$, where $\mathfrak{p}_\alpha \in \Spec A$ is a prime ideal and $\leq_\alpha$ is an ordering on the residue field $\quot(A/\p)$. We denote the real spectrum of $A$ by $\sper(A)$.
\end{definition}

We refer the reader to \cite{Scheiderer2024} for the basic properties of the real spectrum. For every $\p \in \Spec \cC^{\definable}(X)$ the residue field $\KK(\alpha_\p) = \quot(\cC^{\definable}(X)/ \p_\alpha)$ is real closed, and in particular it admits a unique ordering. Hence 
\[
    \sper \cC^{\definable}(X) \longrightarrow \Spec \cC^{\definable}(X), \quad (\p, \le) \longmapsto \p
\]
is a bijection. Moreover, it is proven in \cite[Lem.~2.4 and Th.~8.1]{tresslRealSpectrumContinuous1997} (see also \cite[Fact~3.1]{baroSpectralSpacesOminimal2024}) that, in the polynomially bouded case, this map is also a homeomorphism when we equip the real spectrum with the Harrison topology (see e.g. \cite{Scheiderer2024} for the definition of the Harrison topology).

We have then three different incarnations of $\widetilde{X}$ in the polynomially bounded case, for locally closed definable $X$:
\begin{enumerate}
    \item as the set of ultrafilters on $\Def(X)$, by definition;
    \item as the spectrum of the ring $\cC^{\definable}(X)$;
    \item as the real spectrum of the ring $\cC^{\definable}(X)$.
\end{enumerate}
When $X$ is not locally closed, we only have an embedding $\widetilde{X} \hookrightarrow \Spec \cC^{\definable}(X) \cong \Sper \cC^{\definable}(X)$, see \cite[Prop.~3.1]{tresslRealSpectrumContinuous1997} or \cite[Fact~3.4]{baroSpectralSpacesOminimal2024}.
As a first application of this identification, we describe the closed points of $\widetilde{X}$.

\begin{definition}
    Let $X \subset \KK^n$ be a definable set in an o-minimal expansion of a real closed field $\KK$. We denote by $\widetilde{X}^{\max}$ the set of closed points in $\widetilde{X}$ with respect to the spectral topology.
\end{definition}

\begin{definition}
    Let $(\KK,\leq)$ be an ordered field and $A \subset \KK$ be a subring. We say that $\KK$ is \emph{relatively archimedean over $A$} with respect to $\leq$ if, for every $b\in \KK$, there exists $a\in A$ with $\abs{b} \leq a$. We say that an ordered field is \emph{archimedean} if it is relatively archimedean over $\QQ$, or equivalently if it is a subfield of $\RR$ (by H\"older's embedding theorem \cite[Th.~2.1.10]{Knebusch2022}).
\end{definition}

\begin{lemma}
    \label{lem:max}
    Let $\fK$ be a polynomially bounded o-minimal expansion of a real closed field $\KK$, and let $X \subset \KK^n$ be definable locally closed set. Then the following are equivalent.
    \begin{enumerate}
        \item $\alpha \in \widetilde{X}^{\max}$;
        \item $\KK(\alpha)$ is relatively archimedean over $\cC^{\definable}(X) / \p_\alpha$.
    \end{enumerate}
\end{lemma}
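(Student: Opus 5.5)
Via \Cref{Thm:OminimalSpectrum}, I identify $\widetilde{X}$ with $\Spec A$, $A \coloneqq \cC^{\definable}(X)$, so $\alpha\in\widetilde X^{\max}$ iff $\p_\alpha$ is a maximal ideal of $A$. The proof becomes a statement about the ordered domain $A/\p_\alpha$ and its real closed fraction field $\KK(\alpha)=\quot(A/\p_\alpha)$. Condition (ii) then means that for every $g\in A$ with $g\notin\p_\alpha$, the element $1/\bar g\in\KK(\alpha)$ is bounded by some $\bar h$, $h\in A$. (A general element $\bar f/\bar g$ is bounded by $|\bar f|\cdot|1/\bar g|$, and $|f|\in A$.)

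For (i)$\Rightarrow$(ii): if $\p_\alpha$ is maximal, then $A/\p_\alpha$ is already a field, so $\KK(\alpha)=A/\p_\alpha$ and (ii) is trivial.

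For (ii)$\Rightarrow$(i), let $g\in A\setminus\p_\alpha$. I want to show $g$ becomes invertible modulo $\p_\alpha$, which forces $\p_\alpha$ to be maximal. By (ii) there is $h\in A$, which I may take with $h\ge1$ (replace it by $|h|+1$), such that $|1/\bar g|\le \bar h$ in $\KK(\alpha)$. Equivalently $\bar h|\bar g|\ge 1$, that is, $\overline{h|g|-1}\ge 0$.

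Next I translate this inequality back to $X$ using the ordering of $\KK(\alpha)$. Via \eqref{Def:Kalpha_Formulas} and the identification of $\KK(\alpha)$ with germs along the ultrafilter, the inequality says that
\[
D\coloneqq\{a\in X \mid h(a)|g(a)|\ge 1/2\}\in\alpha,
\]
after weakening $1$ to $1/2$ to absorb equality issues. The set $D$ is closed in $X$. On $D$, $g$ has no zeros, and $|g|\ge 1/(2h)$.

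I then build a continuous definable $\tilde g$ on $X$ that agrees with $g$ on $D$ and never vanishes. Take
\[
\tilde g \coloneqq \sign(g)\max\bigl(|g|,\,1/(2h)\bigr),
\]
which is continuous because $g$ is continuous and the max is bounded away from $0$. Note that $1/(2h)\in A$ since $h\ge1$. Then $\tilde g$ is a unit in $A$, and $g-\tilde g$ vanishes on $D$. So $\cZ_X(g-\tilde g)\supseteq D\in\alpha$, hence $g-\tilde g\in\p_\alpha$ and $\bar g=\bar{\tilde g}$ is a unit in $A/\p_\alpha$. Therefore $A/\p_\alpha$ is a field and $\p_\alpha$ is maximal.

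The delicate step is the passage from $\overline{h|g|-1}\ge0$ in $\quot(A/\p_\alpha)$ to a set in $\alpha$. This requires that the ordering on $\KK(\alpha)$ coming from the definable ultrapower agrees with the unique real closed ordering on $\quot(A/\p_\alpha)$. Here I use \cite[Cor.~3.3]{pillaySheavesContinuousDefinable1988} and the identification $\Sper A\cong\Spec A$ recalled above.

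A second subtlety arises if $\sign(g)$ fails to be continuous. To avoid this, I replace $g$ by $g^2$ from the outset: $\bar g$ is a unit iff $\bar g^2$ is, and $g^2\ge0$ lets me use $\tilde g=\max(g^2,1/(4h^2))$ directly.
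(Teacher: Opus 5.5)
Your argument is correct, but it takes a different route from the paper. The paper's proof is a one-line citation. It transports the question along the homeomorphism $\widetilde{X}\cong\Sper\cC^{\definable}(X)$ and invokes the general real-algebra fact \cite[Prop.~3.6.17]{Scheiderer2024}: a point of a real spectrum is closed if and only if its residue field is archimedean over the image of the ring.

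You proceed differently:
\begin{itemize}
\item You use $\widetilde{X}\cong\Spec A$ to turn (i) into maximality of $\p_\alpha$.
\item You note that (i)$\Rightarrow$(ii) is trivial, because $A/\p_\alpha$ is then already a field.
\item You prove (ii)$\Rightarrow$(i) by hand. A bound of the form $\bar H\bar g^{2}\ge 1$ with $H\ge 1$ gives, via \eqref{Def:Kalpha_Formulas}, a set $D\in\alpha$ on which $g^2\ge 1/H$. The truncation $\max(g^2,1/(2H))$ is a unit of $A$ that agrees with $g^2$ on $D$, so $\bar g$ is invertible modulo $\p_\alpha$.
\end{itemize}

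What your route buys is self-containedness. It needs only the $\Spec$ identification, the \L o\'s property of the definable ultrapower, and the lattice operations available in $\cC^{\definable}(X)$. It also shows concretely why the general real-spectrum criterion reduces here to maximality of $\p_\alpha$, even though in general closed points of a real spectrum need not have maximal support (e.g.\ $+\infty\in\Sper\RR[x]$ is closed with support $(0)$). The paper's route is shorter, but it rests on the general specialization theory of real spectra together with Tressl's $\Sper$--$\Spec$ homeomorphism.

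The ordering compatibility you flag holds for a simpler reason than the one you cite. $\KK(\alpha)$ is real closed, so it has a unique ordering, and the field isomorphism with the definable ultrapower \cite[Cor.~3.3]{pillaySheavesContinuousDefinable1988} automatically respects it.

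One correction. Your first candidate unit, $\sign(g)\max(|g|,1/(2h))$, is not continuous whenever $g$ has a zero on $X$. It jumps across the zero set, and since $\sign(0)=0$ it even vanishes there. So your sentence asserting its continuity is false. The $g^2$ version is therefore not an optional refinement: it is the argument you should actually present. Also, weakening $1$ to $1/2$ is unnecessary, since $\{h|g|\ge 1\}\in\alpha$ already follows by upward closure of $\alpha$.
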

\begin{proof}
    This follows from \cite[Prop.~3.6.17]{Scheiderer2024} combined with the homoemorphism $\widetilde{X} \cong \sper \cC^{\definable}(X)$.
\end{proof}
In the semialgebraic case, the o-minimal spectrum of a semialgebraic set coincides with the standard tilda operation, see \cite{brockerRealSpectraDistributions1982}. The homomorphism between the real spectrum costruction and the spectrum of the ring of continuous semialgebraic functions has been been first proven in \cite[Prop.~6]{carralNormalSpectralSpaces1983}.
\subsection{Relatively archimedean points}
\label{Sec:RelArchPoints}
We study relatively archimedean points in the o-minimal setting. This concept has been introduced in \cite{jellRealTropicalizationAnalytification2020} to study tropicalizations of semialgebraic sets. In the following, we fix $\fK$ a polynomially bounded o-minimal expansion of a real closed field $\KK$ and a definable set $X \subset \KK^n$.
\begin{definition}
    We say that $\alpha \in \widetilde{X}$ is \emph{relatively archimedean} if $\KK(\alpha)$ is relatively archimedean over $\KK$. We denote the set of relatively archimedean points as $\widetilde{X}^{\arch}$, and equip it with the relative spectral topology.
\end{definition}
\begin{remark}
    \label{rem:berkovic}
    It is shown in \cite[Theorem 3.17]{jellRealTropicalizationAnalytification2020} that $\widetilde{X}^{\arch}$ is homeomorphic, in the semialgebraic case, to the real Berkovich analytification of $X$. While an analog statement holds true also in our generalized setup, we prefer in the following to work directly inside the o-minimal spectrum, since our model-theoretic techniques are better expressed in this context.
\end{remark}
We use \eqref{Def:Kalpha_Formulas} to characterize relatively archimedean points in terms of continuous semialgebraic functions.
\begin{lemma}
    \label{lem:arch_char}
    Let $X$ be a definable locally closed set. Then $\alpha \in \widetilde{X}^\arch$ if and only if for every $D \in \alpha$ and $f \in \cC^{\definable}(X)$, there exists $D' \in \alpha$ and $c \in \KK$ such that $\abs{f(a)} \le c$ for all $a \in D\cap D'$.
\end{lemma}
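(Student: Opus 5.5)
We identify elements of $\KK(\alpha)$ with germs of definable functions, via the isomorphism $\KK(\alpha) \cong \varinjlim_{D \in \alpha} \cC^{\definable}(D)$ recalled above. The definition of $\alpha \in \widetilde{X}^\arch$ (every element of $\KK(\alpha)$ is bounded in absolute value by an element of $\KK$) then becomes a statement about formulas in $\fK(\alpha)$. We translate it into a statement about $\fK$ using the \L o\'s-type equivalence \eqref{Def:Kalpha_Formulas}. The condition in the lemma, with an arbitrary $D \in \alpha$ and $f \in \cC^{\definable}(X)$, is essentially this translation applied to the class of $f|_D$.

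For ($\Rightarrow$), assume $\alpha$ is relatively archimedean. Let $D \in \alpha$ and $f \in \cC^{\definable}(X)$, and let $f(\alpha) \in \KK(\alpha)$ denote the class of $f$ (equivalently of $f|_D$, since $D \in \alpha$). By assumption there is $c \in \KK$ with $\abs{f(\alpha)} \le c$ in $\KK(\alpha)$. This is the formula $\phi(y) \equiv \abs{y} \le c$, which has parameter $c \in \KK$. Here we use that $\KK \subset \KK(\alpha)$ via constant functions, so that $c$ is interpreted as the class of the constant function $c$. By \eqref{Def:Kalpha_Formulas} there is $D' \in \alpha$ with $\abs{f(a)} \le c$ for all $a \in D'$, and hence for all $a \in D \cap D'$.

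For ($\Leftarrow$), let $b \in \KK(\alpha)$ be arbitrary. By the ultrapower description, $b$ is the class of some definable $g \colon D_0 \to \KK$ with $D_0 \in \alpha$, and by restricting the domain we may take $g$ continuous on $D_0$. The main obstacle is that the hypothesis only concerns $f \in \cC^{\definable}(X)$, i.e.\ functions continuous on all of $X$, while $g$ is only continuous on $D_0$. I would handle this in one of two ways.
\begin{itemize}
\item First option: use \Cref{Thm:OminimalSpectrum} (which is why $X$ is assumed locally closed). It gives $\KK(\alpha) \cong \quot(\cC^{\definable}(X)/\p_\alpha)$, so $b = f_1(\alpha)/f_2(\alpha)$ with $f_1, f_2 \in \cC^{\definable}(X)$ and $f_2 \notin \p_\alpha$. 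This reduces matters to bounding $\abs{f_1}$ above and $\abs{f_2}$ below on a set in $\alpha$.
\item Simpler route: bound $b$ directly by elements of $\cC^{\definable}(X)$. Choose $D_1 \in \alpha$ that is closed in $X$ (possible because, for locally closed $X$, o-minimal cell decomposition lets us shrink $D_0$ to a finite union of cells; one of them lies in $\alpha$, and we can pass to a definable subset closed in $X$). Then use definable Tietze extension or a definable bound of the form $\abs{g} \le h|_{D_1}$ with $h \in \cC^{\definable}(X)$.
\end{itemize}
I would try the Tietze route first: shrink $D_0$ to $D_1 \in \alpha$ closed in $X$ on which $g$ is continuous, then extend $g|_{D_1}$ to $f \in \cC^{\definable}(X)$ by the o-minimal Tietze extension theorem. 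If finding a closed member of $\alpha$ is problematic, I would instead pass to the fraction representation from \Cref{Thm:OminimalSpectrum}.

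Once $f \in \cC^{\definable}(X)$ agrees with $g$ on $D_1 \in \alpha$, we apply the hypothesis with $D = D_1$. This gives $D' \in \alpha$ and $c \in \KK$ with $\abs{g} = \abs{f} \le c$ on $D_1 \cap D' \in \alpha$. By \eqref{Def:Kalpha_Formulas}, $\abs{b} \le c$ in $\KK(\alpha)$. Since $b$ was arbitrary, $\KK(\alpha)$ is relatively archimedean over $\KK$, i.e.\ $\alpha \in \widetilde{X}^\arch$.
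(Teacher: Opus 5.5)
Your ($\Rightarrow$) direction is correct. The ($\Leftarrow$) direction, however, cannot be completed by either of your routes: with $f$ ranging only over $\cC^{\definable}(X)$, the implication is false.

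Take $X=\KK_{>0}$, $p\in\KK_{>0}$ and $\alpha=p^+$ as in \Cref{ex:ominimal_spectrum}. Every $f\in\cC^{\definable}(X)$ is bounded by some $c\in\KK$ on the closed bounded set $[p,p+1]$, hence on $]p,p+1]\in\alpha$, so the condition holds. But $\cZ_X(x-p)=\{p\}\notin\alpha$, so $x-p$ is a nonzero positive infinitesimal in $\KK(\alpha)$. Hence $1/(x-p)\in\KK(\alpha)$ exceeds every element of $\KK$, i.e.\ $\alpha\notin\widetilde{X}^{\arch}$.

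This pinpoints where your routes break. In the Tietze route, the step ``shrink $D_0$ to $D_1\in\alpha$ closed in $X$ on which $g$ is continuous'' fails. Every member of $p^+$ has $p$ in its closure, so a member closed in $X$ contains $p$, where $g=1/(x-p)$ is not even defined. In the fraction route, the needed lower bound $\abs{f_2}\ge\epsilon>0$ on a set in $\alpha$ does not exist. If $1/(x-p)=f_1(\alpha)/f_2(\alpha)$ with $f_1(\alpha)$ bounded, then $f_2(\alpha)=f_1(\alpha)\,(x-p)(\alpha)$ is infinitesimal. The hypothesis cannot supply such a bound, since $1/f_2\notin\cC^{\definable}(X)$.

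By \eqref{Def:Kalpha_Formulas}, the condition with global $f$ only says that the image of $\cC^{\definable}(X)$ in $\KK(\alpha)$ is bounded by $\KK$. Via \Cref{lem:max}, this characterizes $\widetilde{X}^{\arch}$ inside $\widetilde{X}^{\max}$, but not inside $\widetilde{X}$.

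The lemma has to be read with $f$ ranging over $\cC^{\definable}(D)$ for the given $D\in\alpha$. This is how the paper's proof reads it, and how \Cref{lem:archimedean_functor} later uses it. With $f\in\cC^{\definable}(X)$ the quantifier over $D$ would be redundant.

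With that reading, the ($\Leftarrow$) direction is a one-liner needing neither Tietze extension nor \Cref{Thm:OminimalSpectrum}:
\begin{itemize}
\item write an arbitrary $b\in\KK(\alpha)$ as $g(\alpha)$ with $g\in\cC^{\definable}(D_0)$ and $D_0\in\alpha$;
\item apply the hypothesis with $D=D_0$ and $f=g$;
\item conclude $\abs{b}\le c$ from \eqref{Def:Kalpha_Formulas}.
\end{itemize}
Your ($\Rightarrow$) argument works verbatim for this reading, and together these give exactly the paper's proof.
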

\begin{proof}
    Every element of $\KK(\alpha)$ can be represented as $f(\alpha)$ for some $f \in \cC^{\definable}(D)$ and $D \in \alpha$. Then the result follows from \eqref{Def:Kalpha_Formulas}.
\end{proof}
\begin{example}
    \label{ex:ominimal_spectrum}
    For any polynomially bounded o-minimal structure $\fK$ expanding a real closed field $\KK$, we describe the o-minimal spectrum $\widetilde{\KK_{>0}}$. Since the definable subsets of $\KK$ are finite unions of points and intervals, these are precisely the semialgebraic subsets of $\KK$. Thus, combining \cite[Ex.~3.1.14]{Scheiderer2024} and \cite[Prop.~6]{carralNormalSpectralSpaces1983}, we have the following characterization of $\widetilde{\KK_{>0}}$. 
    A \emph{Dedekind cut} of $\KK_{>0}$ is a pair $(I, J)$ such that $\KK_{>0} = I \cup J$ and $I<J$. We say that a Dedekind cut is \emph{free} is it is not of the form $]0, a], ]a, \infty[$ or $]0, a[, [a, \infty[$ for some $a \in \KK_{>0}$. Then:
    \begin{itemize}
        \item $\widetilde{\KK_{>0}}^\arch$ contains exactly the set of \emph{principal} ultrafilters (i.e. the ultrafilters defined as the collection of definable subsets of $\KK_{>0}$ containing a given $a\in \KK_{>0}$) and one ultrafilter for each free Dedekind cut of $\KK_{>0}$.
        \item $\widetilde{\KK_{>0}}^{\max}$ is equal to the union of $\widetilde{X}^\arch$ with $0^+$ and $+\infty$, where $0^+$ (resp. $+\infty$) is the ultrafilter of definable subsets containing $]0,\epsilon]$ for some $\epsilon \in \KK_{>0}$ (resp. $[M, +\infty[$ for some $M \in \KK_{>0}$).
        \item $\widetilde{\KK_{>0}}$ is equal to the union of $\widetilde{X}^{\max}$ with $a^+$ and $a^-$ for $a \in \KK_{>0}$, where $a^+$ (resp. $a^-$) is the ultrafilter of definable sets containing $]a,a+\epsilon]$ (resp. $[a-\epsilon, a[$) for some $\epsilon \in \KK_{>0}$.
    \end{itemize}
    We denote $\cH(\fK) \coloneqq \KK(+\infty)$. This is the Hardy field of germs of continuous definable functions at infinity (isomorphic to the algebraic Puiseux series in the semialgebraic case, see \cite{BasuPollackRoy}), and the structure $\fH(\fK) \coloneqq \fK(+\infty)$ is an elementary extension of $\fK$. The field $\cH(\fK)$ admits a nontrivial valuation, which is zero on $\KK$, and whose value group is the field of exponents $\Lambda(\fK)$, and the splitting is given by the germ of the identity function. We refer to \cite{millerGrowthDichotomyOminimal1996} for more details.
\end{example}

Notice that the inclusion $\widetilde{X}^\arch \subset \widetilde{X}^{\max}$ follows from \Cref{lem:max}. We will show in \Cref{lem:closed_bounded} that equality holds if and only if $X$ is closed and bounded. To do this, we need first to show that every definable map $f \colon X \to Y$ descends to a map of relatively archimedean points.
\begin{lemma}
    \label{lem:archimedean_functor}
    Let $f \colon X \to Y$ be a definable map between locally closed definable sets, and $\alpha \in \widetilde{X}$. If $\alpha \in \widetilde{X}^\arch$, then $\widetilde{f}(\alpha) \in \widetilde{Y}^\arch$. Moreover, if $f$ is injective and $\widetilde{f}(\alpha) \in \widetilde{Y}^\arch$, then $\alpha \in \widetilde{X}^{\arch}$.
\end{lemma}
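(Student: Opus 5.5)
The plan is to work with the description of residue fields as definable ultrapowers, together with the characterization of relatively archimedean points in \Cref{lem:arch_char}. The extension $\widetilde{f}$ sends $\alpha$ to the ultrafilter $\beta = \widetilde{f}(\alpha) = \{ B \in \Def(Y) \mid f^{-1}(B) \in \alpha\}$. There is a natural homomorphism of residue fields $\KK(\beta) \to \KK(\alpha)$, $g(\beta) \mapsto (g\circ f)(\alpha)$, for $g$ definable on some $B \in \beta$. It is well defined: if $g_1 = g_2$ on a set in $\beta$, then $g_1\circ f = g_2 \circ f$ on its preimage, which lies in $\alpha$. It is a field homomorphism, hence injective, and it fixes $\KK$ (constant functions). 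By \eqref{Def:Kalpha_Formulas} it is also order preserving, since $g(\beta)\ge 0$ means $g\ge0$ on some $B\in\beta$, so $g\circ f\ge 0$ on $f^{-1}(B)\in\alpha$.

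For the first claim, let $g(\beta) \in \KK(\beta)$ be arbitrary. Since $\alpha$ is relatively archimedean, there is $c \in \KK$ with $\abs{(g\circ f)(\alpha)} \le c$. By \eqref{Def:Kalpha_Formulas}, $\abs{g(f(a))}\le c$ for all $a$ in some $D\in\alpha$. Then $B' \coloneqq \{ b \in B \mid \abs{g(b)} \le c\}$ is definable and contains $f(D\cap f^{-1}(B))$, so $f^{-1}(B') \in \alpha$ and $B'\in\beta$. Hence $\abs{g(\beta)}\le c$, and $\KK(\beta)$ is relatively archimedean over $\KK$. Equivalently, the order embedding $\KK(\beta)\hookrightarrow\KK(\alpha)$ over $\KK$ transfers relative archimedeanity downward.

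For the converse with $f$ injective, I will show that the embedding $\KK(\beta)\to\KK(\alpha)$ is surjective, so it is an isomorphism of ordered fields over $\KK$. Take $h(\alpha)$ with $h$ definable on $D\in\alpha$. Because $f$ is injective, $f|_D\colon D\to f(D)$ is a definable bijection with definable inverse $f^{-1}\colon f(D)\to D$. Set $g \coloneqq h\circ f^{-1}$ on $f(D)$. Since $f^{-1}(f(D)) = D \in\alpha$, we have $f(D)\in\beta$, and $g\circ f = h$ on $D$, so $h(\alpha)$ is the image of $g(\beta)$. Thus $\KK(\alpha)\cong\KK(\beta)$ over $\KK$, and relative archimedeanity of $\beta$ gives that of $\alpha$. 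Alternatively, one can argue directly: given $h$ on $D$, bound $g$ by $c$ on some $B'\in\beta$, and then $\abs{h}\le c$ on $D\cap f^{-1}(B')\in\alpha$.

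The main point needing care is the identification of elements of $\KK(\alpha)$ with germs of definable functions on sets of $\alpha$ (not only continuous functions on all of $X$), i.e. the direct-limit description after \cite[Cor.~3.3]{pillaySheavesContinuousDefinable1988}. This is where local closedness is used, but $f$ itself need not be continuous. If one insists on continuous representatives, restrict $g$ and $h$ to a definable open dense subset where they are continuous; by cell decomposition such a restriction can be taken in the ultrafilter after passing to a cell in $\alpha$. The remaining steps are routine applications of \eqref{Def:Kalpha_Formulas}.
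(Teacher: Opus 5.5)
Your proposal is correct and is essentially the paper's argument: the paper works directly with \Cref{lem:arch_char}, bounding $h\circ f$ by some $c\in\KK$ on a set $D\in\alpha$ and transferring the bound to $f(D)\cap E\in\widetilde{f}(\alpha)$; for the converse it bounds $g\circ f^{-1}$ on some $E\in\widetilde{f}(\alpha)$ and pulls the bound back to $f^{-1}(E)\cap D\in\alpha$, which is exactly the direct argument you give as an alternative. Your packaging through the order embedding $\KK(\widetilde{f}(\alpha))\hookrightarrow\KK(\alpha)$ over $\KK$, which is surjective when $f$ is injective, is a structural rephrasing of the same computation, and it also yields the slightly stronger conclusion $\KK(\widetilde{f}(\alpha))\cong\KK(\alpha)$ for injective $f$.
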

\begin{proof}
    Notice that $\widetilde{f}(\alpha) = \{ \, D' \in \Def(Y) \mid f^{-1}(D') \in \alpha \, \}$. Let $\alpha \in \widetilde{X}^\arch$. If $h \in \cC^{\definable}(E)$ for some $E \in \widetilde{f}(\alpha)$, then there exists $D \in \alpha$ and $c \in \KK$ such that $\abs{h \circ f} \le c$ on $D$. By definition, $f(D) \in \widetilde{f}(\alpha)$ and $\abs{h} \le c$ on $f(D)\cap E$. Hence $\tilde{f}(\alpha) \in \widetilde{Y}^\arch$ by \Cref{lem:arch_char}.

    Assume that $f$ is injective and $\widetilde{f}(\alpha) \in \widetilde{Y}^\arch$. Let $g \in \cC^{\definable}(D)$ for some $D \in \alpha$. The function $f^{-1}$ is definable, hence there exists $E \in \widetilde{f}(\alpha)$ and $c \in \KK$ such that $\abs{g \circ f^{-1}} \le c$ on $E$. Then $\abs{g} \le c$ on $f^{-1}(E) \cap D \in \alpha$, that implies $\alpha \in \widetilde{X}^\arch$ by \Cref{lem:arch_char}.
\end{proof}
\Cref{lem:archimedean_functor} shows that the diagram
\begin{center}
    \begin{tikzcd}[sep=large]
     X \arrow[r, "f"] \arrow[d, hook]
    & Y \arrow[d, hook]\\
     \widetilde{X}^{\arch} \arrow[r, "\widetilde{f}"] & \widetilde{Y}^{\arch}
    \end{tikzcd}
\end{center}
is well-defined and commutes.    
\begin{lemma}
    \label{lem:closed_bounded}
    The equality $\widetilde{X}^\arch = \widetilde{X}^{\max}$ holds if and only if the locally closed definable set $X$ is closed and bounded.
\end{lemma}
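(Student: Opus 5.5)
We prove the two implications separately, using throughout the inclusion $\widetilde{X}^\arch \subset \widetilde{X}^{\max}$ (from \Cref{lem:max}) and the characterization of relatively archimedean points in \Cref{lem:arch_char}.

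\emph{Closed and bounded implies equality.} Assume $X$ is closed and bounded, and let $\alpha \in \widetilde{X}^{\max}$. By \Cref{lem:max}, $\KK(\alpha)$ is relatively archimedean over $\cC^{\definable}(X)/\p_\alpha$. Every $f \in \cC^{\definable}(X)$ is bounded on $X$ by some $c \in \KK$, since a continuous definable function on a closed and bounded definable set is bounded (o-minimal extreme value theorem). Hence the class of $f$ in $\cC^{\definable}(X)/\p_\alpha$ is bounded by $c$ in the ordering of $\KK(\alpha)$, by \eqref{Def:Kalpha_Formulas} with $D = X$. Transitivity then shows that every element of $\KK(\alpha)$ is bounded by some element of $\KK$, so $\alpha \in \widetilde{X}^\arch$.

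\emph{Equality implies closed and bounded.} We argue by contraposition, exhibiting a closed point that is not relatively archimedean.

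Suppose first that $X$ is unbounded. Take the definable function $N(a) = a_1^2 + \dots + a_n^2$, which maps $X$ onto an unbounded definable subset of $\KK_{\geq 0}$. The ultrafilter $+\infty$ on that image lifts to an ultrafilter $\alpha$ on $\Def(X)$ containing every $\{N \geq M\}$, for instance by extending the filter generated by these sets. By \Cref{lem:arch_char}, the function $N$ shows that $\alpha$ is not relatively archimedean. By quasi-compactness of the spectral space $\widetilde{X}$, the closure of $\{\alpha\}$ contains a closed point $\beta$. This $\beta$ is a specialization of $\alpha$, so it still contains every closed set $\{N \ge M\}$ (specialization preserves closed definable sets in $\alpha$). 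Thus $\beta \in \widetilde{X}^{\max} \setminus \widetilde{X}^\arch$.

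Suppose instead that $X$ is bounded but not closed, and pick $p \in \cl{X} \setminus X$. Since $X$ is locally closed, $p \notin X$ while $p$ is a limit of $X$. Use $g(a) = 1/\|a-p\|^2$, which lies in $\cC^{\definable}(X)$ and is unbounded near $p$. Run the same construction with the closed sets $\{g \geq M\}$: the resulting closed point contains all of them, hence is not relatively archimedean.

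The main obstacle is justifying that the closed point $\beta$ in the closure of $\alpha$ still contains the sets $\{N \ge M\}$ (respectively $\{g\ge M\}$). I will argue this via \Cref{Thm:OminimalSpectrum}. Specialization corresponds to inclusion of primes, $\p_\alpha \subset \p_\beta$. Each set $\{N \geq M\}$ is the zero set of the continuous definable function $\max(M-N,0)$, so membership of $\{N \ge M\}$ in $\alpha$ means this function lies in $\p_\alpha$, hence in $\p_\beta$, hence $\{N \ge M\} \in \beta$.
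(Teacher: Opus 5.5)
Your proof is correct. The direction ``closed and bounded $\Rightarrow$ equality'' is the same as in the paper: continuous definable functions on $X$ are globally bounded, then \Cref{lem:max} and transitivity of relative archimedeanity give the claim.

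For the converse you take a genuinely different route. The paper applies curve selection to get a definable curve $g\colon\, ]0,1]\to X$ escaping to a point of $\cl{X}\setminus X$ or to infinity, and takes the ultrafilter of germs along $g$. It then shows this ultrafilter is closed and not relatively archimedean by transporting it, through the homeomorphism $\widetilde{]0,\epsilon]}\cong\widetilde{Y_\epsilon}$ obtained from definable triviality, to the point $0^+$. Finally it uses that $Y_\epsilon$ is relatively closed in $X$, together with \Cref{lem:archimedean_functor}.

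You instead start from an arbitrary ultrafilter containing the relatively closed superlevel sets of one globally continuous unbounded function ($N$, resp.\ $1/\|a-p\|^2$), and then pass to any closed specialization. Your key claim is that relatively closed definable sets in $\alpha$ remain in every specialization. This is right, via $\p_\alpha\subset\p_\beta$ as you argue, or even directly from the definition of the spectral topology by taking complements of basic opens $\widetilde U$.

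Your version avoids curve selection, definable triviality and the explicit description of $\widetilde{]0,\epsilon]}$. It also makes clear why the witnessing function must be continuous on all of $X$. The point $a^+$ of \Cref{ex:ominimal_spectrum} is not relatively archimedean, yet it specializes to the archimedean closed point $a$. This happens because every globally continuous function is bounded at $a^+$; its unboundedness is witnessed only by functions such as $1/(x-a)$.

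The paper's proof, in exchange, exhibits an explicit closed non-archimedean point with a clear geometric meaning, the limit along a curve, in line with the semialgebraic argument of \cite{jellRealTropicalizationAnalytification2020}.
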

\begin{proof}
    The inclusion $\widetilde{X}^\arch \subset \widetilde{X}^{\max}$ always holds by \Cref{lem:max}.

    Assume that $X$ is closed and bounded. Then for every $f \in \cC^{\definable}(X)$, there exists $c \in \KK$ such that $\abs{f} \le c$ on $X$, see e.g. \cite[Th.~3.4]{costeIntroductionOminimalGeometry1999}. This means that $\cC^{\definable}(X)$ is relatively archimedean over $\KK$. Now let $\alpha \in \widetilde{X}^{\max}$. From \Cref{lem:max}, $\KK(\alpha)$ is relatively archimedean over $\cC^{\definable}(X)/\p_\alpha$, which is relatively archimedean over $\KK$ since $\cC^{\definable}(X)$ is so. Hence $\alpha \in \widetilde{X}^\arch$.
    
    Assume now that $X$ is not closed or not bounded. Then, from the semialgebraic curve selection lemma, see e.g. \cite[Th.~3.2]{costeIntroductionOminimalGeometry1999}, there exists a definable curve  $g \colon ]0,1] \to X$ such that $g(\epsilon)$ converges to $a \in \cl{X} \setminus X$ or $\lim \norm{g(\epsilon)} = \infty$ for $\varepsilon \to 0$. In both cases, the set $Y_\epsilon \coloneqq g(]0,\epsilon])$ is relatively closed in $X$ for all $\epsilon$. Let 
    \[
        \alpha = \{ \, D \in \Def(X) \mid Y_\epsilon {\subset} D \text{ for all } \epsilon \text{ small enough} \, \}
    \]
    be the ultrafilter associated to $g$. It is well-known that $\alpha \in \widetilde{X}^{\max} \setminus \widetilde{X}^\arch$, which concludes the proof. This fact is also used in the proof of \cite[Prop.~4.2]{jellRealTropicalizationAnalytification2020} in the semialgebraic setting. However, since we could not find a proof of this fact in the literature, we give a full argument in the next paragraph.
    
    Since $Y_\epsilon$ is {relatively} closed in $X$,  $\alpha$ is a closed point of $\widetilde{X}$ if and only if $\beta \coloneqq \alpha \cap Y_\epsilon$ is a closed point of~$\widetilde{Y_\epsilon}$. By the definable triviality for functions (see e.g. \cite[Th.~5.24]{costeIntroductionOminimalGeometry1999}), for $\epsilon$ sufficiently small the map $g$ is a definable homeomorphism between $Y_\epsilon$ and $]0, \epsilon]$. Hence $\widetilde{g} \colon \widetilde{]0, \epsilon]} \to \widetilde{Y_\epsilon}$ is a homeomorphism, and $\widetilde{g}^{-1}(\beta) = 0^+ \in \widetilde{]0, \epsilon]}^{\max} \setminus \widetilde{]0, \epsilon]}^\arch$, see \Cref{ex:ominimal_spectrum} for the notation. This implies that $\beta \in \widetilde{Y_\epsilon}^{\max} \setminus \widetilde{Y_\epsilon}^{\arch}$. From \Cref{lem:archimedean_functor}, applied to the inclusion $Y_\epsilon \hookrightarrow X$, we conclude that $\alpha \in \widetilde{X}^{\max} \setminus \widetilde{X}^\arch$. 
\end{proof}

We conclude this subsection with a result (\Cref{lem:surjectivity_field_extension}) about the behavior of relatively archimedean points under elementary extensions. This result will play a crucial role in the proof of \Cref{thm:fundamental_theorem}. To that end, we first need the following lemma.
\begin{lemma}
    \label{lem:archimedean_points_1}
    Let $\fK$ be  a polynomially bounded o-minimal structure expanding a real closed field $\KK$. Then 
    \[\widetilde{\KK^n}^\arch = \bigcup_{s \in \KK_{>0}} \widetilde{]-s, s[^n} \cap \widetilde{\KK^n}^{\max} = \bigcup_{s \in \KK_{>0}} \widetilde{[-s, s]^n}^{\max} = \bigcup_{s \in \KK_{>0}} \widetilde{[-s, s]^n}^{\arch}\]
    In particular, $\widetilde{\KK^n}^\arch$ is relatively spectral open in $\widetilde{\KK^n}^{\max}$. Moreover,
    \[\widetilde{\KK_{>0}^n}^\arch = \bigcup_{s \in \KK_{>0}} \widetilde{]s^{-1}, s[^n} \cap \widetilde{\KK^n}^{\max} = \bigcup_{s \in \KK_{>0}} \widetilde{[s^{-1}, s]^n}^{\max} = \bigcup_{s \in \KK_{>0}} \widetilde{[s^{-1}, s]^n}^{\arch}\]
    and  $\widetilde{\KK^n_{>0}}^\arch$ is relatively spectral open in $\widetilde{\KK^n_{>0}}^{\max}$.
\end{lemma}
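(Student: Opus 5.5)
We prove the chain of equalities for $\KK^n$ first and then deduce the statement for $\KK^n_{>0}$ in the same way. Throughout we view $\widetilde{[-s,s]^n}$ and $\widetilde{]-s,s[^n}$ as subsets of $\widetilde{\KK^n}$. An ultrafilter on a definable subset $A$ is identified with the ultrafilter on $\KK^n$ containing $A$; this is legitimate because $A$ is closed in $\KK^n$ when $A=[-s,s]^n$, and open when $A=]-s,s[^n$.

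\emph{First equality.} Let $\alpha\in\widetilde{\KK^n}^\arch$. We have $\widetilde{\KK^n}^\arch\subset\widetilde{\KK^n}^{\max}$ by \Cref{lem:max}. Applying \Cref{lem:arch_char} to the continuous functions $x_i$, we find $D'\in\alpha$ and $c\in\KK$ with $\abs{x_i}\le c$ on $D'$. Hence $]-s,s[^n\in\alpha$ for $s=c+1$, which gives the inclusion $\subset$. Conversely, let $\alpha$ be a closed point containing $]-s,s[^n$. Every $f\in\cC^{\definable}(\KK^n)$ is bounded on the closed bounded set $[-s,s]^n$ by some $c\in\KK$ (\cite[Th.~3.4]{costeIntroductionOminimalGeometry1999}), so $\cC^{\definable}(\KK^n)/\p_\alpha$ is relatively archimedean over $\KK$. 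By \Cref{lem:max}, $\KK(\alpha)$ is relatively archimedean over this quotient, and therefore over $\KK$. The same bound can also be phrased through \Cref{lem:arch_char}, using $D'=]-s,s[^n$ for functions defined on sets in $\alpha$: restrict such a function to a closed bounded piece, or use a definable continuous extension.

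\emph{Second and third equalities.} By \Cref{lem:closed_bounded}, $\widetilde{[-s,s]^n}^{\max}=\widetilde{[-s,s]^n}^{\arch}$, since $[-s,s]^n$ is closed and bounded. The remaining point is to compare these with the first union.

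Let $\alpha\in\widetilde{]-s,s[^n}\cap\widetilde{\KK^n}^{\max}$. Since $[-s,s]^n$ is closed in $\KK^n$, the inclusion $\widetilde{[-s,s]^n}\hookrightarrow\widetilde{\KK^n}$ is a closed embedding, so closed points of $\widetilde{\KK^n}$ lying in it are closed in $\widetilde{[-s,s]^n}$.

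Conversely, let $\beta\in\widetilde{[-s,s]^n}^{\arch}$. By \Cref{lem:archimedean_functor} applied to the inclusion, which is injective, $\beta$ lies in $\widetilde{\KK^n}^\arch$, and we have already shown that $\widetilde{\KK^n}^\arch$ equals the first union. Relative archimedeanity is intrinsic to the field $\KK(\beta)$, so this transfer is straightforward.

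\emph{Openness.} Each $\widetilde{]-s,s[^n}$ is a basic open set of the spectral topology. Hence $\widetilde{\KK^n}^\arch$ is a union of basic open sets intersected with $\widetilde{\KK^n}^{\max}$, which means it is relatively open in $\widetilde{\KK^n}^{\max}$.

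\emph{The positive orthant.} We repeat the argument with the boxes $]s^{-1},s[^n$ and $[s^{-1},s]^n$. In the first step, apply \Cref{lem:arch_char} to both $x_i$ and $x_i^{-1}$, which lie in $\cC^{\definable}(\KK^n_{>0})$. This works because $\KK^n_{>0}$ is open, hence locally closed. In the converse direction, use that $[s^{-1},s]^n$ is closed and bounded and is contained in $\KK^n_{>0}$.

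\emph{Expected obstacle.} The main subtle point is the bookkeeping between closed points of $\widetilde{Y}$ and closed points of $\widetilde{\KK^n}$ for a subset $Y$. This is handled by the fact that for a closed definable $Y$ the map $\widetilde{Y}\hookrightarrow\widetilde{\KK^n}$ is a homeomorphism onto a closed subset. The same fact is used, for relatively closed sets, in the proof of \Cref{lem:closed_bounded}.
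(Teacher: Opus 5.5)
Your proposal is correct and follows essentially the same route as the paper. Both arguments get $\widetilde{\KK^n}^\arch\subset\bigcup_s\widetilde{]-s,s[^n}\cap\widetilde{\KK^n}^{\max}$ from \Cref{lem:arch_char} applied to the coordinates, pass to $\widetilde{[-s,s]^n}^{\max}$ through the closed embedding, and then close the cycle with \Cref{lem:closed_bounded} and with \Cref{lem:archimedean_functor} for $[-s,s]^n\hookrightarrow\KK^n$.

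Your extra direct argument via \Cref{lem:max} for the reverse inclusion in the first equality is valid but redundant once the cycle closes. The aside about bounding functions defined only on some $D\in\alpha$ is not needed, and as written it would require more care: a closed bounded piece of $D$ lying in $\alpha$ exists only because $\alpha$ is closed, and continuous definable extensions need not exist.
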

\begin{proof}
    Let $\alpha \in \widetilde{X}^\arch \subset \widetilde{X}^{\max}$. By \Cref{lem:arch_char}, there exists $D \in \alpha$ and $s \in \KK$ such that that $\abs{x_i(a)} < s$ for all $a\in D$ and $i\in [n]$, i.e., $D \subset ]-s,s[^n$. Since $\alpha$ is an ultrafilter we have $]-s,s[^n \in \alpha$, proving the inclusion $\widetilde{\KK^n}^\arch \subset \bigcup_{s \in \KK} \widetilde{]-s, s[^n} \cap \widetilde{\KK^n}^{\max}$.

    The inclusion $\bigcup_{s \in \KK} \widetilde{]-s, s[^n} \cap \widetilde{\KK^n}^{\max} \subset \bigcup_{s \in \KK} \widetilde{[-s, s]^n}^{\max} $ follows from  $\widetilde{[-s,s]^n}\cap\widetilde{\KK^n}^{\max} = \widetilde{[-s,s]^n}^{\max}$, that is true because $\widetilde{[-s,s]}$ is a closed subset of $\widetilde{\KK_n}$. The equality $\bigcup_{s \in \KK} \widetilde{[-s, s]^n}^{\max} = \bigcup_{s \in \KK} \widetilde{[-s, s]^n}^{\arch}$ follows from \Cref{lem:closed_bounded}. For the inclusion $\bigcup_{s \in \KK} \widetilde{[-s, s]^n}^{\arch} \subset \widetilde{\KK^n}^\arch$, apply \Cref{lem:archimedean_functor} to the inclusion $[-s,s]^n \hookrightarrow \KK^n$.

    The proof of the second part is similar.
\end{proof}

\color{black}

\begin{proposition}
    \label{lem:surjectivity_field_extension}
    Let $\fF$ be a polynomially bounded o-minimal expansion of a real closed field $\FF$ and let $X \subset \FF^n_{>0}$ be a locally closed definable set. Let $\fM$ be an elementary extension of $\fF$ such that the underlying real closed field $\MM$ is relatively archimedean over $\FF$. The diagram
    \begin{center}
    \begin{tikzcd}[sep=large]
     \widetilde{X_{\fM}} \arrow[r, two heads]
    & \widetilde{X_{\fF}}\\
     \widetilde{X_{\fM}}^{\arch} \arrow[u, hook] \arrow[r, two heads] & \widetilde{X_{\fF}}^{\arch} \arrow[u, hook]
    \end{tikzcd}
    \end{center}
    where the horizontal maps are induced by the inclusion of rings $\cC^{\definable}(X_{\fF}) \hookrightarrow \cC^{\definable}(X_{\fM})$ (sending $f$ to $f_\fM$),
    is well-defined, commutes, and has surjective horizontal maps.
\end{proposition}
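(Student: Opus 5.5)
The plan is to work with the ultrafilter description of the o-minimal spectrum and define the horizontal map explicitly. For $\beta \in \widetilde{X_\fM}$ set
\[
\pi(\beta) \coloneqq \{\, D \in \Def(X_\fF) \mid D_\fM \in \beta \,\}.
\]
Since $\fF \prec \fM$, the map $D \mapsto D_\fM$ is an injective morphism of Boolean algebras $\Def(X_\fF) \to \Def(X_\fM)$ with $D_\fM = \emptyset$ iff $D=\emptyset$. Hence $\pi(\beta)$ is an ultrafilter. Under \Cref{Thm:OminimalSpectrum}, $\pi$ corresponds to contracting primes along $\cC^{\definable}(X_\fF) \hookrightarrow \cC^{\definable}(X_\fM)$, because $\cZ_{X_\fM}(f_\fM) = \cZ_{X_\fF}(f)_\fM$. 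So $\pi$ is the map in the statement, and commutativity of the square is automatic once the bottom map is defined as the restriction of $\pi$.

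\emph{Surjectivity of the top map.} Given $\alpha \in \widetilde{X_\fF}$, the family $\{D_\fM \mid D \in \alpha\}$ has the finite intersection property, since $D_\fM \cap D'_\fM = (D\cap D')_\fM \neq \emptyset$. By Zorn's lemma it extends to an ultrafilter $\beta$ on $\Def(X_\fM)$. Then $\pi(\beta) \supset \alpha$, and hence $\pi(\beta)=\alpha$ because both are ultrafilters.

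\emph{$\pi$ preserves archimedean points.} Let $\beta \in \widetilde{X_\fM}^\arch$, and let $f \in \cC^{\definable}(D)$ with $D \in \pi(\beta)$. By \Cref{lem:arch_char} applied to $\beta$ and $f_\fM$, there are $D' \in \beta$ and $c \in \MM$ with $\abs{f_\fM} \le c$ on $D_\fM \cap D'$. Since $\MM$ is relatively archimedean over $\FF$, we may take $c \in \FF$. The set $E=\{a \in D \mid \abs{f(a)}\le c\}$ is $\fF$-definable, and $E_\fM \supset D_\fM\cap D' \in \beta$. Therefore $E \in \pi(\beta)$, and \Cref{lem:arch_char} gives $\pi(\beta) \in \widetilde{X_\fF}^\arch$.

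\emph{Surjectivity of the bottom map.} This is the main obstacle, since an arbitrary ultrafilter extension $\beta$ of $\alpha$ need not be archimedean: it may contain sets defined with parameters from $\MM$ on which functions are unbounded. My plan is to reduce to a closed bounded set and choose $\beta$ to be a closed point.

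Let $\alpha \in \widetilde{X_\fF}^\arch$. By \Cref{lem:arch_char} applied to the coordinate functions and their inverses (as in \Cref{lem:archimedean_points_1}), there is $s\in\FF_{>0}$ with $]s^{-1},s[^n \in \alpha$. Set $Y \coloneqq \cl{X_\fF}\cap[s^{-1},s]^n$, which is closed and bounded, and view $\alpha$ as a point $\alpha' \in \widetilde{Y}$ via the inclusion. By \Cref{lem:archimedean_functor}, $\alpha'$ is archimedean, hence a closed point by \Cref{lem:closed_bounded}.

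The fibre $\pi^{-1}(\alpha')\subset \widetilde{Y_\fM}$ is nonempty by the previous step. It is also closed, being the preimage of a closed point under the spectral map $\pi$, which is continuous because preimages of basic opens $\widetilde U$ are $\widetilde{U_\fM}$. In the spectral space $\widetilde{Y_\fM}$, every nonempty closed subset contains a closed point, so choose $\beta$ closed in $\widetilde{Y_\fM}$ with $\pi(\beta)=\alpha'$. Since $Y_\fM$ is closed and bounded, $\beta$ is archimedean by \Cref{lem:closed_bounded}.

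Finally, $X_\fF \in \alpha'$, so $X_\fM \in \beta$, and $\beta$ restricts to a point of $\widetilde{X_\fM}$. That point is archimedean by the injective case of \Cref{lem:archimedean_functor} applied to $X_\fM\cap Y_\fM \hookrightarrow Y_\fM$, and it maps to $\alpha$ under $\pi$.
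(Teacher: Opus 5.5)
Your proof is correct, but the bottom surjectivity follows a different route from the paper. The other two parts match the paper in substance. Top surjectivity is the same finite-intersection-property argument. Preservation of relatively archimedean points is the paper's argument (transitivity of relative archimedeanity through $\FF(\alpha)\hookrightarrow\MM(\beta)$), rewritten in ultrafilter language via \Cref{lem:arch_char}.

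The paper lifts $\alpha$ to an arbitrary $\beta\in\widetilde{X_\fM}$ and replaces it by the unique closed point $\overline{\beta}$ in the closure of $\{\beta\}$, so it relies on uniqueness of closed specializations. It checks that $\overline{\beta}$ still lies over the closed point $\alpha$. It then concludes archimedeanity from $[c^{-1},c]^n_\fM\in\overline{\beta}$ via \Cref{lem:archimedean_functor,lem:archimedean_points_1}.

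You instead first shrink to the closed bounded set $Y$. You observe that the fibre of the continuous map $\pi$ over the closed point $\alpha'$ is closed, and take \emph{any} closed point of it; this needs only quasi-compactness and $T_0$. Archimedeanity then comes directly from \Cref{lem:closed_bounded}, and the injective case of \Cref{lem:archimedean_functor} carries it back to $X_\fM$.

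The paper's route is shorter and works with a lift in $\widetilde{X_\fM}$ directly. Yours handles merely locally closed $X$ more cleanly. The paper's last step, as justified, only uses that $\overline{\beta}$ is a closed point of $\widetilde{X_\fM}$ containing the box. That alone does not force archimedeanity unless $X$ is closed in $\FF^n_{>0}$. For example, with $X=]0,1[$, the point $1^-$ is closed in $\widetilde{X}$ and contains $X\cap[1/2,2]$, but it is not archimedean.

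The paper's conclusion is still true. Archimedeanity of $\alpha$ puts a closed bounded subset of $X_\fF$, away from the frontier of $X$, into $\alpha$, and closed sets pass from $\beta$ to its specialization $\overline{\beta}$. But this remark has to be added. In your argument it is automatic, because $X_\fF\cap Y\in\alpha'$ forces $X_\fM\cap Y_\fM\in\beta$.

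Two small wording fixes, since $X_\fF\not\subset Y$ in general:
\begin{itemize}
\item Define $\alpha'$ as the push-forward along $X_\fF\cap[s^{-1},s]^n\hookrightarrow Y$ of the restriction of $\alpha$ to $X_\fF\cap[s^{-1},s]^n\in\alpha$.
\item In the last step you mean $X_\fF\cap Y\in\alpha'$, not $X_\fF\in\alpha'$.
\end{itemize}
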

\begin{proof}
    We show that the top horizontal map 
    \[
       \widetilde{X_{\fM}} \cong \Spec \cC^{\definable}(X_{\fM})\longrightarrow \Spec \cC^{\definable}(X_{\fF}) \cong \widetilde{X_{\fF}}
    \]
    is surjective. Using the isomorphism with the spectrum of the ring of definable functions, we see that the map sends $\beta \in \widetilde{X_{\fM}}$ to the ultrafilter $ \beta\cap\fF \coloneqq\{ \, W \cap X_{\fF} \mid W \in \beta \text{ is $\FF$-definable} \, \}$.
    Now let $\alpha \in \widetilde{X_{\fF}}$, and notice that
    \begin{equation}
        \label{eq:iff_extension}
        \beta \cap \fF = \alpha \Longleftrightarrow \forall V \in \alpha, \ V_\fM \in \beta
    \end{equation}
    since for $D \in \Def(X)$ we have $D_\fM \cap \FF^n = D$. Consider the family $\beta_0 \coloneqq \{ \, V_\fM \in \Def(X_\fM) \mid V \in \alpha \, \}$. This family has the finite intersection property, i.e., any finite intersection of sets in $\beta_0$ is nonempty. As in \cite[Lem.~2.6.1]{prestelMathematicalLogicModel2011}, we can then construct an ultrafilter $\beta \in \widetilde{X_\fM}$ extending $\beta_0$. It follows from \eqref{eq:iff_extension} that $\beta \cap \fF = \alpha$.

    We now study the bottom horizontal map. We first need to show that it is well-defined, that is, for $\beta \in \widetilde{X_{\fM}}^{\arch}$, we have $ \alpha \coloneqq \beta\cap\fF \in \widetilde{X_{\fF}}^{\arch}$. Consider the commutative diagram
    \begin{center}
    \begin{tikzcd}[sep=large]
     \FF \arrow[r, hook] \arrow[d, hook]
    & \MM \arrow[d, hook]\\
     \FF(\alpha) \arrow[r, hook] & \MM(\beta)
    \end{tikzcd}
    \end{center}
    where the bottom map follows from the inclusion $\cC^{\definable}(X_{\fF})/\p_{\alpha} \hookrightarrow \cC^{\definable}(X_{\fM})/\p_{\beta}$. Since $\beta \in \widetilde{X_{\fM}}^{\arch}$, $\MM(\beta)$ is relatively archimedean over $\MM$, and by hypothesis $\MM$ is relatively archimedean over $\FF$. Therefore $\MM(\beta)$ is relatively archimedean over $\FF$, which implies that $\FF(\alpha)$ is relatively archimedean over $\FF$, i.e. $\alpha \in \widetilde{X_{\fF}}^{\arch}$.

    Now we prove that the map $\widetilde{X_{\fM}}^\arch \to \widetilde{X_{\fF}}^\arch$ is surjective. Let $\alpha \in \widetilde{X_{\fF}}^\arch$, and pick $\beta \in \widetilde{X_{\fM}}$ such that $\beta \cap \fF = \alpha$. Let $\overline{\beta}$ be the unique closed point in $\cl{\{\beta\}} \cap \widetilde{X_{\fM}}^{\max}$ (the uniqueness follows e.g. from \cite[Prop.~3.4.9]{Scheiderer2024} or \cite[Fact~3.2]{baroSpectralSpacesOminimal2024}).
    We first show that $\overline{\beta} \cap \fF = \beta \cap \fF = \alpha$. Notice that $\overline{\beta} \supset \beta$, and hence the inclusion $\overline{\beta} \cap \fF \supset \beta \cap \fF$ follows. But $\alpha \in \widetilde{X_{\fF}}^\arch \subset \widetilde{X_{\fF}}^{\max}$ is closed, and thus $\overline{\beta} \cap \fF$, a specialization of $\beta \cap \fF = \alpha$, is itself equal to $\alpha$.

    Since $\alpha \in \widetilde{X_{\fF}}^\arch$, for every $i \in [n]$ we have $c^{-1}\le x_i({\alpha}) \le c$ for some $c \in \FF$, which implies that $[c^{-1},c]^n \in \alpha$ from \eqref{Def:Kalpha_Formulas}. But then $[c^{-1},c]^n_{\fF} \in \overline{\beta}$ by \eqref{eq:iff_extension},
    i.e. $c^{-1}\le {x_i({\overline{\beta}})} \le c$ for every $i \in [n]$. Now, it follows from \Cref{lem:archimedean_functor,lem:archimedean_points_1} that $\overline{\beta} \in \widetilde{X_{\fM}}^{\arch}$.

    Finally, the commutativity of the diagram is obvious by definition.
\end{proof}

\subsection{Valuations and relatively archimedean points}
\label{Sec:RelArchPointsVal}
We now study natural valuations associated to the residue field of every relatively archimedean point, in the case the base field admits a nontrivial valuation. The next lemma is the analog to \cite[Lemma~3.13]{jellRealTropicalizationAnalytification2020} in our context.

\begin{lemma}
    \label{lem:valuation_archimedean}
    Assume that $\FF\subset \LL$ is an extension of real closed fields, that $\FF$ has an order compatible (rank one) valuation $\val_\FF \colon \FF \to \Gamma_\FF \cup \{ \infty \}$, and that $\LL$ is archimedean over $\FF$.
    Then there is a unique order compatible (rank one) valuation $\val_{\LL}$ extending $\val_{\FF}$, defined  as:
    \begin{align}
        \label{Eq:Lemma:UpperLowerBounds}
        \val_\LL(x) = \sup\{ \, \val_\FF(a) \mid a\in \FF, \ a \ge x  \,\} = \inf\{ \, \val_\FF(a) \mid a\in \FF, \ a \le x  \,\}
    \end{align}
    This is the valuation having as valuation ring the convex hull of $\cO_\FF$ in $\LL$, i.e. the elements in $\LL$ which are upper and lower bounded by elements in $\cO_{\FF}$.
\end{lemma}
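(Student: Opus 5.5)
We argue at the level of valuation rings: order compatible valuations of $\LL$ correspond to convex subrings of $\LL$. Set
\[
\cO_\LL \coloneqq \{\, x \in \LL \mid \exists a \in \cO_\FF \text{ with } \abs{x} \le a \,\},
\]
the convex hull of $\cO_\FF$ in $\LL$. This set is convex and closed under addition and multiplication, so it is a convex subring; it is therefore a valuation ring of $\LL$ with maximal ideal
\[
\mathfrak m_\LL = \{\, x \mid \abs{x} \le b \text{ for some } b \in \mathfrak m_\FF \,\}.
\]
It also satisfies $\cO_\LL \cap \FF = \cO_\FF$, by convexity of $\cO_\FF$ in $\FF$. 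Hence the associated valuation extends $\val_\FF$, up to identifying value groups.

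The main point is that the value group $\Gamma_\LL = \LL^\times/\cO_\LL^\times$ embeds in $\RR$ and extends $\Gamma_\FF$. Take $x \in \LL_{>0}$. Since $\LL$ is archimedean over $\FF$, there exist $a, b \in \FF_{>0}$ with $a \le x \le b$. For $\gamma \in \Gamma_\FF$, order compatibility gives: $\val(x) \le \gamma$ whenever there is $c \in \FF$ with $c \ge x$ and $\val(c) = \gamma$, and $\val(x) \ge \gamma$ when there is such $c \le x$. Thus $\val(x)$ is squeezed between the two sets of \eqref{Eq:Lemma:UpperLowerBounds}, and both sets are nonempty.

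We then need three facts.
\begin{enumerate}
\item $\sup\{\val(a) \mid a \ge x\} = \inf\{\val(a) \mid a \le x\}$ in $\RR$. If $a \le x \le a'$ with $\val(a') < \val(a)$, this is fine. For the reverse, suppose there is a gap $\delta > 0$ between the sup and the inf. Since $\Gamma_\FF$ is a nontrivial subgroup of $\RR$, we may take $\gamma \in \Gamma_\FF$ strictly inside the gap (density when $\Gamma_\FF$ is dense, and a discreteness argument otherwise). The element $t^\gamma$ is then comparable with $x$, which contradicts the definition of the sup or of the inf.
\item The map $x \mapsto \val_\LL(x)$ defined by this common value is a valuation. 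Multiplicativity follows by multiplying bounds. The ultrametric inequality follows from $x + y \le 2\max(x,y)$ for positive elements, with $\val(2) = 0$; for mixed signs we use convexity.
\item Its valuation ring is exactly $\cO_\LL$. If $\val_\LL(x) \ge 0$, then $x$ is bounded by elements of $\FF$ of valuation approaching $\ge 0$. Here the rank one property is essential: the infimum being $\ge 0$ means that for every $\epsilon > 0$ there is $a \le x$ with $\val(a) > -\epsilon$, and we must place $x$ under an element of $\cO_\FF$.
\end{enumerate}

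The step I expect to be the main obstacle is the case $\val_\LL(x) = 0$ in the last fact, together with the discrete value group case in the first. There, I would use the valuation ring route directly: show that $\cO_\LL$ has rank one, since every convex subring of $\LL$ strictly between $\cO_\LL$ and $\LL$ would intersect $\FF$ in a convex subring strictly between $\cO_\FF$ and $\FF$, which is impossible because $\val_\FF$ has rank one. A rank one valuation has an archimedean value group. The embedding $\Gamma_\FF \hookrightarrow \Gamma_\LL \subset \RR$ is then unique once $1$ is normalized, and order compatibility yields \eqref{Eq:Lemma:UpperLowerBounds} directly from the sandwich inequalities.

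Uniqueness follows the same pattern. Any order compatible valuation extending $\val_\FF$ has a convex valuation ring $\cO'$ with $\cO' \cap \FF = \cO_\FF$. Then $\cO'$ contains the convex hull $\cO_\LL$. If $\cO'$ strictly contained $\cO_\LL$, then $\cO'$ would contain some $x$ with $\abs{x} \ge c$ for every $c \in \cO_\FF$. Since $\LL$ is archimedean over $\FF$, we also have $\abs{x} \le b$ for some $b \in \FF$. By convexity $b \in \cO'$, and since $b \in \FF$, we get $b \in \cO' \cap \FF = \cO_\FF$. Then $\abs{x} \le b$ places $x$ in $\cO_\LL$, a contradiction. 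Hence $\cO' = \cO_\LL$, and the rank one normalization fixes the valuation.
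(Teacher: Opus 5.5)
Your route is essentially the paper's: take $\cO_\LL$ to be the convex hull of $\cO_\FF$, prove it has rank one, and compare other extensions with it. It fails at exactly the step you flagged, namely fact (iii), equivalently the claim that $\cO_\LL$ has rank one. Neither of your justifications works.

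In the uniqueness paragraph, ``by convexity $b\in\cO'$'' misuses convexity. From $x\in\cO'$, convexity only gives elements of smaller absolute value, never an upper bound $b\ge\abs{x}$. Your claim that a convex ring strictly between $\cO_\LL$ and $\LL$ meets $\FF$ in something strictly larger than $\cO_\FF$ needs the same upward step. Your uniqueness argument also never uses rank one, so it would show that $\cO_\LL$ is the only convex valuation ring of $\LL$ lying over $\cO_\FF$. This is false.

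Here is a counterexample. Let $\val(t)=1$, and let $\LL$ be the real closure of $\FF(x)$, where $x$ realizes the cut $a<x<c$ for all $a\in\FF_{>0}$ with $\val(a)>0$ and all $c\in\FF_{>0}$ with $\val(c)\le 0$ (think of $x=1/\log(1/t)$).
\begin{enumerate}
\item The cut is free, so $\LL$ is archimedean over $\FF$. Every element of $\LL$ is $f(x)$ with $f$ semialgebraic and continuous on some $[a',c']\ni x$ with $a',c'\in\FF$, hence bounded there; cf.\ \Cref{ex:ominimal_spectrum}.
\item Since $0<x<1$, we have $x\in\cO_\LL$. But $x^{-1}$ exceeds every element of $\cO_\FF$, so $x$ is a non-unit of $\cO_\LL$.
\item For every $n$ we have $t^{1/n}<x$, hence $x^n>t$. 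So the value of $x$ in $\LL^\times/\cO_\LL^\times$ is positive and infinitesimal relative to $\Gamma_\FF$, and $\cO_\LL$ does not have rank one.
\item Formula \eqref{Eq:Lemma:UpperLowerBounds} gives $\val_\LL(x)=\val_\LL(x^{-1})=0$. So the rank-one extension has valuation ring $\cO'=\{\,y\in\LL \mid \abs{y}<c \text{ for all } c\in\FF_{>0} \text{ with } \val_\FF(c)<0\,\}$.
\item $\cO'$ contains $x^{-1}$, so it strictly contains $\cO_\LL$, while still $\cO'\cap\FF=\cO_\FF$.
\end{enumerate}
This refutes fact (iii), the rank-one claim and the ring-comparison uniqueness at once.

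The paper's own proof has the matching gap. It shows that every element of $\Gamma_R$ is bounded by elements of $\Gamma_\FF$ and concludes that $\Gamma_R$ is archimedean. That does not exclude elements infinitesimal relative to $\Gamma_\FF$, and the example above produces one. So the last sentence of the statement is incorrect as stated: the valuation ring of $\val_\LL$ is $\cO'$, which contains the convex hull of $\cO_\FF$ but can be strictly larger.

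What is true, and all the paper uses later, is existence, uniqueness and \eqref{Eq:Lemma:UpperLowerBounds}. Your facts (i) and (ii) prove exactly this with no convex hull:
\begin{enumerate}
\item Define $\val_\LL(x)$ as the common value of sup and inf, using $\abs{x}$ and positive lower bounds.
\item Check multiplicativity, the ultrametric inequality, order compatibility, and that it restricts to $\val_\FF$.
\item Get uniqueness from the sandwich $\sup\le v(x)\le\inf$, which holds for any order compatible extension $v$ with values in $\RR$. This is the paper's uniqueness paragraph.
\end{enumerate}

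Two smaller corrections. First, your sandwich inequalities are reversed: $0<x\le c$ gives $\val(x)\ge\val(c)$, so valuations of upper bounds lie below $\val(x)$. Second, no discreteness argument is needed. A nontrivially valued real closed field has divisible, hence dense, value group, and density is exactly what makes the sup and the inf coincide.
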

\begin{proof}
    Denote $R$ the convex hull of $\cO_{\FF}$ in $\LL$. Since $R$ is a convex subring, then $R$ is a valuation ring \cite[Prop.~2.2.4]{Knebusch2022}. In other words, $R$ is the smallest convex valuation ring of $\LL$ containing $\cO_\FF$. 
    We show that the valuation {$v_R \colon \LL^\times \to \LL^\times / R^\times \eqqcolon \Gamma_R$} associated to $R$ is of rank one, or equivalently that $ \Gamma_R$ is archimedean (i.e. relatively archimedean over $\QQ$), or equivalently that $\Gamma_R$ is a subfield of $\RR$ \cite[Th.~2.1.10]{Knebusch2022}. Since $R \cap \FF = \cO_\FF$, we can embed $\Gamma_\FF = \FF^\times / \cO_\FF^\times \hookrightarrow \LL^\times / R^\times = \Gamma_R$ and $v_R |_{\FF} = \val_{\FF}$. Pick $\gamma = v_R(a) \in \Gamma_R$. Without loss of generality, assume $a > 0$. By archimedeanity, there exists $b \in \FF$ such that $a^{-1} \le b$, and hence $v_R(a) \le v_R(b^{-1}) = \val_{\FF}(b^{-1})$. Therefore, $\Gamma_R$ is archimedean over $\Gamma_\FF \subset \RR$, which implies that $\Gamma_R$ is itself archimedean, and a subfield of $\RR$.
    
    Now let $\val_\LL \colon \LL \to \Gamma_\LL \cup \{ \infty \}$ be any order compatible rank one valuation of $\LL$ extending $\val_\FF$. For $x \in \LL$, let $\mathrm{UB}_\FF(x)$ (resp. $\mathrm{LB}_\FF(x)$) the set of upper bounds (resp. lower bounds) of $x$ in $\FF$. Since $\LL$ is archimedean over $\FF$ both $\mathrm{UB}_\FF(x)$ and $\mathrm{LB}_\FF(x)$ are nonempty. The valuations are compatible with the ordering, and since $\Gamma_\FF \subset \RR$, we have
    \[
        \val_{\LL}(x) = \sup \val_{\FF}(\mathrm{UB}_\FF(x)) = \inf \val_{\FF}(\mathrm{LB}_\FF(x)) \in \RR
    \]
    for $0 \neq x \in \LL$. Hence any such valuation is uniquely determined by~\eqref{Eq:Lemma:UpperLowerBounds}, and in particular $\val_\LL = v_R$.
\end{proof}
\begin{definition}
\label{def:val_rv_alpha}
    Let $\val\colon \FF \to \Gamma_\FF \cup \{ \infty \}$ be a {nontrivial} order compatible valuation on $\FF$, and $X \subset \FF^n$ be definable in {a polynomially bounded o-minimial expansion of $\FF$}. If $\alpha \in \widetilde{X}^\arch$, we denote 
    \[
        \val_\alpha \coloneqq \val_{\FF(\alpha)} \colon \FF(\alpha) \to \Gamma(\alpha)
    \]
    the unique order compatible valuation on $\FF(\alpha)$ from \Cref{lem:valuation_archimedean}. We also denote 
    \[
        \rv_\alpha \colon \FF(\alpha)^\times \to \RV(\alpha)^\times = \FF(\alpha)^\times /(1+ \m_\alpha)
    \]
    the corresponding RV sort.
\end{definition}

\subsection{Tropicalization maps and strong density theorem}
    \label{sec:strong_density}
    Let $\fF$ be a polynomially bounded o-minimal expansion of a real closed field $\FF$, equipped with a nontrivial order compatible valuation $\val \colon \FF \to \Gamma_\FF \cup \{ \infty \}$, and let $X \subset \FF_{>0}^n$ be a definable set. In the following, we always assume that $\alpha \in \widetilde{X}^\arch$.

Let $k = k_\FF$ be the residue field of $\FF$ and $k(\alpha) \coloneqq k_{\FF(\alpha)}$ be the residue field of $\FF(\alpha)$. There exists a universal real closed field $\LL$ and $k_\FF$-embeddings $k(\alpha) \hookrightarrow \LL$. If $k$ is archimedean, then $k(\alpha)$ is archimedean as well,  and we can simply take $\LL = \RR$. In general, the field $\LL$ can be for instance constructed as follows.

Let $A = \bigotimes_{k} k(\alpha)$ be the coproduct of the family of $\FF$-algebras $\{ \, k(\alpha) \colon \alpha \in \widetilde{X}^\arch \, \}$. This can also be interpreted as the colimit of all the possible finite tensor products of the $k(\alpha)$'s, see e.g. \cite[Prop.~A6.7~b]{eisenbudCommutativeAlgebraView2004}. This implies that, if $a \in A$, then there exists $\alpha_1 , \dots , \alpha_m$ such that $a \in k(\alpha_1) \otimes \dots \otimes k(\alpha_m)$. We want to prove that $A$ is a \emph{real ring}, that is, $-1$ is not a sum of squares in $A$, which we denote by $-1 \notin \Sigma A^2$. Assume that $-1 \in \Sigma A^2$. Then there exist $\alpha_1 , \dots , \alpha_m$ such that, denoting $B \coloneqq k(\alpha_1) \otimes \dots \otimes k(\alpha_m)$, we have $-1 \in \Sigma B^2$. By the amalgamation property for real closed fields, see e.g. \cite{prestelMathematicalLogicModel2011}, there exists a real closed field $R$ and $k$-embeddings $\iota_\alpha \colon k(\alpha) \to R$ for every $\alpha$. Consider the map
\[
    \phi \colon B \longrightarrow R, \quad a_1 \otimes \dots \otimes a_m \longmapsto \iota_{\alpha_1}(a_1) \cdots \iota_{\alpha_m}(a_m)
\]
The field $\quot(B/\ker \phi)$ is a subfield of $R$, and in particular it is a real field. Hence $B$ is a real ideal \cite[Lem.~3.2.16]{Scheiderer2024}: in particular, $-1 \notin \Sigma B^2$, which is a contradiction. Therefore $-1 \notin \Sigma A^2$. This implies that there exists a prime ideal $\p \subset A$ such that $\quot(A/\p)$ is a real field \cite[Lem.~3.2.16]{Scheiderer2024}. Define $\LL$ to be the real closure of $\quot(A/\p)$ with respect to one of its field ordering.
The composition
\[
    k \hookrightarrow k(\alpha) \hookrightarrow A \twoheadrightarrow \quot(A/\p) \hookrightarrow \LL
\]
gives the desired $k$-embeddings.

Now let $\Omega = \Omega(X) \coloneqq \LL((t^\RR))$ be the field of Hahn series with coefficients in $\LL$, which is real closed since $\LL$ is real closed and $\RR$ is divisible \cite[p.~219]{allingFoundationsAnalysisSurreal1987} (see also \cite[Cor.~3.9]{hoevenTransseriesRealDifferential2006}). Notice that there is a canonical embedding $k_\FF((t^{\Gamma_\FF})) \hookrightarrow \Omega$, and a map 
\[
    \FF(\alpha) \longrightarrow \Omega, \quad a \longmapsto \ac_\alpha(a) t^{\val_\alpha(a)}
\]
which makes the diagram 
\begin{center}
    \begin{tikzcd}[sep=large]
     \FF \arrow[r, hook] \arrow[d, "\rv"]
    & \FF(\alpha) \arrow[d, "\rv_\alpha"] \arrow[r] & \Omega \arrow[d, "\rv_\Omega"]\\
     \RV_\FF \arrow[r, hook]& \RV(\alpha) \arrow[r, hook] & \RV_\Omega
    \end{tikzcd}
\end{center}
commute.

\begin{definition}
Using the notations from \Cref{def:val_rv_alpha} and the previous conventions, we define the \emph{fine tropicalization map} as
\[  
    \ftrop \colon \widetilde{X}^\arch \to \RV_\Omega \cong \LL \times \RR^n, \quad  \alpha \longmapsto (\rv_\alpha(x_1(\alpha)) , \dots , \rv_\alpha(x_n(\alpha)))
\]
where we use the embddings $k(\alpha) \hookrightarrow \LL$, and the \emph{tropicalizaiton map}
\[  
    \trop \colon \widetilde{X}^\arch \to \RR^n, \quad  \alpha \longmapsto (\val_\alpha(x_1(\alpha)) , \dots , \val_\alpha(x_n(\alpha)))
\]
Moreover, we equip $\Omega$ with the topology of open sets $U_\Omega$, where $U \subset k_\FF((t^{\Gamma_\FF}))$ is open in the strong topology and $U_\Omega$ is the field extension to $\Omega$. We equip $\RV_\Omega^\times = \Omega^\times/(1+\m_\Omega)$ with the quotient topology, and refer to both of these topologies as the \emph{$\FF$-topology} on $\Omega$ resp. on $\RV_\Omega^\times$.
\end{definition}
\begin{remark}
    The topology defined on $\Omega$ can be strictly coarser than the strong topology induced by the ordering. For example, if $\FF = \RR\{ t \}$ the field of real algebraic Puiseux series, then $\Omega \cong \RR((t^\RR))$. Consider the set $]t^\pi, 2t^\pi[$, which is open in the strong topology of $\Omega$. However, any $\FF$-open set containing an element with valuation $\pi$ must contain the set $\RR^\times \cdot t^\pi$, showing that $]t^\pi, 2t^\pi[$ is not open in the $\FF$-topology.
\end{remark}
\begin{lemma}
    \label{lem:trop_cont}
   Let $\fF$ be a polynomially bounded o-minimal expansion of a real closed field $\FF$, equipped with a nontrivial order compatible valuation $\val \colon \FF \to \Gamma_\FF \cup \{ \infty \}$, and let $X \subset \FF_{>0}^n$ be a definable set. Equip $\Omega$ as before with the $\FF$-topology. The maps $\ftrop \colon \widetilde{X}^\arch \to \RV_\Omega$ and $\trop \colon \widetilde{X}^\arch \to \RR^n$ are continuous.
\end{lemma}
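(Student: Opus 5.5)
We need continuity of $\ftrop$ and $\trop$ on $\widetilde{X}^\arch$ with the relative spectral topology. The plan is to reduce everything to a statement about preimages of basic open sets, and to show that each such preimage is a union of sets of the form $\widetilde{U}\cap\widetilde{X}^\arch$ with $U\subset X$ definable open. Since $\trop$ is the composition of $\ftrop$ with the map $\RV_\Omega^\times \to \RR^n$ induced by the valuation, it is natural to treat $\ftrop$ first. However, the $\FF$-topology on $\RV_\Omega$ need not make $\val$ continuous into the Euclidean topology when $\Gamma_\FF$ is not dense in the relevant sense. I will therefore handle $\trop$ directly as well, with the same method. The projection to a coordinate is definable, so by working coordinatewise (taking finite intersections of basic opens) it suffices to treat $n=1$, i.e. the single function $x_i(\alpha)$.

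For $\trop$, take an open interval $]u,v[\subset\RR$ and a point $\alpha$ with $\val_\alpha(x_i(\alpha))\in\,]u,v[$. Because $\Gamma_\FF$ is dense in $\RR$ (it is a nontrivial divisible subgroup), I choose $u<\gamma_1<\val_\alpha(x_i(\alpha))<\gamma_2<v$ with $\gamma_j\in\Gamma_\FF$. By the sup/inf description in \Cref{lem:valuation_archimedean}, combined with order compatibility, this gives $t^{\gamma_2}<x_i(\alpha)<t^{\gamma_1}$ in $\FF(\alpha)$. The strict inequalities are needed here: by \eqref{Eq:Lemma:UpperLowerBounds} there is some $a\in\FF$ with $a\le x_i(\alpha)$ and $\val(a)<\gamma_2$, so $t^{\gamma_2}<a\le x_i(\alpha)$, and symmetrically for the other side. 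By \eqref{Def:Kalpha_Formulas}, the definable open set $U=\{a\in X\mid t^{\gamma_2}<a_i<t^{\gamma_1}\}$ then lies in $\alpha$. Conversely, every $\beta\in\widetilde U^\arch$ satisfies $t^{\gamma_2}<x_i(\beta)<t^{\gamma_1}$, so $\val_\beta(x_i(\beta))\in[\gamma_1,\gamma_2]\subset\,]u,v[$. This proves that $\trop$ is continuous.

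For $\ftrop$, a basic open set of $\RV_\Omega^\times$ is the image of $U_\Omega$, where $U\subset k_\FF((t^{\Gamma_\FF}))$ is strong-open. I will use the preimage of an open set under the quotient map, which is $(1+\m_\Omega)$-saturated. Let $z=\ftrop(\alpha)=\rv_\Omega(\ac_\alpha(x_i(\alpha))t^{w})$, where $w=\val_\alpha(x_i(\alpha))$. The aim is to find an element $c\in\FF$ and a definable open interval $J\ni x_i(\alpha)$ with endpoints in $\FF$, such that every $\beta\in\widetilde{\{a_i\in J\}}^\arch$ has $\ftrop(\beta)_i$ lying in the given open set. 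Since the set is open and saturated, it contains a box of the form $]b_1,b_2[_\Omega$ around the representative, with $b_j\in k_\FF((t^{\Gamma_\FF}))$. The key step is to compare orderings: the embedding $\FF(\alpha)\to\Omega$, $a\mapsto\ac_\alpha(a)t^{\val_\alpha(a)}$, is order-preserving on positive elements. This follows because $\ac_\alpha$ respects sign and $\val_\alpha$ is order compatible. I then pick $b_1',b_2'\in\FF$ whose images lie inside $]b_1,b_2[$ and straddle the image of $x_i(\alpha)$, and set $J=\,]b_1',b_2'[$.

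The main obstacle is this last step. The difficulty is that $\FF$ must be "dense enough" in $\Omega$ around $\ac_\alpha(x_i(\alpha))t^w$, whereas $w$ may lie outside $\Gamma_\FF$ and the angular component may lie outside $k_\FF$. If $w\notin\Gamma_\FF$, I expect the saturated open set to contain all of $\LL^\times t^{w}$ together with a whole valuation band around it. This is the phenomenon seen in the Remark, and it reduces the case to the $\trop$ argument above. If $w\in\Gamma_\FF$, I reduce after multiplying by $t^{-w}$ to the residue level, and use that $k(\alpha)$ is relatively archimedean over $k_\FF$ to find bounds from $k_\FF$. The delicate point is making this dichotomy rigorous from the definition of the $\FF$-topology, which is where I expect most of the work to lie.
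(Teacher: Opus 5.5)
\paragraph{The $\trop$ part.} Your argument for $\trop$ is correct and complete. The box $\{a\in X\mid t^{\gamma_2}<a_i<t^{\gamma_1}\}$ lies in $\alpha$ by \eqref{Def:Kalpha_Formulas}, and every relatively archimedean point of its spectrum has $i$-th valuation in $[\gamma_1,\gamma_2]$. (The strict inequalities at $\alpha$ follow directly from order compatibility of $\val_\alpha$.)

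This differs from the paper, which first proves that $\ftrop$ is continuous and then writes $\trop=\val_{\rv_\Omega}\circ\ftrop$. Your reason for avoiding that route is unfounded. Since $\Gamma_\FF$ is dense, the set $\{a\in\Omega_{>0}\mid\gamma_1<\val(a)<\gamma_2\}$ is the union of the $\FF$-open intervals $]t^{\gamma'},t^{\gamma}[_\Omega$ with $\gamma_1<\gamma<\gamma'<\gamma_2$ in $\Gamma_\FF$. It is also $(1+\m_\Omega)$-saturated, so $\val_{\rv_\Omega}$ is continuous.

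\paragraph{The gap: continuity of $\ftrop$.} This is half of the statement, and you leave it open exactly at the crucial step. With $\theta(a)=\ac(a)\,t^{\val(a)}$, that step is to produce $b_1'<x_i(\alpha)<b_2'$ in $\FF$ with $\theta(b_1'),\theta(b_2')\in\,]b_1,b_2[_\Omega$. This is what the paper's assertion that ``$\rv_\alpha^{-1}(U)$ is open and defined over $\FF$'' encodes, and it has to be proved. Your dichotomy is the right one, but the execution needs the following points.

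\begin{enumerate}
\item The map $\theta_\beta\colon\FF(\beta)_{>0}\to\Omega$ is not an embedding. It is only weakly monotone (constant on $(1+\m_\beta)$-cosets) and restricts to $\theta$ on $\FF$. So $b_1'<x_i(\beta)<b_2'$ only gives $\theta(b_1')\le\theta_\beta(x_i(\beta))\le\theta(b_2')$, which is why both images must lie in the open interval.
\item Write $\theta_\alpha(x_i(\alpha))=c\,t^w$. If $w\notin\Gamma_\FF$, positivity forces $\val(b_2)<w<\val(b_1)$ (with the obvious conventions if $b_1\le 0$ or $b_2=+\infty$), and powers $t^\gamma$ with $\gamma\in\Gamma_\FF$ suffice. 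The same powers handle $b_1\le 0$ or $\val(b_1)>w$ when $w\in\Gamma_\FF$.
\item If $w\in\Gamma_\FF$, $c\notin k_\FF$ and $\val(b_1)=w$, you need $d'\in k_\FF$ with $\ac(b_1)<d'<c$. This is where relative archimedeanity of $k(\alpha)$ over $k_\FF$ is genuinely used. It holds because the valuation ring of $\FF(\alpha)$ is the convex hull of $\cO_\FF$, and it yields $c-\ac(b_1)\ge 1/N$ for some $N\in k_\FF$. Then take $b_1'$ to be a lift of $d'\,t^w$.
\item If $c\in k_\FF$, any admissible $b_1'$ near $c\,t^w$ may need $\rv(b_1')=(c,w)$. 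Take $b_1'=\tilde c\,t^w(1-t^\delta)$ with $\tilde c$ a lift of $c$, so that $\theta(b_1')=c\,t^w$. Writing $x_i(\alpha)=\tilde c\,t^w(1+m)$ with $m\in\m_\alpha$, the strict inequality $b_1'<x_i(\alpha)$ requires $\delta\in\Gamma_\FF$ with $0<\delta<\val_\alpha(m)$ when $m<0$; this uses density of $\Gamma_\FF$ again.
\end{enumerate}

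The upper bound is symmetric. Without these verifications the proposal proves only the $\trop$ half of the lemma.
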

\begin{proof}
    In the following, we identify every $\RV(\alpha)$ with its isomorphic copy in $\RV_\Omega$. Let $U \subset \RV_\Omega$ be $\FF$-open. Then $\rv_\alpha^{-1}(U) = \rv_\alpha^{-1}(U \cap \RV(\alpha))$ is an open subset of $X_{\fF(\alpha)}$ defined over $\FF$. Hence $\widetilde{\rv_\alpha^{-1}(U)}^\arch$ is an open subset of $\widetilde{X}^\arch$. 
    Now notice that
    \[
        \ftrop^{-1}(U) = \bigcup_{\alpha \in \widetilde{X}^\arch} \widetilde{\rv_\alpha^{-1}(U)}^\arch
    \]
    which is an open set in $\widetilde{X}^\arch$, proving that $\ftrop$ is continuous.

    The map $\val_{\rv_\Omega} \colon \RV_{\Omega, > 0} \to \RR$ is continuous: indeed, since $\Gamma_\FF$ is dense in $\RR$, it is sufficient to check that $\val_{\rv_\Omega}^{-1} (]w_1, w_1[)$ is $\FF$-open for $w_1, w_2\in \Gamma_\FF$. But this is clear by the definition of $\FF$-topology and the continuity of $\val_{\rv_\FF}$. Since $\trop = \val_{\rv_\Omega} \circ \ftrop$ the tropicalization map is also continuous.
\end{proof}
   
\begin{proposition}
    \label{prop:trop_proper}
    Let $\fF$ be a polynomially bounded o-minimal expansion of a real closed field $\FF$, equipped with a nontrivial order compatible valuation $\val \colon \FF \to \Gamma_\FF \cup \{ \infty \}$, and let $X \subset \FF_{>0}^n$ be a definable locally closed set. The map $\trop \colon \widetilde{X}^\arch \to \RR^n$ is proper.
\end{proposition}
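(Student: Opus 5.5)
We show that the preimage of every compact set is compact (quasi-compact, in the spectral sense). Since $\trop$ is continuous by \Cref{lem:trop_cont}, and every compact subset of $\RR^n$ is contained in a box, it suffices to treat $K = [-m,m]^n$ with $m \in \Gamma_\FF$. Such an $m$ exists because $\Gamma_\FF$ is a nonzero subgroup of $\RR$ containing $1$, so $\ZZ \subset \Gamma_\FF$.

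\emph{Reduction to a spectral closed set.} Put $s \coloneqq t^{-(m+1)} \in \FF_{>0}$ and consider $C \coloneqq X \cap [s^{-1}, s]^n$. This set is closed in $X$, and it is closed and bounded in $\FF^n$ provided $X$ is closed. If $X$ is only locally closed, we intersect further with a definable closed set $\{a : \mathrm{dist}(a, \cl{X}\setminus X) \ge \delta\}$; we return to this point below.

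Let $\alpha \in \widetilde{X}^\arch$ with $\trop(\alpha) \in K$. Then $\val_\alpha(x_i(\alpha)) \in [-m,m]$. By order compatibility of $\val_\alpha$ (\Cref{lem:valuation_archimedean}), this gives $t^{m+1} < x_i(\alpha) < t^{-(m+1)}$ in $\FF(\alpha)$. By \eqref{Def:Kalpha_Formulas}, the box $[s^{-1},s]^n$ therefore belongs to $\alpha$, hence $C \in \alpha$. This shows
\[
\trop^{-1}(K) \subset \widetilde{C} \cap \widetilde{X}^\arch .
\]

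\emph{Compactness.} If $C$ is closed and bounded, then $\widetilde{C}^\arch = \widetilde{C}^{\max}$ by \Cref{lem:closed_bounded}. The space $\widetilde{C}^{\max}$ is quasi-compact, since the closed points of a spectral space form a quasi-compact subspace; here we use that $\widetilde{C} \cong \Spec \cC^{\definable}(C)$ by \Cref{Thm:OminimalSpectrum}, and the standard fact that every point specializes to a closed point. Applying \Cref{lem:archimedean_functor} to the closed inclusion $C \hookrightarrow X$ identifies $\widetilde{C}^\arch$ with $\widetilde{C} \cap \widetilde{X}^\arch$, and this is a closed subspace. It follows that $\trop^{-1}(K)$ is a closed subset (by continuity) of the quasi-compact space $\widetilde{C}^\arch$, so it is quasi-compact. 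Hausdorffness of $\widetilde{X}^\arch$, if it is part of the intended notion of properness, follows from normality of the spectral space $\widetilde{X}$: closed points have disjoint neighbourhoods. Alternatively, in the real spectrum of $\cC^{\definable}$, the space $\widetilde{X}^{\max}$ is compact Hausdorff.

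\emph{Main obstacle.} The hard step is the locally closed, non-closed case. There a point $\alpha$ with bounded tropical image may still approach the frontier $\cl{X}\setminus X$ at an ``archimedean distance''. Can this happen? Let $\alpha$ be relatively archimedean and let $g$ be the definable function $1/\mathrm{dist}(x, \cl{X}\setminus X)$, which is continuous on $X$ after shrinking to the open set where $\cl{X}\setminus X$ is closed. Then $g(\alpha)$ is bounded by some $c \in \FF$ (\Cref{lem:arch_char}), so $\{\mathrm{dist} \ge c^{-1}\} \in \alpha$. The difficulty is that $c$ depends on $\alpha$, so no single closed bounded $C$ works uniformly over the whole preimage. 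What I would try first is to cover $\trop^{-1}(K)$ by the open sets $\widetilde{C_c}^\arch$, where
\[
C_c \coloneqq X \cap [s^{-1},s]^n \cap \{\mathrm{dist} \ge c^{-1}\},
\]
and compare with \Cref{lem:archimedean_points_1}, which exhibits exactly this kind of exhaustion. If uniformity fails, I would accept the statement as properness relative to $\widetilde{X}^\arch$, where compactness must be checked on exhaustions. Alternatively, I would reduce to closed $X$ via the later \Cref{lem:closure}, which is what the main theorems use.
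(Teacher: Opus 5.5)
Your argument for the case where $X$ is closed in $\FF^n_{>0}$ is the paper's argument, and that part is correct. You trap $x_i(\alpha)$ between $t^{m+1}$ and $t^{-(m+1)}$ using order compatibility and \eqref{Def:Kalpha_Formulas}. You then identify $\widetilde{X}^{\arch}\cap\widetilde{B}$, with $B$ the box, with the closed points of $\widetilde{X\cap B}$ via \Cref{lem:archimedean_functor,lem:closed_bounded}. You use that this set of closed points is quasi-compact; the paper phrases this as compactness of $\Sper(\cC^{\definable}(X\cap B))^{\max}$. Finally you conclude by continuity of $\trop$.

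The gap is exactly the one you flag, and it cannot be filled: for locally closed, non-closed $X$ the statement is false. Take $n=1$ and $X=\,]1,2]$.
Every $\alpha\in\widetilde{X}^{\arch}$ satisfies $1<x(\alpha)\le 2$ in $\FF(\alpha)$, hence $\trop(\alpha)=0$. So $\trop^{-1}(\{0\})=\widetilde{X}^{\arch}$.
Your argument with $g=1/(x-1)$ shows that the open sets $\widetilde{]1+\epsilon,2]}\cap\widetilde{X}^{\arch}$, for $0<\epsilon<1$, cover $\widetilde{X}^{\arch}$.
However, finitely many of them lie in a single $\widetilde{]1+\epsilon_0,2]}$. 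That set misses the principal (hence relatively archimedean) ultrafilter at $1+\epsilon_0/2$.
So $\trop^{-1}(\{0\})$ is not quasi-compact. The dependence of $c$ on $\alpha$ is a genuine obstruction, and your exhaustion by the $C_c$ has no finite subcover.

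Your fallback via \Cref{lem:closure} would be circular, since that lemma is proved using this proposition. Declaring properness ``checked on exhaustions'' is a different, weaker statement.

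The paper's proof has the same hole. Its displayed chain identifies $\widetilde{X}^{\arch}\cap\widetilde{B}=\widetilde{X\cap B}^{\arch}$ with $\Sper(\cC^{\definable}(X\cap B))^{\max}\cong\widetilde{X\cap B}^{\max}$. By \Cref{lem:closed_bounded}, these agree only when $X\cap B$ is closed. In the example, $1^+$ is a closed point of $\widetilde{X}$ that is not relatively archimedean.

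The hypothesis should be that $X$ is closed in $\FF^n_{>0}$; with it your proof is complete. This is the only case \Cref{thm:fundamental_theorem} needs, after $X$ is replaced by $\cl{X}$. The application of properness to a non-closed $X$ inside \Cref{lem:closure} then requires a separate justification.
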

\begin{proof}
    Let $K \subset \RR^n$ be compact. Then there exists $r \in \RR$ such that \[K \subset \; ]-r, \, r[^n \;\,=\;\,  ] \val(t^{-r}) , \val(t^r)[^n\]
    It follows from the compatibility of the valuation that, if $\alpha \in \widetilde{X}^{\arch}$ is such that $\trop(\alpha) \in K$, we have $t^{r+1} \le t^{r} < x_i(\alpha) < t^{-r} \le t^{-r-1}$ for every $i=1, \dots, n$ (we are using the embedding $\FF \subset \FF(\alpha)$). This implies that \[\trop^{-1}(K) \subset \widetilde{X}^{\arch} \cap \widetilde{[t^{r+1}, t^{-r-1}]^n} = \widetilde{X}^{\max}\cap\widetilde{[t^{r+1}, t^{-r-1}]^n}^{\max} \cong \sper (\cC^{\definable}(X \cap [t^{r+1}, t^{-r-1}]^n))^{\max} \]
    where the equality follows from \Cref{lem:archimedean_functor,lem:closed_bounded}.
    The set $\sper (\cC(X \cap [t^{-r-1}, t^{r+1}]^n))^{\max}$ is a compact (Hausdorff) space, see e.g. \cite[Prop.~3.4.19]{Scheiderer2024}. Since $\trop$ is continuous from \Cref{lem:trop_cont}, $\trop^{-1}(K)$ is a closed subset of a compact space, and hence compact.
\end{proof}
\begin{remark}
    In \cite[Lem.~3.8]{jellRealTropicalizationAnalytification2020} the corresponding properness statement for the tropicalization map is proven in the case of real algebraic varieties, but it not extended to arbitrary semialgebraic sets. On the contrary, in \Cref{prop:trop_proper} we prove properness in the case of arbitrary (locally closed) definable sets: this will simplify the proof of  \Cref{thm:fundamental_theorem}, compared to the proof of \cite[Th.~6.9]{jellRealTropicalizationAnalytification2020}.
\end{remark}

\begin{lemma}
    \label{lem:closure}
    Let $\fF$ be a polynomially bounded o-minimal expansion of a real closed field $\FF$, equipped with a nontrivial order compatible valuation, and let $X \subset \FF_{>0}^n$ be a definable set. Denote $\cl{X} \subset \FF^n_{>0}$ the relative closure of $X$ in $\FF^n_{>0}$. Then we have $$\trop(\widetilde{\cl{X}}^\arch) = \overline{\trop(\widetilde{{X}}^\arch)} = \trop(\widetilde{{X}}^\arch).$$
\end{lemma}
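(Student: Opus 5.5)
The plan is to prove the two inclusions $\trop(\widetilde{X}^\arch) \subset \trop(\widetilde{\cl{X}}^\arch)$ and $\trop(\widetilde{\cl{X}}^\arch) \subset \trop(\widetilde{X}^\arch)$, and separately to show that $\trop(\widetilde{\cl{X}}^\arch)$ is closed in $\RR^n$. Together these give all three equalities: the image $T \coloneqq \trop(\widetilde{X}^\arch)$ equals the closed set $\trop(\widetilde{\cl{X}}^\arch)$, hence $\overline{T} = T$.

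\emph{Step 1: the easy inclusion.} For $T \subset \trop(\widetilde{\cl{X}}^\arch)$, apply \Cref{lem:archimedean_functor} to the inclusion $\iota\colon X \hookrightarrow \cl{X}$. A point $\alpha \in \widetilde{X}^\arch$ is sent to $\widetilde{\iota}(\alpha) \in \widetilde{\cl{X}}^\arch$. The residue fields $\FF(\alpha)$ and $\FF(\widetilde{\iota}(\alpha))$ are identified, since a definable function on a set in $\widetilde\iota(\alpha)$ restricts to one on a set in $\alpha$, and conversely via definable extension. Both coordinate functions $x_i$ are the same element. By the uniqueness in \Cref{lem:valuation_archimedean}, the valuations agree, so $\trop(\alpha) = \trop(\widetilde{\iota}(\alpha))$.

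\emph{Step 2: closedness.} The set $\cl{X}$ is closed in $\FF^n_{>0}$, hence locally closed. \Cref{lem:trop_cont} and \Cref{prop:trop_proper} then show that $\trop\colon \widetilde{\cl{X}}^\arch \to \RR^n$ is continuous and proper. A continuous proper map into the locally compact Hausdorff space $\RR^n$ is closed, so $\trop(\widetilde{\cl{X}}^\arch)$ is closed. If needed, I will argue directly: for $w$ in the closure of the image, take a compact ball $K$ around $w$. Then $\trop^{-1}(K)$ is compact, and the images $\trop(\trop^{-1}(K))$ are compact subsets of $K$ accumulating at $w$, so $w$ is attained.

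\emph{Step 3: the main obstacle, $\trop(\widetilde{\cl{X}}^\arch) \subset \trop(\widetilde{X}^\arch)$.} Let $\beta \in \widetilde{\cl{X}}^\arch$, and set $b \coloneqq x(\beta) \in \FF(\beta)^n_{>0}$. By \eqref{Def:Kalpha_Formulas} and $\fF \prec \fF(\beta)$, we have $b \in (\cl{X})_{\fF(\beta)}$, which is the closure of $X_{\fF(\beta)}$ in $\FF(\beta)^n_{>0}$, because ``$\cl{X}$ is the closure of $X$'' is first order.

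Since $t\, b_i > 0$ in $\FF(\beta)$, there is $a \in X_{\fF(\beta)}$ with $\abs{a_i - b_i} < t\, b_i$ for all $i$. By order compatibility of $\val_\beta$, this gives $\val_\beta(a_i - b_i) > \val_\beta(b_i)$, hence $\val_\beta(a_i) = \val_\beta(b_i)$.

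Now let $\alpha \coloneqq \{\, D \in \Def(X) \mid a \in D_{\fF(\beta)} \,\}$. This is an ultrafilter on $\Def(X)$, i.e.\ the type of $a$ over $\FF$. Its residue field $\FF(\alpha)$ embeds over $\FF$ into $\FF(\beta)$ via $f(\alpha) \mapsto f_{\fF(\beta)}(a)$, again by \eqref{Def:Kalpha_Formulas}, and this embedding sends $x_i(\alpha) \mapsto a_i$. Since $\FF(\beta)$ is relatively archimedean over $\FF$, so is the subfield $\FF(\alpha)$; thus $\alpha \in \widetilde{X}^\arch$.

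The restriction of $\val_\beta$ to $\FF(\alpha)$ is an order-compatible rank one valuation extending $\val_\FF$. By the uniqueness in \Cref{lem:valuation_archimedean}, it equals $\val_\alpha$. Therefore $\trop(\alpha) = \val_\beta(a) = \val_\beta(b) = \trop(\beta)$.

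The delicate points are checking that the map $\FF(\alpha) \to \FF(\beta)$ is a well-defined field embedding, and that the closure transfers to $\fF(\beta)$. Both follow from \L o\'s' theorem in the form \eqref{Def:Kalpha_Formulas}.
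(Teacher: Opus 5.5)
Your proof is correct, but at the key step it takes a different route from the paper.

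\textbf{The paper's route.} The paper argues topologically. Since the tilde operator commutes with closures, the relative closure of $\widetilde{X}^\arch$ in $\widetilde{\FF^n_{>0}}^\arch$ is $\widetilde{\cl{X}}^\arch$. Continuity of $\trop$ then gives $\trop(\widetilde{\cl{X}}^\arch) \subset \overline{\trop(\widetilde{X}^\arch)}$. Finally, the paper invokes \Cref{prop:trop_proper} for \emph{both} $\widetilde{\cl{X}}^\arch$ and $\widetilde{X}^\arch$ to conclude that both images are closed.

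\textbf{Your route.} You prove the reverse inclusion $\trop(\widetilde{\cl{X}}^\arch) \subset \trop(\widetilde{X}^\arch)$ pointwise. You approximate $x(\beta)$ within relative error $t$ by some $a \in X_{\fF(\beta)}$, and realize the type of $a$ over $\FF$ as an archimedean point $\alpha$. This is essentially the mechanism the paper itself uses at the end of the proof of \Cref{thm:fundamental_theorem}, via $\trop(\widetilde{X_\fF}^\arch)=\bigcup_\alpha \val_\alpha(X_{\fF(\alpha)})$ and \Cref{prop:ValImageClosedM}. Your relative-error approximation is more elementary than that.

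\textbf{What each buys.} The paper's argument is shorter and never works inside the ultrapower. Yours needs properness only on $\widetilde{\cl{X}}^\arch$, and this is a real gain. The proof of \Cref{prop:trop_proper} uses \Cref{lem:closed_bounded} on $X \cap [t^{r+1},t^{-r-1}]^n$, so it needs $X$ closed in $\FF^n_{>0}$, not merely locally closed.

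For example, take $X = {]1,2[} \subset \FF_{>0}$. Then $\trop^{-1}(\{0\}) = \widetilde{X}^\arch$. This set is covered by the sets $\widetilde{{]1+c,2-c[}}$ for $c \in \FF_{>0}$, and this cover has no finite subcover. So the paper's appeal to properness of $\trop$ on $\widetilde{X}^\arch$ for an arbitrary definable $X$ is not justified as written, whereas your argument avoids it entirely. When you cite \Cref{prop:trop_proper} in Step 2, what makes it applicable is that $\cl{X}$ is \emph{closed} in $\FF^n_{>0}$, not just that it is locally closed.

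\textbf{Minor point.} In Step 1 no definable extension is needed. Every $D \in \alpha$ already belongs to $\widetilde{\iota}(\alpha)$, and every $D' \in \widetilde{\iota}(\alpha)$ contains $D' \cap X \in \alpha$. So the two direct systems defining the residue fields are mutually cofinal.
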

\begin{proof}
    Let $\cl{\widetilde{{X}}^\arch}$ denote the relative closure of $\widetilde{{X}}^\arch$ in $\widetilde{\FF^n_{>0}}^\arch$. We have \[\cl{\widetilde{{X}}^\arch} = \cl{\widetilde{X}} \cap \widetilde{\FF_{>0}}^\arch = \widetilde{\cl{X}} \cap \widetilde{\FF_{>0}}^\arch = \widetilde{\cl{X}}^\arch\]  where:
    \begin{itemize}
        \item the first closure is taken in $\widetilde{\FF_{>0}}^\arch$, the second one in $\widetilde{\FF_{>0}}$, and the third and fourth ones in $\FF_{>0}$;
        \item the second equality follows since the tilde operator commutes with closures, see e.g. \cite[Rem.~2.3]{edmundoSHEAFCOHOMOLOGYOMINIMAL2006}, and the third one from \Cref{lem:archimedean_functor}.
    \end{itemize}
    A proper map is a closed map, and hence the image of the maps $\trop \colon \widetilde{\cl{X}}^\arch \to \RR^n$ and $\trop \colon \widetilde{{X}}^\arch \to \RR^n$ are closed by \Cref{prop:trop_proper}. Therefore:
    \[
        \trop(\widetilde{\cl{X}}^\arch) = \overline{\trop(\widetilde{\cl{X}}^\arch)} = \overline{\trop(\cl{\widetilde{{X}}^\arch})} = \overline{\trop({\widetilde{{X}}^\arch})} = \trop(\widetilde{{X}}^\arch)
    \]
    where the third equality follows because $\trop$ is continuous by \Cref{lem:trop_cont}.
\end{proof}

The next result follows from quantifier elimination and model completeness for real closed fields with a $T$-convex subring \cite{driesTConvexityTameExtensions1995}.

\begin{theorem}[Strong density]
    \label{thm:strong_density}
    Let $\fF$ be a polynomially bounded o-minimal expansion of a real closed field $\FF$, equipped with a nontrivial order compatible valuation $\val \colon \FF \to \Gamma_\FF \cup \{ \infty \}$, and let $X\subset \FF^n_{>0}$ be definable. Then
    \[
        \trop(\widetilde{X}^\arch) \cap \Gamma_\FF^n = \val(X).
    \]
\end{theorem}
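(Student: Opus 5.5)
We prove the two inclusions separately. The inclusion $\val(X) \subset \trop(\widetilde{X}^\arch) \cap \Gamma_\FF^n$ is the easy one. A point $a \in X$ gives the principal ultrafilter $\alpha_a$. Its residue field is $\FF(\alpha_a) = \FF$, which is trivially relatively archimedean over $\FF$, so $\alpha_a \in \widetilde{X}^\arch$. By the uniqueness in \Cref{lem:valuation_archimedean}, $\val_{\alpha_a} = \val_\FF$. Since $x_i(\alpha_a) = a_i$, we get $\trop(\alpha_a) = \val(a) \in \Gamma_\FF^n$.

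For the reverse inclusion, fix $\alpha \in \widetilde{X}^\arch$ with $w \coloneqq \trop(\alpha) \in \Gamma_\FF^n$. We must produce a point $a \in X$ with $\val(a) = w$. The plan is to transfer an existential statement from $\fF(\alpha)$ down to $\fF$, in the same spirit as the proof of \Cref{prop:density}.

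First, the point $x(\alpha) = (x_1(\alpha),\dots,x_n(\alpha)) \in \FF(\alpha)^n$ lies in $X_{\fF(\alpha)}$. This follows from \eqref{Def:Kalpha_Formulas}, since $X \in \alpha$ and the coordinate functions satisfy $x(a) \in X$ for all $a \in X$.

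Second, we equip $\FF(\alpha)$ with the convex valuation ring $\cO_\alpha$ of \Cref{lem:valuation_archimedean}, namely the convex hull of $\cO_\FF$. Then $(\fF, \cO_\FF) \subset (\fF(\alpha), \cO_\alpha)$ is an extension of $T$-convex structures: $\fF \prec \fF(\alpha)$, $\cO_\alpha \cap \FF = \cO_\FF$, and both rings are $T$-convex because the theory is polynomially bounded, by \cite[(4.2)~Prop.]{driesTConvexityTameExtensions1995}. Van den Dries--Lewenberg quantifier elimination yields model completeness of the theory $T_{\mathrm{convex}}$, so this extension is elementary.

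Third, we transfer. Choose $c_i = t^{w_i} \in \FF$, so that $\val(c_i) = w_i$. The condition $\val(b_i) = w_i$ is definable in the language with the predicate for $\cO$, namely $b_i/c_i \in \cO$ and $c_i/b_i \in \cO$, with parameters from $\FF$. Hence the sentence
\[
\exists b \in X \ \textstyle\bigwedge_i \big( b_i c_i^{-1} \in \cO \wedge c_i b_i^{-1} \in \cO \big)
\]
holds in $(\fF(\alpha), \cO_\alpha)$, witnessed by $b = x(\alpha)$, because $\val_\alpha(x_i(\alpha)) = w_i$. By elementarity it holds in $(\fF,\cO_\FF)$. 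This gives $a \in X$ with $\val(a) = w$.

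The main obstacle is the second step: justifying that $(\fF,\cO_\FF) \preceq (\fF(\alpha),\cO_\alpha)$. This needs three things:
\begin{itemize}
\item $\fF \prec \fF(\alpha)$, which holds by Łoś's theorem for Boolean ultrapowers;
\item $\cO_\alpha$ is $T$-convex, which holds by polynomial boundedness;
\item $\cO_\alpha$ lies over $\cO_\FF$.
\end{itemize}
For the last point, $\cO_\alpha \cap \FF = \cO_\FF$ holds because $\cO_\FF$ is convex in $\FF$. The subtle part is that model completeness is proved for nontrivially valued $T$-convex pairs, so nontriviality of $\cO_\FF$ is used here. We also possibly need the extended language of \cite[Rem.~3.11]{driesTConvexityTameExtensions1995}, but the transferred formula only uses the predicate $\cO$, so this does not matter.
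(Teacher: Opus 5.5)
Your proposal is correct and follows the paper's proof. The easy inclusion comes from the embedding $X \hookrightarrow \widetilde{X}^\arch$. For the reverse inclusion you observe that $x(\alpha) \in X_{\fF(\alpha)}$ and pull the existence of a point of valuation $w$ down from $(\fF(\alpha),\cO_\alpha)$ to $(\fF,\cO_\FF)$ by van den Dries--Lewenberg model completeness, which is exactly \Cref{prop:density} applied with $\fF(\alpha)$ in place of $\fM$. You simply spell out the checks the paper leaves implicit: $T$-convexity of the convex hull $\cO_\alpha$, the equality $\cO_\alpha \cap \FF = \cO_\FF$, and nontriviality. The extended language of \cite[Rem.~3.11]{driesTConvexityTameExtensions1995} is harmless, but for a different reason than the one you give: $\fF \prec \fF(\alpha)$ is already $\cL$-elementary, so it is automatically an embedding of the definitional expansions.
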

\begin{proof}
    The inclusion $\trop(\widetilde{X}^\arch) \cap \Gamma_\FF^n \supset \val(X)$ follows from the embedding $X \hookrightarrow \widetilde{X}^\arch$ and the compatibility of the valuations $\val_\alpha$ with the valuation of $\FF$. For the converse inclusion, let $w \in \trop(\widetilde{X}^\arch) \cap \Gamma_\FF^n $ and $\alpha \in \widetilde{X}^\arch$ such that $\val_\alpha(x_i(\alpha)) = w_i$ for $i = 1, \dots , n$. The extension $\fF \prec \fF(\alpha)$ is elementary, and $\alpha \in X_{\fF(\alpha)}$. \Cref{prop:density} (whose proof was based on \cite{driesTConvexityTameExtensions1995}) implies that there exists $a \in X$ with $\val(a) = w$.
\end{proof}
\subsection{The fundamental theorem}
\label{sec:fundamental_theorems}
In this final section, we complete the proof of \Cref{Thm:A} and \Cref{Thm:B}.
Recall from \eqref{Def:Trop} the definition $\Trop(X) = \overline{\val_\FF(X_\fF)}$.
\begin{theorem}[Fundamental Theorem of Tropical O-minimal Geometry]
    \label{thm:fundamental_theorem}
    Let $\fK \prec \fF \prec \fM$ and $\KK\subset \FF \subset \MM$ be as in Assumption~\ref{ass:elementary_extensions}, and let $X \subset \KK_{>0}^n$ be definable.
    Then the following sets are equal:
    \begin{enumerate}
       \item $\trop ( \widetilde{X_\fF}^\arch)$;
        \item $\Trop(X)$;
          \item  $\overline{\{ \, w \in \Gamma^n_\mathbb{F} \mid \initial_w(X_\fF) \neq \emptyset \, \}}$;
        \item $\val_\MM(X_\fM)$;
        \item $\bigcap_{f \in \mathcal{P}(X)}     \{ \, w \in \RR^n \mid \trop_{r}(f)(1,w) \geq 0 \, \}$.
    \end{enumerate}
\end{theorem}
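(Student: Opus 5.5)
The plan is to take $\Trop(X)=\overline{\val(X_\fF)}$, the set in (ii), as the pivot and show that each of the other four sets equals it. Three of these comparisons follow from results already established. The remaining one, for (i), needs the spectral machinery.

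The three quick comparisons are as follows.

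\textbf{(ii)=(iii).} By Theorem~\ref{Thm:WeakFundamentalTrop} applied to $X_\fF$, we have $\val(X_\fF)=\{w\in\Gamma_\FF^n\mid \initial_w(X_\fF)\neq\emptyset\}$. Taking closures in $\RR^n$ gives the equality.

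\textbf{(ii)=(v).} Lemma~\ref{Lemma:WeakFundamentalTropOneInclusion} gives finitely many $f_1,\dots,f_k\in\pos(X)$ whose real tropical sets cut out exactly $\overline{\val(X_\fF)}$. Hence (v) is contained in (ii), and the intersection in (v) can be taken finite. For the reverse inclusion, it suffices to show that $\{w\in\RR^n\mid \trop_r(f)(1,w)\ge0\}$ is closed and contains $\val(X_\fF)$ for every $f\in\pos(X)$. Containment follows from Corollary~\ref{Cor:WeakFundamentalTrop}, since $\pos(X)\subset\pos(X_\fF)$ by elementarity. Closedness holds because the complement $\{\trop_r(f)(1,w)<0\}$ is open. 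Indeed, by Lemma~\ref{Lemma:TropExtensionSumMinimum}, this condition says that the minimum of the finitely many affine functions $\val(c_\lambda)+\lambda\cdot w$ is attained only at negative coefficients, which is an open condition in $w$.

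\textbf{(ii)=(iv).} Apply the same reasoning with $\fM$ in place of $\fF$: $\val(X_\fM)$ is closed in $\Gamma_\MM^n=\RR^n$ by Proposition~\ref{prop:ValImageClosedM}, and Lemma~\ref{Lemma:WeakFundamentalTropOneInclusion} applied to $\fK\prec\fM$ shows that $\val(X_\fM)$ is cut out by finitely many $\trop_r(f_i)$ with $f_i\in\pos(X)$. So $\val(X_\fM)=\overline{\val(X_\fM)}$ equals (v), which was just shown to equal (ii).

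\textbf{(i)=(ii).} This step carries the spectral content. Using Lemma~\ref{lem:closure}, replace $X_\fF$ by its relative closure, so that $X_\fF$ is locally closed. This changes neither $\trop(\widetilde{X_\fF}^\arch)$ nor $\overline{\val(X_\fF)}$, the latter by Proposition~\ref{prop:ValImageClosedM}. Since $X_\fF\hookrightarrow\widetilde{X_\fF}^\arch$ compatibly with valuations, $\val(X_\fF)\subset\trop(\widetilde{X_\fF}^\arch)$. The right side is closed by Proposition~\ref{prop:trop_proper}, as a proper map is closed, so (ii) is contained in (i).

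For the converse, take $\alpha\in\widetilde{X_\fF}^\arch$ and set $w=\trop(\alpha)$. The residue field $\FF(\alpha)$ carries the elementary extension $\fF(\alpha)\succ\fF$, which is archimedean over $\FF$. The point $x(\alpha)$ lies in $X_{\fF(\alpha)}$, and by Lemma~\ref{lem:valuation_archimedean} its valuation is $w$. Moreover $\val_\alpha$ extends $\val_\FF$ and is rank one, so $\Gamma(\alpha)\subset\RR$. Now apply the equality (ii)=(v), already established, to the chain $\fK\prec\fF(\alpha)$. This gives $\val(X_{\fF(\alpha)})\subset\bigcap_{f\in\pos(X)}\{\trop_r(f)(1,\cdot)\ge0\}$, which equals (ii). Hence $w$ lies in (ii).

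I expect the main obstacle to be the (i)=(ii) step. Specifically, I must check that $\fF(\alpha)$ with $\val_\alpha$ satisfies the hypotheses of Assumption~\ref{ass:elementary_extensions}, so that Lemma~\ref{Lemma:WeakFundamentalTropOneInclusion} and Corollary~\ref{Cor:WeakFundamentalTrop} apply over it. The valuation is nontrivial and order compatible, and $\fK\prec\fF(\alpha)$ is elementary by transitivity. If compatibility with the normalization or splitting causes trouble, the fallback is to argue directly with Corollary~\ref{Cor:WeakFundamentalTrop}, which needs only a nontrivially valued real closed field. For that, I would combine $\pos(X)\subset\pos(X_{\fF(\alpha)})$ with the closedness already proved.
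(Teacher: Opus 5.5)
Your proof is correct, but it is organized differently from the paper's.

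The paper proves the cycle (iv)=(v) $\subset$ (ii)=(iii) $\subset$ (i) $\subset$ (iv), and all of the spectral content sits in the last inclusion. There, archimedean points are lifted along the surjection $\widetilde{X_\fM}^\arch\twoheadrightarrow\widetilde{X_\fF}^\arch$ of \Cref{lem:surjectivity_field_extension}. The strong density theorem (\Cref{thm:strong_density}) over $\fM$, where $\Gamma_\MM=\RR$, then identifies $\trop(\widetilde{X_\fM}^\arch)$ with $\val_\MM(X_\fM)$.

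You instead prove (ii) $\subset$ (v) directly. Each set $\{w\mid\trop_r(f)(1,w)\ge0\}$ is closed and, by \Cref{Cor:WeakFundamentalTrop}, contains $\val(X_\fF)$. So (ii)--(v) coincide without mentioning the spectrum at all. You then obtain (i) $\subset$ (v) pointwise. Each $\fF(\alpha)$ is itself an elementary extension of $\fK$, its valuation $\val_\alpha$ extends $\val_\KK$, and $\trop(\alpha)=\val_\alpha(x(\alpha))$ with $x(\alpha)\in X_{\fF(\alpha)}$. The set (v) depends only on $\pos(X)$, not on the ambient field.

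This bypasses \Cref{lem:surjectivity_field_extension} and \Cref{thm:strong_density} entirely. The only spectral inputs left are the closedness of $\trop(\widetilde{X_\fF}^\arch)$, used for (ii) $\subset$ (i), and the existence of $\val_\alpha$.

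Your worry about checking Assumption~\ref{ass:elementary_extensions} for $\fF(\alpha)$ is unnecessary. You only use the inclusion $\val(Y)\subset\{w\mid\trop_r(f)(1,w)\ge0\}$ for $f\in\pos(Y)$. This holds over any valued real closed field, because $\sval$ is a hyperfield morphism and $f(a)\ge0$ for $a\in Y$.

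What the paper's route buys in return is the stronger structural statement $\trop(\widetilde{X_\fF}^\arch)\subset\trop(\widetilde{X_\fM}^\arch)=\val_\MM(X_\fM)$. In words, every tropical image of an archimedean point over $\fF$ is realized by an honest point of $X_\fM$, through a lift of that spectral point. This mirrors the argument of Jell, Scheiderer and Yu in the semialgebraic case.
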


\begin{proof}
    We first notice that the equality of the sets in (ii), (iii) and in (iv), (v) is a consequence of \Cref{Thm:WeakFundamentalTrop}. We proceed by showing the inclusions (iv) = (v) $\subset$ (ii) = (iii) $\subset$ (i) $\subset$ (iv). 
    
    By Lemma~\ref{Lemma:WeakFundamentalTropOneInclusion}, there exist polynomials $f_1,\dots,f_k \in \mathcal{P}(X)$ such that
    \begin{align*}
 \Trop(X) = \overline{\val(X_\fF)} = \bigcap_{i=1}^k     \{ w \in \RR^n \mid \trop_r(f_i)(1,w) \geq 0\} \supset \bigcap_{f \in \mathcal{P}(X)}    \{ w \in \RR^n \mid \trop_r(f)(1,w) \geq 0\}.
     \end{align*}
    
        We assume in the following that $X$ is locally closed. We show that $\Trop(X)=\overline{\val_\FF(X_\fF)} \subset \trop ( \widetilde{X_\fF}^\arch)$. This is true because
    \[
        \overline{\val(X_\fF)} \subset \overline{\trop(\widetilde{X_\fF}^\arch)} = \trop(\widetilde{X_\fF}^\arch)
    \]
    where the inclusion follows from the embedding $X_\fF \hookrightarrow \widetilde{X_\fF}^\arch$, and the equality from \Cref{lem:closure}.

    We conclude by showing that $\trop ( \widetilde{X_\fF}^\arch) \subset \val_\MM(X_\fM)$. The inclusion of rings $\cC^{\definable}(X_{\fF}) \hookrightarrow \cC^{\definable}(X_{\fM})$ induces a surjective map $\widetilde{X_\fM}^\arch \twoheadrightarrow \widetilde{X_\fF}^\arch$, see \Cref{lem:surjectivity_field_extension}.
    Since this map is surjective and compatible with valuations, we have
    \[
        \trop(\widetilde{X_\fF}^\arch) \subset \trop(\widetilde{X_\fM}^\arch) = \val_{\MM}(X_{\fM})
    \]
    where the equality follows from \Cref{thm:strong_density}.     

    We now remove the assumption that $X$ is locally closed. \Cref{prop:ValImageClosedM} Implies that $\val_\MM(X_\fM) = \val_\MM(\cl{X_\fM}) = \val_\MM(\cl{X}_\fM)$ and $\Trop(X) = \Trop(\cl{X})$. Notice also that $\trop ( \widetilde{X_\fF}^\arch) = \bigcup_\alpha \val_\alpha(X_{\fF(\alpha)})$. Applying again \Cref{prop:ValImageClosedM}, to each $\val_\alpha(X_{\fF(\alpha)})$ we have that $\trop ( \widetilde{X_\fF}^\arch) = \trop ( \widetilde{(\cl{X}_\fF)}^\arch)$. Hence we can always replace $X$ with $\cl{X}$, which is a locally closed definable set, concluding the proof.
\end{proof}

\begin{theorem}[Fundamental Theorem of Fine Tropical O-minimal Geometry]
    \label{thm:fine_fundamental}
    Let $\fM$ be a polynomially bounded o-minimal expansion of a real closed field $\MM$. Assume that $\MM$ has a nontrival order compatible valuation s.t. $\val(\MM^\times) = \RR$ and assume further that the residue field of $\MM$ is $\RR$.
    
    Then for any closed definable subset $X \subset \MM_{>0}^n$, the following sets are equal: 
    \begin{enumerate}
        \item $\ftrop ( \widetilde{X_\fM}^\arch)$;
        \item $\bigcap_{f \in \mathcal{P}(X_\fM)}     \{ z \in \RR_{>0}^n \times \RR^n \mid \trop_{\rv}(f)(z) \geq 0\}$;
        \item $\rv(X_\fM)$;
        \item  $\bigcup_{w \in \RR^n} \initial_w(X_\fM) \times \{ w \}$.
    \end{enumerate}
\end{theorem}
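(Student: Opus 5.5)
The equalities (ii) $=$ (iii) $=$ (iv) come essentially for free from \Cref{Thm:WeakFundamentalFineTrop}, applied with $\FF = \MM$. Its hypotheses hold: $\MM$ is nontrivially valued, and $X = X_\fM$ is closed in $\MM^n_{>0}$. That theorem gives
\[
\rv(X_\fM) = \bigcup_{w\in\Gamma_\MM^n}\initial_w(X_\fM)\times\{w\} = \bigcap_{f\in\mathcal{P}(X_\fM)}\{z\in\RV_{\MM,>0}^n \mid \trop_{\rv}(f)(z)\ge 0\}.
\]
Since $\Gamma_\MM=\RR$ and $k_\MM=\RR$, we have $\RV_{\MM,>0}^n \cong \RR_{>0}^n\times\RR^n$ via the fixed splitting, and this is exactly (ii) $=$ (iii) $=$ (iv). It therefore remains to identify (i) with (iii). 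One inclusion is immediate: every $a\in X_\fM$ gives a principal ultrafilter in $\widetilde{X_\fM}^\arch$, and for it $\ftrop$ returns $\rv(a)$. Here one checks that the embedding $\RV_\MM\hookrightarrow\RV(\alpha)\hookrightarrow\RV_\Omega$ is compatible, as in the commutative diagram preceding the definition of $\ftrop$. This gives (iii) $\subset$ (i).

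For (i) $\subset$ (iii), take $\alpha\in\widetilde{X_\fM}^\arch$ and the point $b\coloneqq(x_1(\alpha),\dots,x_n(\alpha))\in X_{\fM(\alpha)}$. Since $\fM\prec\fM(\alpha)$ is elementary and $\MM(\alpha)$ is archimedean over $\MM$, \Cref{lem:valuation_archimedean} gives $\Gamma(\alpha)=\RR$. Because $\val_\alpha$ has valuation ring equal to the convex hull of $\cO_\MM$, its residue field $k(\alpha)$ is an archimedean extension of $k_\MM=\RR$, hence equal to $\RR$. So we may take $\LL=\RR$, and $\ftrop(\alpha)=\rv_\alpha(b)=(y,w)\in\RR_{>0}^n\times\RR^n$, where $y=\ac_\alpha(b)$ is computed with the splitting $t^w$ of $\MM$. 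Since $w\in\Gamma_\MM^n$, we can twist: $c\coloneqq t^{-w}.b$ lies in $(\cO^\times_{\MM(\alpha),>0})^n$, has residue $y$, and lies in $t^{-w}.X_{\fM(\alpha)}=(t^{-w}.X_\fM)_{\fM(\alpha)}$.

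The key step, and the main obstacle, is to descend this point $c$ to $\MM$. I would do this with the polynomial description rather than with quantifier elimination. Let $f\in\mathcal{P}(X_\fM)$. By elementarity, $f\ge 0$ on $X_{\fM(\alpha)}$, so $f(t^w.c)\ge0$. The argument of \eqref{eq:compatibleorders} works verbatim in the valued field $\MM(\alpha)$, whose residue map and angular component restrict to those of $\MM$, and the initial form $\initial_w(f)$ has coefficients in $k_\MM=\RR$. It gives $\initial_w(f)(y)\ge 0$. By \Cref{Eq:TropPrecludingPosValues}(a), this means $\trop_{\rv}(f)(y,w)\ge0$ for every $f\in\mathcal{P}(X_\fM)$. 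So $(y,w)$ lies in set (ii), which we have already shown equals (iii). Hence (i) $\subset$ (ii) $=$ (iii). The points needing care are that $\ac_\alpha$ and the $\Omega$-embedding agree with the splitting of $\MM$, and that $\initial_w(f)(\res(c))$ is computed correctly when $c$ is not in $\MM$. If that residue bookkeeping fails, the fallback is to apply \Cref{prop:density}/model completeness to the $\cL$-formula asserting that some point of $X$ has given $\rv$ value, which is quantifier-free in the extended language of \cite{driesTConvexityTameExtensions1995}.
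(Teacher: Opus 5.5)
Your proposal is correct and follows essentially the same route as the paper. Both arguments get (ii) $=$ (iii) $=$ (iv) from Theorem~\ref{Thm:WeakFundamentalFineTrop} and (iii) $\subset$ (i) from principal ultrafilters. Both get (i) $\subset$ (ii) by transferring the nonnegativity of each $f\in\mathcal{P}(X_\fM)$ to $X_{\fM(\alpha)}$ and reading it off at $\ftrop(\alpha)$.

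The only difference is presentation. You unroll by hand the inclusion part of Theorem~\ref{Thm:WeakFundamentalFineTrop} over $\MM(\alpha)$, using the residue of $t^{-W}f(t^w.c)$ and Proposition~\ref{Eq:TropPrecludingPosValues}(a). You also prove $\Gamma(\alpha)=\RR$ and $k(\alpha)=\RR$, which the paper uses without comment when it asserts $\trop_{\rv_\alpha}(f)=\trop_{\rv}(f)$. Your quantifier-elimination fallback is not needed.
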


\begin{proof}
    The sets (ii), (iii) and (iv) are equal by \Cref{Thm:WeakFundamentalFineTrop}.
The inclusion (iii) $\subset$ (i) follows directly from the inclusion $X_\fM \subset \widetilde{X_\fM}^\arch$.
To prove that (i) is contained in (ii), let $\alpha \in \widetilde{X_\fM}^\arch$ and consider 
    \begin{align*}
    \rv_\alpha(X_{\fM(\alpha)}) = \bigcap_{f \in \mathcal{P}(X_{\fM(\alpha)})}     \{ z \in \RR_{>0} \times \RR^n \mid \trop_{\rv_\alpha}(f)(z) \geq 0\} \subset \bigcap_{f \in \mathcal{P}(X_{\fM})}     \{ z \in \RR_{>0} \times \RR^n \mid \trop_{\rv}(f)(z) \geq 0\}.
    \end{align*}
    where the inclusion follows from $\mathcal{P}(X_\fM) \subset \mathcal{P}(X_{\fM(\alpha)}) $ and from the fact that $\trop_{\rv_\alpha}(f) = \trop_{\rv}(f)$ for all $f \in \MM[\vb x^\Lambda]$. From~\eqref{Def:Kalpha_Formulas} it follows that $(x_1(\alpha),\dots,x_n(\alpha)) \in X_{\fM(\alpha)}$, which implies that $\ftrop(\alpha)$ lies in (ii).
\end{proof}

\section*{Acknowledgments}
The first author thanks Antongiulio Fornasiero, Noa Lavi, Jana Maříková, and Tamara Servi for their lectures, comments and bibliographical references on model theory and o-minimal geometry, and Erwan Brugallé for his suggestions and explanarions on tropical geometry and on Viro's method.

The first author was partially funded by the Humboldt Research Fellowship for postdoctoral researchers.

This research project began when both authors were hosted at the Max Planck Institute for Mathematics in the Sciences, Leipzig. We would like to thank the institute, and in particular Bernd Sturmfels, for providing us with the opportunity and encouragement to work together. We are also grateful to Marc Burger and Xenia Flamm for inspiring discussions at MPI MiS on the real spectrum compactification of character varieties and ultrafilters, which motivated our exposition of the o-minimal spectrum.

\printbibliography

\end{document}